\newcommand{\sep}{\,|\,} 							
\newcommand{\tom}{\varnothing}					
\newcommand{\NN}{\mathbb{N}}						
\newcommand{\ZZ}{\mathbb{Z}}						
\newcommand{\dbrac}[1]{{\llbracket #1 \rrbracket}} 	
\newcommand{\dash}{{-}}							
\newcommand{\symS}{{\mathfrak S}}					
\newcommand{\symM}{{\mathfrak M}}
\newcommand{\symA}{{\mathfrak A}}
\newcommand{\eqA}[2]{\widetilde{\symA}_{#1}\left(#2\right)}	
\newcommand{\eqAn}[1]{\eqA{n}{#1}}				
\newcommand{\eqM}[2]{\widetilde{\symM}_{#1}\left(#2\right)}	
\newcommand{\eqMn}[1]{\eqM{n}{#1}}				
\newcommand{\Aeq}[2]{\symA_{#1}\left(\widetilde{#2}\right)}		
\newcommand{\Aeqn}[1]{\Aeq{n}{#1}}				
\newcommand{\Meq}[2]{\symM_{#1}\left(\tilde{#2}\right)}		
\newcommand{\Meqn}[1]{\Meq{n}{#1}}				
\newcommand{\bfp}{\mathbf{p}}
\newcommand{\bfq}{\mathbf{q}}
\newcommand{\rma}{\mathrm{a}}	
\newcommand{\rmc}{\mathrm{c}}	
\newcommand{\rmi}{\mathrm{i}}	
\newcommand{\rmr}{\mathrm{r}}	
\DeclareMathOperator{\rank}{rank}	
\newcommand{\ull}[1]{\underline{\underline{#1}}}
\newcommand{\biv}[2]{\substack{\displaystyle{#1} \\ \displaystyle{#2}}}
\def\vin#1{{%
  	\vbox{
    		\hbox{${#1}$}%
		\kern 1pt 
		\hrule height .7pt 
  	}
}}
\newcommand{\vins}[1]{\vin{\scriptstyle #1}}
\def\niv#1{{%
  	\vbox{
		\hrule height .7pt 
    		\kern 1pt 
    		\hbox{${#1}$}%
  	}
}}
\newcommand{\nivs}[1]{\niv{\scriptstyle #1}}
\def\vinl#1{{%
	\vrule width .7pt
  	\vbox{
    		\hbox{\kern0.5pt${#1}$}%
		\kern 1pt 
		\hrule height .7pt
  	}
	\kern -0.5pt
}}
\newcommand{\vinls}[1]{\vinl{\scriptstyle #1}}
\def\nivl#1{{%
  	\vrule width .7pt
  	\vbox{ 
  		\hrule height .7pt 
    		\kern 1pt
    		\hbox{\kern 0.5pt${#1}$}%
  	}
	\kern -0.5pt 
}}
\newcommand{\nivls}[1]{\nivl{\scriptstyle #1}}
\def\vinr#1{{%
  	\vbox{
    		\hbox{${#1}$\kern 0.5pt}%
		\kern 1pt 
		\hrule height .7pt
  	}
	\kern.05pt
	\vrule width .7pt
}}
\newcommand{\vinrs}[1]{\vinr{\scriptstyle #1}}
\def\nivr#1{{%
  	\vbox{
		\hrule height .7pt 
    		\kern 1pt 
    		\hbox{${#1}$\kern 0.5pt}%
  	}
	\kern.05pt 
  	\vrule width .7pt
}}
\newcommand{\nivrs}[1]{\nivr{\scriptstyle #1}}
\def\vinb#1{{%
  	\vbox{
    		\hbox{${#1}$}%
		\kern 1.72pt 
  	}
}}
\newcommand{\vinbs}[1]{\vinb{\scriptstyle #1}}
\newcommand{\nivbs}[1]{{\scriptstyle #1}}
\newcommand{\bl}{{\kern 1.75pt}}
\newcommand{\id}{{\mathrm{id}}}
\theoremstyle{plain} 
   \newtheorem*{named-thm}{}
   \newtheorem*{exercise*}{Exercise}
   \newtheorem{theorem}{Theorem}[section]
   \newtheorem{proposition}[theorem]{Proposition} 
   \newtheorem{lemma}[theorem]{Lemma}
   \newtheorem{corollary}[theorem]{Corollary}
   \newtheorem{example}[theorem]{Example}
   \newtheorem{conjecture}[theorem]{Conjecture}
   \newtheorem{openproblem}{Open Problem}
\theoremstyle{definition}
   \newtheorem{definition}[theorem]{Definition}
   \newtheorem{defn-prop}[theorem]{Skilgreining-Fullyrðing}
\begin{document}

\title[Equivalence classes of permutations avoiding a pattern]{
Equivalence classes of permutations avoiding a pattern}

\author{Henning Arn\'or \'Ulfarsson}

\thanks{This work is supported by grant no.\ 090038011 from the Icelandic Research Fund.}

\address{School of Computer Science, Reykjav\'ik University,
Nauth\'olsv\'ik, Menntavegi 1, 101 Reykjav\'ik, Iceland}

\email{henningu@ru.is}


\begin{abstract}
Given a permutation pattern $\bfp$ and an equivalence relation on permutations, we study the corresponding equivalence classes all of whose members avoid $\bfp$.  Four relations are studied: Conjugacy, order isomorphism, Knuth-equivalence and toric equivalence.  Each of these produces a known class of permutations or a known counting sequence.  For example, involutions correspond to conjugacy, and permutations whose insertion tableau is hook-shaped with $2$ in the first row correspond to Knuth-equivalence.  These permutations are equinumerous with certain congruence classes of graph endomorphisms.  In the case of toric equivalence we find a class of permutations that are counted by the Euler totient function, with a subclass counted by the number-of-divisors function.  We also provide a new symmetry for bivincular patterns that produces some new non-trivial Wilf equivalences
\end{abstract}

\maketitle
\thispagestyle{empty}

\setcounter{tocdepth}{1}
\tableofcontents

\section{Introduction}
Let $\symS_n$ be the permutation group on $n$ letters. We will mostly use
one-line notation for the elements of this group, i.e., the permutation in
$\symS_4$ that sends $1 \mapsto 2$, $2 \mapsto 4$, $3 \mapsto 1$, $4 \mapsto 3$
will be written $2413$. This corresponds to the bottom line in the two-line notation
\[
\biv{1\bl2\bl3\bl4}{\vinb{2\bl4\bl1\bl3}}.
\]

In usual pattern avoidance and matching one studies and counts the
permutations in $\symS_n$ that avoid or match a particular pattern. These
permutations belong to two sets:
\begin{align*}
\symA_n(\bfp) &= \{ \pi \in \symS_n \sep \pi
\text{ avoids } \bfp  \}, \\
\symM_n(\bfp) &= \{ \pi \in \symS_n \sep \pi
\text{ matches } \bfp  \}.
\end{align*}

For example if the chosen pattern is $\bfp = 123$ (an increasing subsequence of three letters)
we have
\begin{align*}
\symA_4(\bfp) = \{ &1432, 2143, 2413, 2431, 3142, 3214, 3241, \\
& 3412, 3421, 4132, 4213, 4231, 4312, 4321  \}, \\
\symM_4(\bfp) = \{ &\ull{123}4, \ull{124}3, \ull{13}2\ull{4}, \ull{134}2, \ull{1}4\ull{23},
\ull{2}1\ull{34}, \ull{234}1, \\
&  \ull{23}1\ull{4}, 3\ull{124}, 4\ull{123} \},
\end{align*}
where we have marked an occurrence of the pattern by underlining the letters. Note that
we do not care if the pattern appears as a subsequence whose entries
are adjacent or not in the permutation. In general the number of permutations
in $\symS_n$ that avoid $123$ is the $n$-th Catalan number, see e.g., \cite{MR829358},
\[
|\symA_n(\bfp)| = C_n = \frac{1}{n+1}\binom{2n}{n}.
\]
The permutations that match (or contain) the same pattern
are the remaining permutations, giving $|\symM_n(\bfp)| = n! - C_n$. This fact
remains true for any pattern from $\symS_3$.

In this paper we will explore how pattern avoidance interacts
with equivalence relations. More precisely, given an equivalence relation
and a pattern we will study and count the equivalence classes that do not
contain any permutations that match the pattern. Similarly we will look at
equivalence classes that only contain permutations that match the pattern.
In short, we will be studying the two sets
\begin{align*}
\eqAn{\bfp} &= \{ \pi \in \symS_n \sep \pi
\text{ and every equivalent permutation avoids } \bfp  \}, \\
\eqMn{\bfp} &= \{ \pi \in \symS_n \sep \pi
\text{ and every equivalent permutation matches } \bfp  \}.
\end{align*}
If the relation can be extended to patterns we will also study
the sets
\begin{align*}
\Aeqn{\bfp} &= \{ \pi \in \symS_n \sep \pi
\text{ avoids } \bfp \text{ and every equivalent pattern} \}, \\
\Meqn{\bfp} &= \{ \pi \in \symS_n \sep \pi
\text{ matches } \bfp \text{ and every equivalent pattern} \}.
\end{align*}

The motivation for this work came about somewhat by accident when
I was studying Knuth-equivalent permutations and occurrences of patterns
in them. Two permutations are \emph{Knuth-equivalent} if they have the same
\emph{insertion tableaux}, see e.g., \cite{MR1464693}. I was trying to see how the
occurrence of the pattern $231$ could be seen on the tableau and because
of a bug in the code I had written I generated permutations whose entire
class avoids the pattern. So I had generated the set $\eqAn{231}$.
It turns out that the permutations in this set correspond to hook-shaped
tableaux, with $1,2,\dots k$ in the first line, such as
\[
\young(1234,5,6)
\]
Now, the equivalence class of the pattern $231$
is $\{231,213\}$ and it is possible to show that a permutation avoids these two patterns
if and only if it has an insertion tableau of the type described above.
So we now have (see Proposition \ref{prop:orig} and Corollary \ref{cor:knuth-S3stable})
\[
\eqAn{231} = \symA_n(231,213) = \symA_n(\widetilde{231}).
\]
A pattern with this property will be called \emph{stable} below. Since the hook-length
formula can be used to count the number of tableaux of a particular shape it
can be used to count the sets above, giving
\[
|\symA_n(231,213)| = 2^{n-1},
\]
which is well-known, and in fact \cite[Proposition 5.3]{MR2061380} used a similar method to prove
it.

Knuth-equivalence is one of four equivalence relations we will look at here,
see the overview below for some examples of the results.

\subsection{Overview and main results}
The question of how patterns interact with equivalence relations seems to
only become interesting when we consider so-called \emph{bivincular patterns},
which generalize and subsume classical patterns. They are defined in
section \ref{sec:gen}, but roughly speaking they are classical patterns with
additional requirements on what positions and values are allowed in the occurrence
of the pattern.

The next section deals with generalities that can be applied to any equivalence
relation. The main result there is Proposition \ref{prop:basic}, which reduces
the number of patterns one needs to look at. For example, let $\pi^\rmr$ be
the permutation $\pi$ read backwards. If for any permutations
$\pi$ and $\mu$ we have $\pi$ is equivalent to $\mu$ if and only if $\pi^\rmr$
is equivalent to $\mu^\rmr$,
then the proposition shows that $|\eqAn{\bfp}| = |\eqAn{\bfp^\rmr}|$,
$|\eqMn{\bfp}| = |\eqMn{\bfp^\rmr}|$. This means that if we are interested in
counting then there are half as many patterns to look at.

The next four sections deal with a particular equivalence relation each.
The ordering of the sections is based on the difficulty of the proofs.

In section \ref{sec:Conjugacy} we regard two permutations as equivalent
if they have the same cycle type, or equivalently, $\pi \sim \mu$ if
there exists a permutation $\sigma$ such that $\pi = \sigma \mu \sigma^\rmi$.
A particular result from that section is that permutations whose entire class avoids
the bivincular pattern $\biv{\nivls{1\bl2\bl3}}{\vinls{2\bl3}\bl\vinbs{1}}$
are the involutions. (Recall that bivincular patters are defined
in section \ref{sec:gen} below.)

In section \ref{sec:Order} we regard two permutations as equivalent if
they have the same order so the equivalence classes here will be unions
of conjugacy classes. The results in this section will rely on results from
the previous one. Here's an example: permutations whose entire class avoids
$\biv{ \nivls{1\bl2\bl3\bl\dotsm\bl k} }{ \vinls{2\bl3\bl\dotsm\bl k\bl1} }$
are permutations with order that can not be written $k \cdot m$ with $k \nmid m$.

In section \ref{sec:Knuth} we regard two permutations as equivalent if
they are Knuth-equivalent, meaning that they have the same insertion tableau.
For another characterization of this equivalence relation, see Definition
\ref{def:Knuth-eq}. The result mentioned in the introduction is from this section.
Another is that
permutations whose entire class avoids $\biv{\nivls{1}\bl\nivbs{2\bl3}}{\vinbs{2\bl3\bl1}}$
are permutations whose insertion tableau is hook-shaped and has $2$ in the first row.
These permutations are counted with $1 + \frac{1}{2}\binom{2n-2}{n-1}$ and are
equinumerous with certain congruence classes of graph endomorphisms, see
Proposition \ref{prop:Graphs} and Open Problem \ref{openp:Graphs}.
Finally, in Corollary \ref{cor:knuth-S3stable}, we show that every classical pattern
from $\symS_3$ is stable.

In section \ref{sec:Toric} we regard two permutations as equivalent if they
are in the same \emph{toric class}, which is defined rigorously below, but
very roughly speaking this
means that two permutations are equivalent if their permutation matrices
become the same when they are wrapped around a torus. There are three
main results in this section. The first is Proposition \ref{prop:newsym},
which provides a new symmetry
relation for certain bivincular patterns, which produces some new non-trivial
Wilf-equivalences. Then there are two related results,
the first showing that the permutations whose entire class
avoids $\biv{\nivs{1\bl2}\bl\nivrs{3}}{\vinbs{2\bl1\bl3}}$ have a rich
structure and are enumerated by the Euler totient function, $\phi(n+1)$, see
Theorem \ref{thm:totient}; and that the subset of permutations whose entire class avoids
$\biv{\nivs{1\bl2}\bl\nivbs{3}}{\vinbs{2\bl1\bl3}}$
are equinumerous with the divisors of $n$, see Theorem
\ref{thm:divisors}. These two results allow us to state a conjecture that is
equivalent to the Riemann Hypothesis, see Conjecture \ref{conj:eqRH}.

In the last section, \ref{sec:other}, we mention some other equivalence
relations we have considered.

\section{Generalities} \label{sec:gen}

Below we will recall the definition of bivincular patterns, but first recall
that given a pattern $\bfp$ we say that it \emph{occurs} in a permutation
$\pi$, or that $\pi$ \emph{matches} $\bfp$, if $\pi$ contains a subsequence
whose letters are in the same order as the letters in the pattern. If a permutation
does not match a pattern we say that it \emph{avoids} the pattern.
\begin{example}
The permutation $\pi = 241635$ has four occurrences of the pattern $123$,
given by the subsequences $246$, $245$, $135$ and $235$. It has two occurrences
of the pattern $231$, given by the subsequences $241$ and $463$. It avoids
the pattern $321$.
\end{example}

A \emph{vincular} pattern is like a classical pattern but it can have requirements
on which letters must be adjacent in the occurrence. More precisely if two adjacent
letters in the pattern are underlined then the corresponding letters in the occurrence
must be adjacent. These were introduced in full generality by \cite{MR1758852},
although special cases had been considered earlier.
\begin{example}
The permutation $\pi = 241635$ has two occurrences of the pattern $\vinb{1}\bl\vin{2\bl3}$,
given by the subsequences $135$ and $235$. These are also occurrences of the pattern
$\vinb{1}\bl\vinr{2\bl3}$ since they appear at the end of the permutation.
It avoids $\vin{1\bl2\bl3}$.
\end{example}

A \emph{bivincular} pattern is a further generalization where requirements on
the values that must be in the occurrence are allowed. Here we use two-line notation
and bars on adjacent letters in the top line mean that the corresponding letters in the
occurrence must have adjacent values. These were introduced by \cite{BCDK08}.

\begin{example}
The permutation $\pi = 241635$ has one occurrence of the pattern
$\biv{\nivs{1\bl2}\bl\nivbs{3}}{\vinbs{1\bl2\bl3} }$,
given by the subsequence $235$. This is also an occurrence of
$\biv{\nivs{1\bl2}\bl\nivrs{3} }{\vinbs{1}\bl\vinrs{2\bl3} }$, with the hook in
the top line signifying that the largest letter in the permutation is in the occurrence.
It has one occurrence
of $\biv{\nivbs{1}\bl\nivs{2\bl3} }{\vinbs{1\bl2\bl3} }$ given by the subsequence
$245$, which is also an occurrence of
$\biv{\nivbs{1}\bl\nivrs{2\bl3} }{\vinls{1\bl2}\bl\vinbs{3} }$.
It avoids $\biv{\nivs{1\bl2\bl3} }{\vinbs{1\bl2\bl3} }$.
\end{example}

We will also use the notation of \cite{BCDK08} to write bivincular patterns:
A bivincular pattern consists of a triple $(p,X,Y)$ where $p$ is a permutation
in $\symS_k$ and $X,Y$ are subsets of $\dbrac{0,k}$. An occurrence of this bivincular
pattern in a permutation $\pi = \pi_1\dotsm\pi_n$ in $\symS_n$ is a subsequence
$\pi_{i_1}\dotsm\pi_{i_k}$ such that the letters in the subsequence are in
the same relative order as the letters of $p$ and
\begin{itemize}

 \item for all $x$ in $X$, $i_{x+1} = i_x + 1$; and
 
 \item for all $y$ in $Y$, $j_{y+1} = j_y + 1$, where
 $\{ \pi_{i_1}, \dotsc, \pi_{i_k} \} = \{ j_1, \dotsc, j_k \}$ and
 $j_1 < j_2 < \dotsm < j_k$.
 
\end{itemize}
By convention we put $i_0 = 0 = j_0$ and $i_{k+1} = n+1 = j_{k+1}$.

The bivincular patterns behave well with respect to the operations 
reverse, complement and inverse: Given a bivincular pattern $(p,X,Y)$
we define
\begin{align*}
	(p,X,Y)^\text{r} &= (p^\text{r},k-X,Y),\\
	(p,X,Y)^\text{c} &= (p^\text{c},X,k-Y),\\
	(p,X,Y)^\text{i} &= (p^\text{i},Y,X),
\end{align*}
where $p^\text{r}$ is the usual reverse of the permutation of $p$,
$p^\text{c}$ is the usual complement of the permutation of $p$, and
$p^\text{i}$ is the usual inverse of the permutation of $p$. Here
$k-M = \{ k-m \sep m \in M\}$.

We get a very simple but useful Lemma:
\begin{lemma}
 Let \textup{a} denote one of the operations above (or their compositions).
 Then a permutation $\pi$ avoids the bivincular pattern $\bfp$ if and only
 if the permutation $\pi^\textup{a}$ avoids the bivincular pattern
 $\bfp^\textup{a}$.\qed
\end{lemma}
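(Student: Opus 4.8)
The plan is to prove the equivalence separately for each of the three generating operations reverse, complement and inverse, and then deduce the case of an arbitrary composition. Since each generator is an involution and they generate the symmetry group acting on bivincular patterns, it suffices to treat the generators: once I know that $\pi$ avoids $\bfp$ if and only if $\pi^\rmr$ avoids $\bfp^\rmr$, and likewise for $\rmc$ and $\rmi$, then for a composition $\mathrm{a} = \mathrm{a}_1 \cdots \mathrm{a}_m$ I can chain the equivalences one operation at a time, using at each stage that the partially applied operation sends $\bfp$ and $\pi$ to the expected partially transformed pattern and permutation.

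For each generator I would exhibit an explicit bijection between occurrences of $\bfp = (p,X,Y)$ in $\pi \in \symS_n$ and occurrences of $\bfp^{\mathrm{a}}$ in $\pi^{\mathrm{a}}$. An occurrence is a choice of positions $i_1 < \cdots < i_k$ together with the induced values $j_1 < \cdots < j_k$, subject to the order condition, the position-adjacency condition indexed by $X$, and the value-adjacency condition indexed by $Y$, with the boundary conventions $i_0 = j_0 = 0$ and $i_{k+1} = j_{k+1} = n+1$. For reverse, the map $i \mapsto n+1-i$ sends an occurrence at positions $i_1 < \cdots < i_k$ to the occurrence at positions $n+1-i_k < \cdots < n+1-i_1$ in $\pi^\rmr$: the relative order is reversed, producing $p^\rmr$; the values are unchanged, so the condition indexed by $Y$ is preserved verbatim; and a position-adjacency $i_{x+1} = i_x + 1$ becomes an adjacency of the entries now indexed $k-x$ and $k+1-x$, so $X$ transforms to $k-X$, matching $(p,X,Y)^\rmr = (p^\rmr, k-X, Y)$.

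For complement, $\pi^\rmc_i = n+1-\pi_i$ fixes positions and reverses values, so $X$ is untouched, the induced order becomes $p^\rmc$, and the value-adjacency set $Y$ transforms to $k-Y$ by the same index reversal as above. For inverse, passing to $\pi^\rmi$ literally interchanges the roles of positions and values: an occurrence at positions $i_1 < \cdots < i_k$ with values $j_1 < \cdots < j_k$ becomes an occurrence at positions $j_1 < \cdots < j_k$ with values $i_1 < \cdots < i_k$, so the order becomes $p^\rmi$ and the sets $X$ and $Y$ are swapped, matching $(p,X,Y)^\rmi = (p^\rmi, Y, X)$. In each case the map is its own inverse, so it is a genuine bijection on occurrences, and avoidance (the absence of any occurrence) transports accordingly.

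The only delicate point, and the place I would be most careful, is the bookkeeping of the boundary conventions and the precise reindexing of $X$ and $Y$ under reverse and complement. One must check, for instance, that $0 \in X$ (the pattern begins in the first position) corresponds correctly to $k \in k-X$ (it ends in the last position) under reverse, and symmetrically for the value boundaries under complement, so that the conditions involving $i_0, i_{k+1}, j_0, j_{k+1}$ are transported consistently. This amounts to verifying that the arithmetic $x \mapsto k-x$ on $\dbrac{0,k}$ respects the conventions; there is no conceptual obstacle here, only the risk of an off-by-one error.
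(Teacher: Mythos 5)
Your proof is correct: the paper states this lemma with no proof at all (note the \verb|\qed| attached to the statement), treating it as immediate from the definitions, and your argument is precisely the routine verification being omitted. Your reduction to the generators $\rmr,\rmc,\rmi$, the occurrence-by-occurrence bijections, and the boundary bookkeeping (e.g., $0\in X$ mapping to $k\in k-X$ under reverse, consistently with the conventions $i_0=0$, $i_{k+1}=n+1$) all check out.
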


The following proposition will allow us to reduce the number of patterns that
we need to look at.

\begin{proposition} \label{prop:basic}
Below $\rma$ is one of $\rmr, \rmc, \rmi$, or a composition of them.
\begin{enumerate}

\item \label{prop:basic-1}
Assume that for any permutations $\pi, \mu$,
\[
\pi^\rma \sim \mu^\rma \text{ implies } \pi \sim \mu.
\]
Then the maps $\rma \colon \eqAn{\bfp} \to \eqAn{\bfp^\rma}$,
$\rma \colon \eqMn{\bfp} \to \eqMn{\bfp^\rma}$
are injections.

\item Assume that for any permutations $\pi, \mu$,
\[
\pi \sim \mu \text{ implies } \pi^\rma \sim \mu^\rma.
\]
Then the maps $\rma^{-1} \colon \eqAn{\bfp^\rma} \to \eqAn{\bfp}$,
$\rma^{-1} \colon \eqMn{\bfp^\rma} \to \eqMn{\bfp}$
are injections.

\item \label{prop:basic-3}
Assume that for any permutations $\pi, \mu$,
\[
\pi \sim \mu \text{ if and only } \pi^\rma \sim \mu^\rma.
\]
Then the maps $\rma \colon \eqAn{\bfp} \to \eqAn{\bfp^\rma}$,
$\rma \colon \eqMn{\bfp} \to \eqMn{\bfp^\rma}$
are bijections.

\end{enumerate}
\end{proposition}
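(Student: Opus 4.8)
The plan is to use two simple ingredients. First, each operation $\rma$ is an invertible transformation of $\symS_n$: reverse, complement and inverse are involutions, and their compositions form a group, so $\rma^{-1}$ is again one of the listed operations. Second, the preceding Lemma says that $\rma$ turns avoidance (resp.\ matching) of $\bfp$ into avoidance (resp.\ matching) of $\bfp^\rma$. Since $\rma$ is a bijection of the set $\symS_n$, every restriction of it is automatically injective; hence in each part the only thing to check is that the map is \emph{well defined}, i.e.\ that it carries the source set into the stated target set.

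For the first statement I would take $\pi \in \eqAn{\bfp}$ and verify $\pi^\rma \in \eqAn{\bfp^\rma}$. Let $\nu$ be any permutation equivalent to $\pi^\rma$. As $\rma$ is invertible I may write $\nu = \mu^\rma$ with $\mu = \nu^{\rma^{-1}}$, so that $\mu^\rma \sim \pi^\rma$; the hypothesis then gives $\mu \sim \pi$. Because $\pi \in \eqAn{\bfp}$, the permutation $\mu$ avoids $\bfp$, and the Lemma (applied to $\mu$) shows $\nu = \mu^\rma$ avoids $\bfp^\rma$. Thus every permutation equivalent to $\pi^\rma$ avoids $\bfp^\rma$, which is exactly $\pi^\rma \in \eqAn{\bfp^\rma}$. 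The argument for $\eqMn$ is word-for-word the same with ``avoids'' replaced by ``matches''.

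For the second statement the argument runs in the opposite direction. Take $\sigma \in \eqAn{\bfp^\rma}$ and set $\tau = \sigma^{\rma^{-1}}$, so that $\tau^\rma = \sigma$. For any $\rho \sim \tau$ the hypothesis gives $\rho^\rma \sim \tau^\rma = \sigma$, and since $\sigma \in \eqAn{\bfp^\rma}$ the permutation $\rho^\rma$ avoids $\bfp^\rma$; by the Lemma (applied to $\rho$) this is the same as $\rho$ avoiding $\bfp$. Hence $\tau \in \eqAn{\bfp}$, and once more the matching version is identical. For the third statement the assumption is the conjunction of the previous two, so both $\rma \colon \eqAn{\bfp} \to \eqAn{\bfp^\rma}$ and $\rma^{-1} \colon \eqAn{\bfp^\rma} \to \eqAn{\bfp}$ are well-defined injections; since $\rma \circ \rma^{-1}$ and $\rma^{-1} \circ \rma$ are the identity on all of $\symS_n$ these restrictions are mutually inverse, and each is therefore a bijection (and likewise for $\eqMn$).

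The only step that is not purely formal is the well-definedness, and within it the point to watch is the direction in which the equivalence hypothesis is invoked: for the first statement one must \emph{pull back} a permutation equivalent to the image $\pi^\rma$ to one equivalent to $\pi$, which is precisely what ``$\pi^\rma \sim \mu^\rma \Rightarrow \pi \sim \mu$'' supplies, whereas for the second one pushes forward. Once the correct implication is used, everything else follows formally from the bijectivity of $\rma$ together with the Lemma.
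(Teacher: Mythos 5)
Your proof is correct and takes essentially the same approach as the paper's: injectivity is immediate from $\rma$ being a bijection of $\symS_n$, and the only real content is well-definedness, which both you and the paper establish by transporting a permutation equivalent to the image back through $\rma^{-1}$ (using the directional hypothesis) and invoking the avoidance lemma. The paper proves only part (1), by contradiction, and declares the other cases similar; you argue directly and write out parts (2) and (3) explicitly, which is a purely expository difference.
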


\begin{proof}
It suffices to prove (\ref{prop:basic-1}) for the avoiding classes,
since the other cases are similar.
We first prove that the image of the map $\rma$ is actually in $\eqAn{\bfp^\rma}$.
Take $\pi$ in $\eqAn{\bfp}$, so $\pi$ and all equivalent permutations
avoid the pattern $\bfp$. We must show that $\pi^\rma$ and all equivalent
permutations avoid $\bfp^\rma$. But if $\rho$ is equivalent to $\pi^\rma$
and contains $\bfp^\rma$ then $\rho^{\rma^{-1}}$ is equivalent to
$(\pi^\rma)^{\rma^{-1}} = \pi$ and contains $(\bfp^\rma)^{\rma^{-1}} = \bfp$, which is a
contradiction.
Since the map $\rma$ is a bijection (on $\symS_n$) it follows that it's restriction
to $\eqAn{\bfp}$ is an injection.
\end{proof}

Often part (\ref{prop:basic-3}) will be implied by using:

\begin{lemma}
Assume that for any permutation $\pi$ we have $\pi \sim \pi^\rma$.
Then $\pi \sim \mu$ if and only $\pi^\rma \sim \mu^\rma$ and
the maps $\rma \colon \eqAn{\bfp} \to \eqAn{\bfp^\rma}$,
$\rma \colon \eqMn{\bfp} \to \eqMn{\bfp^\rma}$
are bijections.
\end{lemma}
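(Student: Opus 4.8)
The plan is to first establish the biconditional $\pi \sim \mu \iff \pi^\rma \sim \mu^\rma$ from the hypothesis, and then simply invoke part (\ref{prop:basic-3}) of Proposition \ref{prop:basic} to obtain the two bijections. Since $\sim$ is an equivalence relation, the only tools I expect to need are reflexivity, symmetry and transitivity, together with the standing assumption $\pi \sim \pi^\rma$, which by hypothesis holds for \emph{every} permutation and hence applies to $\mu$ just as well as to $\pi$.

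For the forward implication I would assume $\pi \sim \mu$ and build the chain
\[
\pi^\rma \sim \pi \sim \mu \sim \mu^\rma,
\]
where the outer two relations come from applying the hypothesis to $\pi$ and to $\mu$ (using symmetry for the leftmost one) and the middle relation is the assumption; transitivity then yields $\pi^\rma \sim \mu^\rma$.

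For the reverse implication I would run the mirror-image argument: assuming $\pi^\rma \sim \mu^\rma$, the chain
\[
\pi \sim \pi^\rma \sim \mu^\rma \sim \mu
\]
gives $\pi \sim \mu$. This proves that $\pi \sim \mu$ if and only if $\pi^\rma \sim \mu^\rma$, which is exactly the standing assumption of part (\ref{prop:basic-3}) of Proposition \ref{prop:basic}; applying that part with the same operation $\rma$ delivers the bijections $\rma \colon \eqAn{\bfp} \to \eqAn{\bfp^\rma}$ and $\rma \colon \eqMn{\bfp} \to \eqMn{\bfp^\rma}$.

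I do not anticipate a genuine obstacle here: the entire argument is formal and uses nothing about $\rma$ beyond the hypothesis $\pi \sim \pi^\rma$ and the fact---already exploited in Proposition \ref{prop:basic}---that $\rma$ is a bijection on $\symS_n$. In particular I would not need $\rma$ to be an involution, which is worth noting since compositions of $\rmr, \rmc, \rmi$ need not be. The one point to double-check is that the hypothesis is genuinely invoked at both arguments $\pi$ and $\mu$; this is precisely what upgrades the one-sided implications of parts (1) and (2) of Proposition \ref{prop:basic} to the equivalence required by part (\ref{prop:basic-3}).
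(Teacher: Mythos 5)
Your proposal is correct and is essentially the paper's own argument: the paper states this lemma without any written proof, treating it as immediate, and the intended justification is exactly what you give---the chains $\pi^\rma \sim \pi \sim \mu \sim \mu^\rma$ and $\pi \sim \pi^\rma \sim \mu^\rma \sim \mu$ (applying the hypothesis at both $\pi$ and $\mu$, with symmetry and transitivity) establish the biconditional, after which part (\ref{prop:basic-3}) of Proposition \ref{prop:basic} delivers the two bijections. Your closing observation that nothing requires $\rma$ to be an involution, only a bijection on $\symS_n$ as already used in Proposition \ref{prop:basic}, is accurate and worth noting, since compositions of $\rmr, \rmc, \rmi$ can indeed have order greater than two.
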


The proposition above is mostly useful when one is looking for patterns
$\bfp$ that give interesting sets $\eqAn{\bfp}$. If for example we were
interested in toric equivalence and decide to look at all bivincular patterns
of length $3$, without referring the proposition, we would have to go through the entire list of $1536$ bivincular
patterns. By Theorem \ref{thm:toricsymmetry} we can reduce by all the basic
symmetries, which leaves us with only $212$ patterns to look at, making the
problem much more tractable.

For the rest of the section we assume that the equivalence relation has been extended to patterns.

\begin{lemma}
\begin{enumerate}

\item \label{prop:sub-stab1}
Let $\bfp$ be a pattern and assume that for any $\bfq \sim \bfp$ we have
\[
\eqAn{\bfp} \subseteq \eqAn{\bfq}.
\]
Then $\eqAn{\bfp} \subseteq \Aeqn{\bfp}$.

\item Let $\bfp$ be a pattern and assume that for any $\bfq \sim \bfp$ we have
\[
\eqMn{\bfp} \subseteq \eqMn{\bfq}.
\]
Then $\eqMn{\bfp} \subseteq \Meqn{\bfp}$.

\end{enumerate}

\end{lemma}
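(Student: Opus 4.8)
The plan is to unwind the two notations and observe that the only content of the statement is the reflexivity of $\sim$. Recall that $\pi \in \eqAn{\bfp}$ means that $\pi$ together with every permutation equivalent to it avoids $\bfp$, whereas $\pi \in \Aeqn{\bfp}$ means that $\pi$ itself avoids $\bfp$ and every pattern equivalent to $\bfp$. Part (\ref{prop:sub-stab1}) therefore asks us to trade a quantifier over equivalent \emph{permutations} for a quantifier over equivalent \emph{patterns}, and the bridge between the two is that $\pi$ is one of the permutations equivalent to itself.

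First I would fix $\pi \in \eqAn{\bfp}$ and an arbitrary pattern $\bfq \sim \bfp$. The hypothesis $\eqAn{\bfp} \subseteq \eqAn{\bfq}$ gives $\pi \in \eqAn{\bfq}$, that is, $\pi$ and all permutations equivalent to it avoid $\bfq$. The key step is to apply this defining condition to the single equivalent permutation $\pi$ itself, using $\pi \sim \pi$: this forces $\pi$ to avoid $\bfq$. Since $\bfq$ was an arbitrary pattern equivalent to $\bfp$, and $\bfp \sim \bfp$ is among these, I conclude that $\pi$ avoids $\bfp$ and every equivalent pattern, which is precisely the defining condition for $\pi \in \Aeqn{\bfp}$. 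As $\pi \in \eqAn{\bfp}$ was arbitrary, this yields the inclusion.

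Part (2) is dual: I would fix $\pi \in \eqMn{\bfp}$, use $\eqMn{\bfp} \subseteq \eqMn{\bfq}$ to obtain $\pi \in \eqMn{\bfq}$ for each $\bfq \sim \bfp$, and then specialize the ``every equivalent permutation matches $\bfq$'' condition to $\pi$ itself to conclude that $\pi$ matches $\bfq$; ranging over all $\bfq \sim \bfp$ gives $\pi \in \Meqn{\bfp}$. There is no genuine obstacle in either part; the argument is pure definition-chasing, and the one point worth stating explicitly is the trivial-but-essential remark that reflexivity of $\sim$ lets the quantifier ``every equivalent permutation'' be instantiated at $\pi$ alone, collapsing the set-level avoidance (respectively matching) condition to the corresponding condition on $\pi$.
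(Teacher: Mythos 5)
Your proof is correct and is exactly the definition-chasing argument the paper intends -- the paper's own proof consists of the single sentence ``This is obvious.'' One minor simplification: the definition of $\eqAn{\bfq}$ already states explicitly that $\pi$ itself avoids $\bfq$ (the set is ``$\pi$ \emph{and} every equivalent permutation avoids $\bfq$''), so you need not even invoke reflexivity of $\sim$ to instantiate the quantifier at $\pi$; the condition on $\pi$ is built into the definition.
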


\begin{proof}
This is obvious.
\end{proof}

\begin{proposition} \label{prop:stable-patterns}
Assume that for any permutations $\pi, \mu$,
\[
\pi \sim \mu \text{ if and only } \pi^\rma \sim \mu^\rma,
\]
and that the same is still true if $\pi, \mu$ are replaced with patterns.
Then the pattern $\bfp$ is stable if and only if $\bfp^\rma$ is stable.
\end{proposition}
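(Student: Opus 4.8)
The plan is to reduce stability to a single set equality and then transport it across the bijection induced by $\rma$. Recall that $\bfp$ is stable exactly when $\eqAn{\bfp} = \Aeqn{\bfp}$ for every $n$. Since the permutation operation $\rma$ is a bijection of $\symS_n$, it suffices to exhibit two restrictions of that one global bijection, namely $\rma \colon \eqAn{\bfp} \to \eqAn{\bfp^\rma}$ and $\rma \colon \Aeqn{\bfp} \to \Aeqn{\bfp^\rma}$, i.e.\ to show $\rma(\eqAn{\bfp}) = \eqAn{\bfp^\rma}$ and $\rma(\Aeqn{\bfp}) = \Aeqn{\bfp^\rma}$ as subsets of $\symS_n$. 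Once both are in hand, injectivity of $\rma$ on $\symS_n$ yields at one stroke
\[
\eqAn{\bfp} = \Aeqn{\bfp} \iff \rma(\eqAn{\bfp}) = \rma(\Aeqn{\bfp}) \iff \eqAn{\bfp^\rma} = \Aeqn{\bfp^\rma},
\]
which is precisely the asserted equivalence of stability of $\bfp$ and of $\bfp^\rma$. The first restriction is free: under the hypothesis $\pi \sim \mu \iff \pi^\rma \sim \mu^\rma$, part (\ref{prop:basic-3}) of Proposition \ref{prop:basic} already gives that $\rma$ is a bijection $\eqAn{\bfp} \to \eqAn{\bfp^\rma}$.

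The real content, and the step on which I would spend the effort, is the second restriction. I would take $\pi \in \Aeqn{\bfp}$, so $\pi$ avoids every pattern $\bfq \sim \bfp$, and check $\pi^\rma \in \Aeqn{\bfp^\rma}$, i.e.\ that $\pi^\rma$ avoids every pattern $\bfq \sim \bfp^\rma$. Given such a $\bfq$, consider $\bfq^{\rma^{-1}}$, which satisfies $(\bfq^{\rma^{-1}})^\rma = \bfq$. Applying the pattern version of the hypothesis in the form $\bfq^{\rma^{-1}} \sim \bfp \iff (\bfq^{\rma^{-1}})^\rma \sim \bfp^\rma$ turns the known relation $\bfq \sim \bfp^\rma$ into $\bfq^{\rma^{-1}} \sim \bfp$; hence $\pi$ avoids $\bfq^{\rma^{-1}}$, and the avoidance lemma (a permutation avoids a bivincular pattern iff its image under $\rma$ avoids the pattern's image under $\rma$) yields that $\pi^\rma$ avoids $(\bfq^{\rma^{-1}})^\rma = \bfq$. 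As $\bfq \sim \bfp^\rma$ was arbitrary, $\pi^\rma \in \Aeqn{\bfp^\rma}$, so $\rma(\Aeqn{\bfp}) \subseteq \Aeqn{\bfp^\rma}$. Running the symmetric argument with $\rma^{-1}$ in place of $\rma$ and $\bfp^\rma$ in place of $\bfp$ gives $\rma^{-1}(\Aeqn{\bfp^\rma}) \subseteq \Aeqn{\bfp}$, and since $\rma$ and $\rma^{-1}$ are mutually inverse bijections of $\symS_n$ the two containments combine to the desired equality $\rma(\Aeqn{\bfp}) = \Aeqn{\bfp^\rma}$.

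The only point needing care is the bookkeeping with $\rma^{-1}$: because $\rma$ need not be an involution when it is a composition of $\rmr, \rmc, \rmi$, I must consistently use $(\bfp^\rma)^{\rma^{-1}} = \bfp$, and I must note that the pattern-level hypothesis for $\rma$ automatically supplies the same hypothesis for $\rma^{-1}$ (substitute $\bfp \mapsto \bfp^{\rma^{-1}}$, $\mu \mapsto \mu^{\rma^{-1}}$ in the biconditional). With that observation recorded, the symmetric argument is legitimate and nothing beyond the avoidance lemma and Proposition \ref{prop:basic} is used, so the remainder is routine.
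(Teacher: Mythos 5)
Your proof is correct and is essentially the same argument as the paper's, whose entire proof is the composition $\eqAn{\bfp^\rma} \to \eqAn{\bfp} = \Aeqn{\bfp} \to \Aeqn{\bfp^\rma}$ --- precisely your transport of the equality $\eqAn{\bfp} = \Aeqn{\bfp}$ across the bijection $\rma$, with Proposition \ref{prop:basic}(\ref{prop:basic-3}) handling the $\eqAn$ sets. You merely write out in full what the paper leaves implicit, namely the verification that $\rma(\Aeqn{\bfp}) = \Aeqn{\bfp^\rma}$ via the pattern-level hypothesis and the avoidance lemma, and the reverse implication (the paper only states the direction assuming $\bfp$ is stable, tacitly appealing to the same symmetry in $\rma^{-1}$ that you record explicitly).
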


\begin{proof}
We assume $\bfp$ is stable. The statement follows from following the composition
below from left to right.
\[
\eqAn{\bfp^\rma} \to \eqAn{\bfp} = \Aeqn{\bfp} \to \widetilde{\symA}_n(\widetilde{\bfp}^\rma)
= \Aeqn{\bfp^\rma}.\qedhere
\]
\end{proof}

Whenever we reference integer sequences of the form Axxxxxx we are always
referring to the Online Encyclopedia of Integer Sequences, \cite{S10}.

\section{Conjugacy} \label{sec:Conjugacy}

Here we will regard two permutations as equivalent
if they have the same cycle type, or equivalently, $\pi \sim \mu$ if
there exists a permutation $\sigma$ such that $\pi = \sigma \mu \sigma^\rmi$.
We start by examining which symmetries behave nicely with respect to this
equivalence relation.

\subsection{Symmetry}

Since $\pi$ and $\pi^\rmi$ have the same cycle type (write
$\pi$ as a product of disjoint cycles, then obtain $\pi^\rmi$ by reversing each cycle),
we see that conjugacy is compatible with inverses.

It is also easy to see that $\pi$ and $\pi^{\rmr\rmc}$ have the same cycle type.
This can be seen by representing permutations as matrices.

\begin{example}
Let $\pi = 948167523$. Then the \emph{permutation matrix} of $\pi$ is
\[
M_\pi = \left(\begin{array}{ccccccccc}0 & 0 & 0 & 0 & 0 & 0 & 0 & 0 & 1 \\0 & 0 & 0 & 1 & 0 & 0 & 0 & 0 & 0 \\0 & 0 & 0 & 0 & 0 & 0 & 0 & 1 & 0 \\1 & 0 & 0 & 0 & 0 & 0 & 0 & 0 & 0 \\0 & 0 & 0 & 0 & 0 & 1 & 0 & 0 & 0 \\0 & 0 & 0 & 0 & 0 & 0 & 1 & 0 & 0 \\0 & 0 & 0 & 0 & 1 & 0 & 0 & 0 & 0 \\0 & 1 & 0 & 0 & 0 & 0 & 0 & 0 & 0 \\0 & 0 & 1 & 0 & 0 & 0 & 0 & 0 & 0\end{array}\right).
\]
To write $\pi$ as a product of disjoint cycles we start in the first row and
use the position of the $1$ in that row to tell us which row to visit next. Stop when
we loop around. This gives us
\[
\pi = (193824)(567).
\]
When we apply reverse and then complement it is like reading the matrix from the bottom up,
from right to left. We see that we will get the same cycle structure, with $i$ replaced
by $n+1-i$.
\end{example}


By Proposition \ref{prop:basic} we get the following theorem.

\begin{theorem} \label{thm:cycletypesymmetry}
Let $\rma$ be a composition of elements from the set
$\{\rmi, \rmr\rmc\}$. Then the map
$\rma: \eqAn{\bfp} \to \eqAn{\bfp^\rma}$ is a bijection for all $n$.
\end{theorem}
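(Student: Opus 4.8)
The plan is to deduce the theorem directly from Proposition \ref{prop:basic}\,(\ref{prop:basic-3}), whose hypothesis for an operation $\rma$ is that $\pi \sim \mu$ holds if and only if $\pi^\rma \sim \mu^\rma$. By the (unnamed) Lemma immediately following that proposition, this hypothesis is in turn implied by the simpler condition that $\pi \sim \pi^\rma$ for every permutation $\pi$. So the entire argument reduces to checking, for each composition $\rma$ of the generators $\rmi$ and $\rmr\rmc$, that a permutation and its image under $\rma$ always have the same cycle type.

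First I would record the two base cases, both already established in the preceding subsection: $\pi \sim \pi^\rmi$, since inverting reverses each disjoint cycle and hence preserves the cycle type; and $\pi \sim \pi^{\rmr\rmc}$, which is visible on the permutation matrix, where reverse-complement amounts to reading the matrix from the bottom up and from right to left and therefore preserves the cycle structure (with each $i$ replaced by $n+1-i$).

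Next I would propagate these base cases to an arbitrary composition by induction on its length, using that conjugacy is transitive. Writing $\rma$ as a single generator $\rmc$ applied after a shorter composition $\rmb$ for which $\pi \sim \pi^\rmb$ is already known, I get $\pi \sim \pi^\rmb$ and, applying the relevant base case to the permutation $\pi^\rmb$, also $\pi^\rmb \sim (\pi^\rmb)^\rmc = \pi^\rma$; transitivity then yields $\pi \sim \pi^\rma$. (The precise convention for composition order is immaterial, since $\sim$ is an equivalence relation and only the chain of relations matters.) Equivalently, the set of operations satisfying ``$\pi \sim \pi^\rma$ for all $\pi$'' is closed under composition and contains both generators, hence contains every word in them.

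With $\pi \sim \pi^\rma$ verified for all such $\rma$, the Lemma upgrades this to the equivalence $\pi \sim \mu \iff \pi^\rma \sim \mu^\rma$, and Proposition \ref{prop:basic}\,(\ref{prop:basic-3}) then gives that $\rma \colon \eqAn{\bfp} \to \eqAn{\bfp^\rma}$ is a bijection for every $n$ and every bivincular pattern $\bfp$. There is no serious obstacle here: the only point needing any care is the closure-under-composition step, and that is handled cleanly by transitivity once the matrix picture confirms the $\rmr\rmc$ base case.
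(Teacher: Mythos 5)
Your proposal is correct and follows essentially the same route as the paper, which likewise establishes the two base cases $\pi \sim \pi^\rmi$ and $\pi \sim \pi^{\rmr\rmc}$ in the preceding subsection and then deduces the theorem by citing Proposition \ref{prop:basic}; your explicit induction on the length of the composition merely spells out the closure step the paper leaves implicit. One cosmetic slip: in your induction step you denote a generic generator by $\rmc$, which the paper reserves for complement (not itself in the generating set $\{\rmi, \rmr\rmc\}$), so a neutral symbol would be clearer.
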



\subsection{Number of classes}

The number of equivalence classes in $\symS_n$ is
\[
1, 2, 3, 5, 7, 11, 15, 22, 30\dotsc \qquad n = 1,2,3 \dotsc.
\]
This is just the number of conjugacy classes in $\symS_n$ or equivalently
the number of partitions of $n$, A000041.

\subsection{Pattern avoidance}

We start with a very easy result.

\begin{proposition}
\begin{enumerate}

 \item The pattern $\bfp = 12$ gives
 \[
 \eqAn{\bfp} = \tom \qquad (n \geq 3).
 \]
 %
  

 
 \item The pattern $\bfp  = \biv{ \kern 2pt \nivrs{1\bl2} }{ \vinls{1}\bl\vinbs{2} }$
gives
 \[
 \eqAn{\bfp} = \begin{cases}
      \id^\rmr & \text{ if $n = 2$}, \\
      \id      & \text{otherwise}.
\end{cases}
 \]
 
\end{enumerate}
\end{proposition}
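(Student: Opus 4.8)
The plan is to treat the two patterns separately, in each case first translating ``$\pi$ avoids $\bfp$'' into an elementary condition on the single permutation $\pi$, and then asking which \emph{conjugacy classes} consist entirely of permutations meeting that condition.

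For part (1), recall that a permutation avoids the classical pattern $12$ precisely when it has no ascent, i.e.\ when it equals the decreasing permutation $\id^\rmr = n(n\dash1)\dotsm21$. Hence $\pi \in \eqAn{12}$ would force every conjugate of $\pi$ to equal $\id^\rmr$, so the conjugacy class of $\pi$ is the singleton $\{\id^\rmr\}$ and $\pi = \id^\rmr$ must lie in the centre of $\symS_n$. Since the centre of $\symS_n$ is trivial for $n \geq 3$ while $\id^\rmr \neq \id$, no such $\pi$ exists, giving $\eqAn{12} = \tom$.

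For part (2) the first step is to decode the bivincular pattern $\bfp = (12, X, Y)$. Reading the diagram: the left bar on the bottom line puts $0 \in X$ (so $i_1 = 1$), while the shared top rule over $1\bl2$ together with the right hook on the top line put $\{1,2\} \subseteq Y$ (so $j_2 = j_1 + 1$ and $j_2 = n$, forcing the two values to be $n\dash1$ and $n$). Thus an occurrence is a subsequence $\pi_{i_1}\pi_{i_2}$ with $i_1 = 1$ and $\pi_{i_1} = n\dash1$, $\pi_{i_2} = n$; since the larger value $n$ automatically sits at some position $>1$, I claim $\pi$ matches $\bfp$ if and only if $\pi(1) = n\dash1$. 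Consequently $\pi$ avoids $\bfp$ exactly when $\pi(1) \neq n\dash1$, so $\pi \in \eqAn{\bfp}$ exactly when no permutation in the conjugacy class of $\pi$ sends $1$ to $n\dash1$.

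The remaining step is purely group-theoretic. Conjugation relabels the elements within the cycles freely, so over a fixed cycle type the value $\mu(1) = 1$ is attainable iff there is a fixed point, and $\mu(1) = v$ for a given $v \neq 1$ is attainable iff there is a cycle of length $\geq 2$ (place $1$ and $v$ consecutively in it). For $n \geq 3$ the forbidden value $n\dash1$ is $\geq 2$, so the class must contain no cycle of length $\geq 2$; the only such class is that of $\id$, and indeed $\id(1) = 1 \neq n\dash1$, giving $\eqAn{\bfp} = \{\id\}$; for $n = 1$ the length-$2$ pattern cannot occur, so $\eqA{1}{\bfp} = \{\id\}$ as well. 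The case $n = 2$ is the one to watch, and is the main subtlety: there $n\dash1 = 1$, so the condition flips to forbidding a fixed point, the admissible class is the single transposition, and $\eqA{2}{\bfp} = \{21\} = \{\id^\rmr\}$. Beyond this boundary coincidence, the only step I would double-check carefully is the pattern decoding---reading $X$ and $Y$ off the correct rows and matching the hooks to the endpoint conventions $i_0 = j_0 = 0$, $i_{k+1} = j_{k+1} = n+1$.
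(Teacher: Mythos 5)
Your proof is correct, and it splits into one part that matches the paper and one that does not. For part (2) you follow essentially the paper's route: both arguments reduce the pattern to the condition that the one-line notation begins with $n-1$, and then show any class containing a cycle of length $\geq 2$ has a member realizing this. Your relabelling step (place $1$ and $n-1$ consecutively in some cycle of length $\geq 2$) is in fact cleaner than the paper's explicit conjugator $(1\,a)(b\,(n-1))$, which is ill-defined in edge cases such as $a=n-1$ or $b=1$; and your explicit decoding of the bivincular pattern as $(12,\{0\},\{1,2\})$, together with your careful handling of $n=1$ and of the degeneration $n-1=1$ at $n=2$ (which is precisely why $\id$ is replaced by $21$ there), fills in details the paper dismisses with \lqq it is clear\rqq. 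For part (1) you take a genuinely different route: the paper exhibits an explicit witness in the class of $\id^\rmr$ containing $12$, namely $2143\dotsm(n-1)(n-2)n$ or $2143\dotsm n(n-1)$ according to the parity of $n$, whereas you observe that $\pi\in\eqAn{12}$ would force the conjugacy class of $\id^\rmr$ to be a singleton, hence $\id^\rmr\in Z(\symS_n)=\{\id\}$ for $n\geq 3$, contradicting $\id^\rmr\neq\id$. The centre argument is shorter and avoids the parity case split; the paper's construction has the virtue of being the prototype for the explicit conjugation arguments used throughout the rest of its conjugacy section. Both versions are complete proofs.
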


\begin{proof}
\text{}
\begin{enumerate}

\item  Take any permutation that avoids this pattern. It must be
 $n (n-1) \dotsm 3 2 1$. This pattern has cycle type $(2,2, \dotsc, 2,1)$
 if $n$ is odd and $(2,2, \dotsc, 2,2)$ if $n$ is even. The permutations
 $2 1 4 3 \dotsm (n-1) (n-2) n$ (if $n$ is odd) or $2 1 4 3 \dotsm n (n-1)$
 (if $n$ is even) have the same cycle type, but contain the pattern. Therefore
 the count is always zero. This argument works for $n \geq 3$ but not in $\symS_1$ and
 $\symS_2$.

\item
 I claim that $\eqAn{\bfp}$ consists of the identity permutation, except
 when $n=2$ then it is replaced by $21$. It is clear that these permutations
 are in the set, we just need to show that there are no others. To do this
 we must show that we can for any cycle type different from $(1,1\dotsc,1)$
 (for the identity) we can create a permutation $\pi = (n-1) \dotsm n \dotsm$ with
 that cycle type. So take a permutation $\pi'$ that contains a cycle of
 length $\geq 2$. Let $(a b \dotsm)$ be the start of that cycle. Now just
 conjugate with the $(1 a)(b (n-1))$ and we have preserved the cycle type and
 put $n-1$ and the beginning in one-line notation.\qedhere

\end{enumerate}
\end{proof}

The next result is a special case of Proposition \ref{prop:CyclT:avoid-k-cycle}).

\begin{proposition} \label{prop:CyclT:derangements}
The pattern $\bfp = \biv{\nivls{1}}{\vinls{1}} = (1, \{0\}, \{0\})$ gives
\begin{align*}
\eqAn{\bfp} &= \text{derangements in $\symS_n$} \\
|\eqAn{\bfp}| &= 0, 1, 2, 9, 44, 265, 1854, 14833, 133496, \dotsc, \qquad n = 1,2,3,\dotsc
\end{align*}
which is sequence A000166. This sequence has the generating function $\frac{e^{-x}}{1-x}$.
\end{proposition}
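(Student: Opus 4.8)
The plan is to decode the bivincular pattern, recast the class-avoidance condition in terms of fixed points, and then appeal to the classical enumeration of derangements.

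First I would unwind the definition of $\bfp = (1,\{0\},\{0\})$. Here $k=1$, so an occurrence in $\pi = \pi_1\dotsm\pi_n$ is a single entry $\pi_{i_1}$. The requirement from $X = \{0\}$, together with the convention $i_0 = 0$, forces $i_1 = i_0+1 = 1$, so the occurrence must sit in the first position. The requirement from $Y = \{0\}$, together with $j_0 = 0$, forces $j_1 = j_0+1 = 1$; since $\{j_1\} = \{\pi_{i_1}\}$ this says the occurring value is $1$. Hence $\pi$ matches $\bfp$ exactly when $\pi_1 = 1$, and $\pi$ avoids $\bfp$ exactly when $\pi(1)\neq 1$, i.e.\ when the letter $1$ is not a fixed point of $\pi$.

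Next I would translate membership in $\eqAn{\bfp}$. Since the equivalence is conjugacy, $\pi\in\eqAn{\bfp}$ means that no permutation conjugate to $\pi$ fixes the letter $1$. The key observation is that the conjugacy class of $\pi$ contains a permutation fixing $1$ if and only if $\pi$ has at least one fixed point. Indeed, conjugation preserves cycle type, hence the number of fixed points, so if $\pi$ is a derangement then so is every conjugate and none can fix $1$; conversely, if $\pi(a) = a$ for some $a$, then choosing $\sigma$ with $\sigma(a) = 1$ and setting $\mu = \sigma\pi\sigma^\rmi$ yields $\mu(1) = \sigma\pi\sigma^\rmi(1) = \sigma\pi(a) = \sigma(a) = 1$, so the conjugate $\mu$ matches $\bfp$.

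Combining the two steps gives $\pi\in\eqAn{\bfp}$ if and only if $\pi$ has no fixed point, identifying $\eqAn{\bfp}$ with the derangements of $\symS_n$. The enumeration is then classical: the number of derangements is $D_n = n!\sum_{i=0}^{n}(-1)^i/i!$, giving the values $0,1,2,9,44,265,\dotsc$ and the exponential generating function $e^{-x}/(1-x)$, i.e.\ sequence A000166. I expect no serious obstacle here; the only delicate point is keeping the position and value conventions of the bivincular pattern straight, so that matching is forced to be the value $1$ in the first position---which for $k=1$ coincides exactly with $1$ being a fixed point.
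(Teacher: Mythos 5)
Your proof is correct and takes essentially the same route as the paper's: both reduce an occurrence of $(1,\{0\},\{0\})$ to the condition $\pi_1=1$ (i.e., the letter $1$ is a fixed point), and both combine the invariance of cycle type under conjugation with an explicit conjugation moving a fixed point $a$ to $1$ (the paper swaps $1$ and $a$, you take any $\sigma$ with $\sigma(a)=1$) to identify $\eqAn{\bfp}$ with the derangements, after which the enumeration and generating function $e^{-x}/(1-x)$ are classical. Your careful unwinding of the bivincular conventions $X=Y=\{0\}$ is a point the paper leaves implicit, but it is the same argument.
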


\begin{proof}
 A permutation has no fixed points
 if and only its cycle type contains no $1$. Call the set of derangements in
 $\symS_n$, $D_n$.
 
 ($D_n \subseteq \symS_n$): Take $\pi$ not in $D_n$, then some permutation
 with the same cycle type as $\pi$ contains the pattern. This implies that
 $\pi$ has a $1$ in its cycle type so $\pi$ has a fixed point. Therefore $\pi$
 is not in $D_n$.
 
 ($\symS_n \subseteq D_n$): Take $\pi$ not in $\symS_n$, so $\pi$ has a fixed
 point and therefore its cycle type contains a $1$. Let $a$ be the fixed point.
 Swap $1$ and $a$ in the one-line notation for $\pi$ and we turn $1$ into a
 fixed point, while preserving the cycle type.
\end{proof}

By \cite[Exercise 7, Chapter 2]{MR847717} we have that the number of permutations
of $\dbrac{n}$ that have no $k$-cycle is given by the generating function
\[
\frac{e^{-x^k/k}}{1-x}.
\]
The proposition above is the special case $k = 1$ which means the permutations
have no $1$-cycles, i.e., fixed points. It turns out that the pattern that
appears in the proposition is part of a family of patterns:

\begin{proposition} \label{prop:CyclT:avoid-k-cycle}
Let $k \geq 1$
The pattern
\begin{align*}
\bfp =& \biv{ \nivl{1\bl2\bl3\bl\dotsm\bl k} }{ \vinl{2\bl3\bl\dotsm\bl k\bl1} }
\end{align*}
gives
\[
\eqAn{\bfp} = \text{permutations in of $\dbrac{n}$ that do not contain a $k$-cycle},
\]
the count has generating function $\frac{e^{-x^k/k}}{1-x}$. For $k = 1,\dotsc,7$ we get
sequences A000166, A000266, A000090, A000138, A060725, A060726, A060727.
\end{proposition}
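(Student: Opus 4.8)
The plan is to first translate the bivincular pattern $\bfp$ into a concrete condition on the one-line notation of $\pi$, and then to run the same conjugacy argument that proved Proposition~\ref{prop:CyclT:derangements}, now with a $k$-cycle in place of a single fixed point. Writing $\bfp$ in the triple notation $(p,X,Y)$, the underlying permutation is $p = 23\dotsm k1$, while the vertical bars on the left together with the over/underlines spanning the whole top and bottom lines give $X = Y = \{0,1,\dotsc,k-1\}$. Unwinding the defining conditions with the convention $i_0 = j_0 = 0$, the inclusion $\{0,\dotsc,k-1\} \subseteq Y$ forces $j_\ell = \ell$ for $1 \le \ell \le k$, so an occurrence uses exactly the values $1,\dotsc,k$; likewise $\{0,\dotsc,k-1\} \subseteq X$ forces $i_\ell = \ell$, so the occurrence sits in the first $k$ positions. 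Since the relative order is $p = 23\dotsm k1$ and the values are exactly $1,\dotsc,k$, an occurrence exists if and only if $\pi_1\pi_2\dotsm\pi_k = 23\dotsm k1$, that is $\pi(1)=2,\ \pi(2)=3,\ \dotsc,\ \pi(k-1)=k,\ \pi(k)=1$. I would record this as a preliminary observation: $\pi$ matches $\bfp$ if and only if $\pi$ contains the specific $k$-cycle $(1\,2\,\dotsm\,k)$.

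With this reformulation in hand, the equality $\eqAn{\bfp} = \{\pi \in \symS_n : \pi \text{ has no } k\text{-cycle}\}$ follows from the fact that conjugation preserves cycle type. For one inclusion, if $\pi$ has no $k$-cycle then neither does any conjugate of $\pi$; in particular no conjugate contains $(1\,2\,\dotsm\,k)$, so by the observation no permutation in the class matches $\bfp$, whence $\pi \in \eqAn{\bfp}$. For the reverse, suppose $\pi$ has a $k$-cycle $(a_1\,a_2\,\dotsm\,a_k)$. Choosing any permutation $\sigma$ with $\sigma(a_\ell) = \ell$ and conjugating, $\sigma\pi\sigma^\rmi$ carries the cycle $(1\,2\,\dotsm\,k)$ and therefore matches $\bfp$; thus the class of $\pi$ contains a matching permutation and $\pi \notin \eqAn{\bfp}$. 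Combining the two implications yields the claimed set equality, and for $k=1$ this is precisely the fixed-point/swap argument already used in Proposition~\ref{prop:CyclT:derangements}.

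For the enumeration I would appeal to the exponential formula: a permutation is a set of cycles, the exponential generating function for cycles of a single length $j$ is $x^j/j$, so deleting the length-$k$ term gives the cycle generating function $-\log(1-x) - x^k/k$ and hence the generating function for permutations with no $k$-cycle is
\[
\exp\!\Bigl(-\log(1-x) - \tfrac{x^k}{k}\Bigr) = \frac{e^{-x^k/k}}{1-x},
\]
which matches the count quoted from \cite[Exercise 7, Chapter 2]{MR847717}. Specializing $k = 1,\dotsc,7$ then identifies the listed sequences A000166, A000266, A000090, A000138, A060725, A060726, A060727.

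The only genuine obstacle is the first paragraph: one must read the two left vertical bars and the spanning over/underlines correctly off the picture and confirm that $X$ and $Y$ are the full initial segments $\{0,\dotsc,k-1\}$, and in particular that $k \notin X,Y$ because there is no bar on the right. An error here would pin the occurrence to different positions or values and break the identification with a $k$-cycle. Once the matching condition is correctly reduced to \emph{$\pi$ contains $(1\,2\,\dotsm\,k)$}, the conjugacy argument and the generating-function computation are both routine.
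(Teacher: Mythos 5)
Your proposal is correct and matches the paper's intended argument: the paper omits this proof, stating it is a slight generalization of the proof of Proposition~\ref{prop:CyclT:derangements}, and your conjugacy argument is precisely that generalization, with the $k$-cycle replacing the fixed point. The two details you add beyond the paper — the explicit translation of the bivincular pattern to the condition $\pi_1\dotsm\pi_k = 23\dotsm k1$ (i.e., $\pi$ contains the cycle $(1\,2\,\dotsm\,k)$), and the exponential-formula derivation of $e^{-x^k/k}/(1-x)$, which the paper instead cites from its reference — are both correct.
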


\begin{proof}
This is a slight generalization of the proof above and omitted.
\end{proof}

The next two propositions have similar proofs to Proposition \ref{prop:CyclT:derangements},
so we omit the proofs.

\begin{proposition} \label{prop:CyclT:involutions}
The pattern $\bfp = \biv{\nivls{1\bl2\bl3}}{\vinls{2\bl3}\bl\vinbs{1}}$ 
gives
\begin{align*}
\eqAn{\bfp} &= \text{involutions in $\symS_n$} \\
|\eqAn{\bfp}| &= 1, 2, 4, 10, 26, 76, 232, 764, 2620, \dotsc, \qquad n = 1,2,3,\dotsc
\end{align*}
which is A000085. This sequence has the generating function $\exp(x+x^2/2)$.
\end{proposition}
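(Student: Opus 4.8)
The plan is to mirror the proof of Proposition~\ref{prop:CyclT:derangements}: first decode what it means for a \emph{single} permutation to match $\bfp$, and then close the condition up under conjugacy. Writing $\bfp = (231, X, Y)$ in the triple notation, I would read off from the bars that $X = \{0,1\}$ and $Y = \{0,1,2\}$. The constraint $Y = \{0,1,2\}$ forces the three matched values to be exactly $1,2,3$, while $X = \{0,1\}$ forces the first two matched positions to be positions $1$ and $2$, with the third occurring later. Since the underlying permutation of the pattern is $231$, an occurrence must read $\pi_1 = 2$, $\pi_2 = 3$, with the value $1$ sitting in some position $\geq 3$ --- which happens automatically, since positions $1,2$ already carry the values $2,3$. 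Hence, for $n \geq 3$, a permutation $\pi$ matches $\bfp$ if and only if $\pi(1) = 2$ and $\pi(2) = 3$; for $n \leq 2$ no length-three pattern can occur, so all of $\symS_1, \symS_2$ lies in $\eqAn{\bfp}$, accounting for the first two terms $1,2$ of the sequence.

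Next I would translate this first-two-entries condition into cycle language. The equalities $\pi(1) = 2$ and $\pi(2) = 3$ say exactly that $\pi$ contains the chain $1 \to 2 \to 3$, which forces the cycle through $1$ to have length at least $3$. Because $\eqAn{\bfp}$ requires the \emph{entire} conjugacy class to avoid $\bfp$, and because cycle type is a conjugacy invariant, the whole argument reduces to the following claim: the conjugacy class of $\pi$ contains a permutation $\mu$ with $\mu(1)=2$ and $\mu(2)=3$ if and only if $\pi$ has a cycle of length at least $3$.

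The two directions of this claim are short. For the forward direction, if $\mu$ is conjugate to $\pi$ with $\mu(1)=2$ and $\mu(2)=3$, then $\mu$ has a cycle of length at least $3$ (the one carrying $1 \to 2 \to 3$), and since $\pi$ and $\mu$ share a cycle type, $\pi$ has such a cycle too. For the converse, suppose $\pi$ has a cycle $(a_1\,a_2\,a_3\dotsm)$ of length at least $3$; I would choose any $\sigma \in \symS_n$ with $\sigma(a_1)=1$, $\sigma(a_2)=2$, $\sigma(a_3)=3$ and set $\mu = \sigma \pi \sigma^{\rmi}$, so that $\mu(1)=\sigma\pi(a_1)=\sigma(a_2)=2$ and $\mu(2)=\sigma\pi(a_2)=\sigma(a_3)=3$, exhibiting a conjugate that matches $\bfp$. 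Combining the two directions, $\pi \in \eqAn{\bfp}$ precisely when every cycle of $\pi$ has length at most $2$, that is, precisely when $\pi$ is an involution. I expect the only genuinely delicate step to be the decoding of the bars into $X$ and $Y$ together with the verification that the value-$1$ requirement imposes no extra condition; once matching is pinned down to $\pi(1)=2$, $\pi(2)=3$, the conjugation argument is routine and parallels Proposition~\ref{prop:CyclT:derangements}.

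Finally, the enumeration is standard: involutions are exactly the permutations all of whose cycles are fixed points or transpositions, so by the exponential formula their exponential generating function is $\exp(x + x^2/2)$, where the summand $x$ records the fixed points and $x^2/2$ the $2$-cycles; this is sequence A000085, with initial values $1,2,4,10,26,\dotsc$ as claimed.
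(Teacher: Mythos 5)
Your proposal is correct and takes exactly the approach the paper intends: the paper omits this proof as being \lqq similar to Proposition \ref{prop:CyclT:derangements}\rqq, and your decoding of $\bfp = (231,\{0,1\},\{0,1,2\})$ into the matching condition $\pi(1)=2$, $\pi(2)=3$, followed by the conjugation argument showing a class contains such a permutation if and only if its cycle type has a part of size at least $3$, is precisely that omitted argument. The handling of $n\leq 2$ and the enumeration via $\exp(x+x^2/2)$ are also in order.
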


It turns out that this pattern is also part of a family of patterns:

\begin{proposition}
Let $k \geq 1$
The pattern
\[
\bfp = \biv{ \nivl{1\bl2\bl3\bl\dotsm\bl k} }
           { \vinl{2\bl3\bl\dotsm\bl k}\bl\vinb{1} }
\]
gives
\[
\eqAn{\bfp} = \text{permutations in of $\symS_n$ only containing cycles of length $< k$},
\]
the count has the generating function
\[
\exp{\left( x + \frac{x^2}{2} + \frac{x^3}{3} + \dotsb +
	 \frac{x^{k-1}}{k-1} \right)}.
\]
For $k = 1,\dotsc,7$ we get
sequences A000004 (The zero sequence), A000012 (The all $1$'s sequence), A000085, A057693, A070945,
A070946, A070947.
\end{proposition}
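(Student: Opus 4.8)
The plan is to follow the template of Propositions \ref{prop:CyclT:derangements} and \ref{prop:CyclT:avoid-k-cycle}, the only genuinely new ingredient being a careful reading of the adjacency data. First I would translate the pattern into the triple notation $(p,X,Y)$. Here $p = 23\dotsm k1$; the solid bar spanning $1\bl2\bl\dotsm\bl k$ in the top line together with its left wall forces $Y = \{0,1,\dotsc,k-1\}$; and in the bottom line the letters $2\bl3\bl\dotsm\bl k$ form a single adjacency block carrying a left wall while the final letter $1$ is detached, forcing $X = \{0,1,\dotsc,k-2\}$. Unwinding the definition of an occurrence $\pi_{i_1}\dotsm\pi_{i_k}$, the value constraint makes the occupied values exactly $\{1,\dotsc,k\}$, and the position constraint pins $i_1,\dotsc,i_{k-1}$ to $1,2,\dotsc,k-1$, while the index $i_k$ of the last letter is only required to satisfy $i_k > i_{k-1} = k-1$. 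The pattern $p$ then records $\pi(1) = 2,\ \pi(2) = 3,\ \dotsc,\ \pi(k-1) = k$ together with $\pi(i_k) = 1$ and $i_k \geq k$. The one and only difference from Proposition \ref{prop:CyclT:avoid-k-cycle} is that detaching the last letter frees $i_k$ from being forced equal to $k$, and this is exactly what will upgrade ``a $k$-cycle'' to ``a cycle of length at least $k$''.

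Next I would prove that a permutation matching $\bfp$ must possess a cycle of length $\geq k$. Indeed the relations above exhibit a chain $1 \to 2 \to \dotsm \to k$ of $k$ distinct elements lying in a single cycle of $\pi$, and the relation $\pi(i_k) = 1$ closes this cycle back through $1$; since $i_k \geq k$, either $i_k = k$, giving the $k$-cycle $(1\,2\,\dotsm\,k)$, or $i_k > k$ is a fresh element and the cycle is strictly longer. In both cases the cycle through $1$ has length $\geq k$. For the converse I would start from any $\pi$ owning a cycle $(c_1\,c_2\,\dotsm\,c_\ell)$ with $\ell \geq k$ and conjugate by a relabelling $\sigma$ chosen with $\sigma(c_t) = t$ for $1 \leq t \leq k$; the resulting $\pi' = \sigma\pi\sigma^{-1}$ has the same cycle type as $\pi$, satisfies $\pi'(t) = t+1$ for $t < k$, and has $(\pi')^{-1}(1) = \sigma(c_\ell) \geq k$, so it matches $\bfp$. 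Combining the two directions with the invariance of cycle type under conjugacy, exactly as in the proof of Proposition \ref{prop:CyclT:derangements}, yields $\eqAn{\bfp} = \{\pi \in \symS_n : \text{every cycle of } \pi \text{ has length} < k\}$.

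Finally, the count follows from the exponential formula: the class of permutations all of whose cycle lengths lie in $\{1,2,\dotsc,k-1\}$ has exponential generating function $\exp\bigl(\sum_{j=1}^{k-1} x^j/j\bigr) = \exp(x + x^2/2 + \dotsb + x^{k-1}/(k-1))$, and specialising $k = 1,\dotsc,7$ I would match the stated entries (for instance $k=3$ gives the involutions, A000085). I expect the main obstacle to be purely bookkeeping: reading off $X$ and $Y$ correctly from the picture and verifying that it is precisely the removal of the position-adjacency between the $(k-1)$-st and $k$-th letters, and nothing else, that replaces the exact length $k$ of Proposition \ref{prop:CyclT:avoid-k-cycle} by the inequality length $\geq k$. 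Once the occurrence is decoded, both the cycle-theoretic characterisation and the enumeration are routine adaptations of the earlier proofs.
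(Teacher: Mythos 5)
Your proof is correct and follows exactly the template the paper intends: the paper omits this proof, stating only that it is similar to that of Proposition \ref{prop:CyclT:derangements}, and your argument (decode the occurrence as $\pi(t)=t+1$ for $t<k$ with $\pi^{-1}(1)\geq k$, deduce a cycle of length $\geq k$ through $1$, realize the pattern in any class containing such a cycle via conjugation, then apply the exponential formula) is precisely that contrapositive-plus-conjugation argument with the omitted details supplied. No gaps.
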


%
%

\begin{proposition}
The pattern $\bfp = \biv{ \nivls{1}\bl\nivrs{2} }{ \vins{2\bl1} }$ 
gives
 \begin{align*}
 \eqAn{\bfp} &= \text{ $\id$ and transpositions in $\symS_n$} \\
 |\eqAn{\bfp}| &= 1, 1, 4, 7, 11, 16, 22, 29, 37, \dotsc, \qquad n = 1,2,3,\dotsc
 \end{align*}
 If we count from $n = 3$ we get $T(n+1)+1$ where $T(n) = \frac{n(n+1)}{2}$ is
 A000217, the triangular numbers.
 \end{proposition}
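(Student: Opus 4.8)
The plan is to begin by unpacking the pattern. In the triple notation $\bfp = (21,\{1\},\{0,2\})$: the constraint $X=\{1\}$ forces the two chosen letters to sit in adjacent positions, the relative order $21$ makes them a descent, the constraint $0\in Y$ forces the smaller letter to be the global minimum $1$, and $2\in Y$ forces the larger letter to be the global maximum $n$. Hence $\pi$ matches $\bfp$ exactly when $\pi_i=n$ and $\pi_{i+1}=1$ for some $i$, that is, when the letter $n$ stands immediately to the left of the letter $1$ in one-line notation. Because a conjugacy class here is precisely the set of all permutations of a fixed cycle type, this gives the working criterion: $\pi\in\eqAn{\bfp}$ if and only if no permutation sharing the cycle type of $\pi$ writes $n$ immediately before $1$.

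Next I would verify that $\id$ and the transpositions belong to $\eqAn{\bfp}$ for $n\ge 3$. The identity is alone in its class and plainly avoids $\bfp$. The transpositions form one class, so I would analyse a single generic transposition $(a\,b)$ with $a<b$: its letter $n$ sits in position $n$ unless $b=n$, in which case it sits in position $a$; its letter $1$ sits in position $1$ unless $a=1$, in which case it sits in position $b$. Running through the four possibilities, the position of $n$ can be one less than the position of $1$ only when $a=1$, $b=n$ and $n=2$. Thus for $n\ge 3$ no transposition matches $\bfp$, and the whole class avoids it.

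It remains to rule out every other cycle type, and this is the step I expect to be the real obstacle. When the cycle type has a part of length $\ge 3$ (so $n\ge 3$ and the letters $n-1,n,1$ are distinct), I would construct a representative one of whose long cycles sends $n-1\mapsto n\mapsto 1$ and place the remaining cycles on the other letters, producing $\mu(n-1)=n$, $\mu(n)=1$ and hence a match. The leftover cycle types are the $2^m1^{\,n-2m}$; the values $m=0,1$ give $\id$ and the transpositions already handled, so the delicate case is $m\ge2$. Here all cycles have length at most $2$, so the adjacency can only come from two disjoint transpositions $(p\,n)$ and $(p+1\,1)$ at neighbouring positions. Since $m\ge2$ forces $n\ge4$, the labels $1,2,3,n$ are distinct, and I would take the disjoint transpositions $(2\,n)$ and $(1\,3)$, distribute the other $m-2$ transpositions over $\{4,\dots,n-1\}$, and obtain $\mu(2)=n$, $\mu(3)=1$, again a match. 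Every such class is therefore excluded.

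Finally the small cases are settled by hand: in $\symS_1$ only $\id$ occurs, while in $\symS_2$ the unique transposition is $(1\,2)$, which matches $\bfp$ by the computation above, leaving only $\id$; this accounts for the anomalous value $1$ at $n=2$. For $n\ge3$ the surviving permutations are exactly $\id$ together with the $\binom{n}{2}$ transpositions, giving $|\eqAn{\bfp}|=1+\binom{n}{2}$, which is $T(n+1)+1$ under the indexing that starts the sequence at $n=3$.
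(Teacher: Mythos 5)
Your proposal is correct and takes essentially the same approach as the paper: both reduce matching to ``$n$ immediately precedes $1$,'' verify the identity and transposition classes directly, and for every other cycle type exhibit a same-cycle-type representative containing that adjacency (your constructed cycles $(n{-}1,\,n,\,1,\dotsc)$ and $(2\,n)(1\,3)$ play exactly the role of the paper's conjugations by $(c\,1)(b\,n)((n{-}1)\,a)$ and $(a\,1)(b\,(n{-}1))(c\,(n{-}2))(d\,n)$). If anything you are more careful than the paper, which dismisses the transposition case as clear and does not flag the $n=2$ anomaly that you resolve explicitly.
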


\begin{proof}
I claim that $\eqAn{\bfp}$ consists of
permutations with cycle type $(1,1,\dots,1)$ or $(2,1,\dotsc,1)$. Call the set
with those kinds of cycle type $Y_n$. 

($Y_n \subseteq \eqAn{\bfp}$):
$\eqAn{\bfp}$ clearly contains the identity permutation with the former cycle type.
So consider any permutation $\pi$ with the latter cycle type. This permutation
will consist of a transposition and then everything else is fixed. A permutation
of this type clearly avoids $\bfp$.

($\eqAn{q} \subseteq Y_n$):
Take $\pi$ that is not in $Y_n$. Then we must consider two cases
\begin{enumerate}
\item
The cycle type of $\pi$ has a cycle of length $\geq 3$: Let this cycle
start with $(a b c \dotsm)$. Then just conjugate $\pi$ with
$(c 1)(b n)((n-1) a)$, which preserves the cycle type but introduces
an occurrence of the pattern.

\item
The cycle type of $\pi$ has two cycles, both of length $\geq 2$.
We can assume these cycles have length exactly $2$, since if they
were larger we can just apply the argument above to either one of them.
So let these cycles be $(a b)$ and $(c d)$. Then just conjugate $\pi$
with $(a 1)(b (n-1))(c (n-2))(d n)$. \qedhere
\end{enumerate}
\end{proof}

\begin{proposition} \label{prop:centralpolyg}
The pattern $\bfp = \biv{\nivbs{1\bl2}\bl\nivrs{3}}{\vinbs{2\bl3\bl1}}$ gives
\begin{align*}
\eqAn{\bfp} &= \text{ permutations moving at most two letters} \\
|\eqAn{\bfp}| &= 1,2,4,7,11,16,22,29,37, \dotsc, \qquad n = 1,2,3,\dotsc
\end{align*}
This is A000124: Central polygonal numbers, $n(n-1)/2 +1$.
\end{proposition}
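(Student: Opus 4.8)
The plan is first to translate the picture into the triple notation of \cite{BCDK08}: here $\bfp$ is $(231,\emptyset,\{3\})$, so an occurrence in $\pi\in\symS_n$ is a subsequence $\pi_{i_1}\pi_{i_2}\pi_{i_3}$ with $i_1<i_2<i_3$ in the relative order $231$ whose largest entry $\pi_{i_2}$ equals the global maximum $n$ (the right hook on the top line forces the top value $j_3=n$, while the bottom line carries no position constraints). Writing $m$ for the position of $n$ in $\pi$, this says that $\pi$ \emph{matches} $\bfp$ exactly when there are indices $i_1<m<i_3$ with $\pi_{i_1}>\pi_{i_3}$; equivalently $\pi$ \emph{avoids} $\bfp$ iff every entry to the left of $n$ is smaller than every entry to its right. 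Since the entries on either side of $n$ together exhaust $\{1,\dots,n-1\}$, avoidance is the block condition $\{\pi_1,\dots,\pi_{m-1}\}=\{1,\dots,m-1\}$.

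Next I would observe that $\eqAn{\bfp}$ is a union of conjugacy classes: whether $\pi$ and all its conjugates avoid $\bfp$ depends only on the cycle type of $\pi$. Thus it suffices to decide, for each cycle type, whether \emph{every} representative avoids $\bfp$, and the statement becomes the assertion that the classes all of whose members avoid $\bfp$ are exactly $(1^n)$ and $(2,1^{n-2})$, i.e.\ the permutations moving at most two letters, of which there are $1+\binom{n}{2}=n(n-1)/2+1$. The easy inclusion is a direct check: the identity has $n$ last, so nothing lies to its right; and a transposition $(a\,b)$ with $a<b$ either fixes $n$ (then $n$ is last and avoidance is automatic) or equals $(a\,n)$, in which case the entries left of $n$ are $\{1,\dots,a-1\}$ and those right of it are $\{a,a+1,\dots,n-1\}$, so the block condition holds.

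The substantive step is the converse: every cycle type that moves at least three letters has a representative matching $\bfp$. Since a class fails to lie in $\eqAn{\bfp}$ as soon as one of its members matches, it is enough to produce a single matching permutation of each such type, which I would do by conjugating (relabelling) a chosen representative, in the style of the earlier proofs in this section. If the type has a cycle of length $\ge 3$, I pick three consecutive cycle elements $\alpha\to\beta\to\gamma$ and conjugate by any $\sigma$ with $\sigma\alpha=1$, $\sigma\beta=2$, $\sigma\gamma=n$; the result $\pi$ has $\pi_1=2$ and $\pi_2=n$, and because $1$ is then forced into a position $\ge 3$, the values $(2,n,1)$ form a $231$ with peak $n$. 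If instead all nontrivial cycles are transpositions, there are at least two of them, say $(\alpha\,\beta)$ and $(\gamma\,\delta)$; conjugating by a $\sigma$ with $\sigma\alpha=2$, $\sigma\beta=n$, $\sigma\gamma=1$, $\sigma\delta=n-1$ yields $\pi_1=n-1$, $\pi_2=n$, $\pi_{n-1}=1$, again a $231$ with peak $n$. Both relabellings preserve cycle type, and the target values $\{1,2,n\}$ (resp.\ $\{1,2,n-1,n\}$) are distinct precisely because moving three (resp.\ four) letters forces $n\ge 3$ (resp.\ $n\ge 4$).

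The main obstacle is this last step. One must make sure the two constructions between them cover every cycle type moving at least three letters (a single long cycle versus two or more transpositions), and that each prescribed relabelling is genuinely realizable, i.e.\ that the named values are pairwise distinct and the conjugating permutation exists; this is where the small-$n$ bookkeeping lives. Once the block-form characterization of avoidance is established, however, the remaining work is light, and the enumeration $n(n-1)/2+1$ of A000124 falls out immediately.
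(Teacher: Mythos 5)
Your proof is correct and follows essentially the same route as the paper's: observe that $\eqAn{\bfp}$ is a union of conjugacy classes, check directly that the identity and all transpositions avoid $\bfp$, and for the converse conjugate a cycle of length $\geq 3$ into the form $2\,n\dotsm 1\dotsm$ (exactly the paper's construction) and a pair of $2$-cycles into a configuration with an occurrence $\cdot\,n\,\dotsm\,1$ (the paper uses $3\,n\,1\dotsm 2$, you use $(n-1)\,n\dotsm 1\,2$ --- an immaterial difference in relabelling). Your explicit block-form characterization of avoidance and the realizability checks for the conjugating $\sigma$ are details the paper leaves implicit, but the argument is the same.
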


\begin{proof}
Let $X_n$ be the set on the right. It is clear that $X_n \subseteq \eqAn{\bfp}$.
The other implication is also simple: given a cycle of length $3$ or more
we can conjugate to produce a permutation in the same class that looks like
$2n \dotsm 1 \dotsm$. Finally, given two $2$-cycles we can conjugate to
produce $3n1 \dotsm 2$.
\end{proof}

\begin{openproblem}
The permutations above are equinumerous (but not equal to) permutations
avoiding $132$- and $321$-avoiding permutations. It would be interesting
to produce a bijection.
\end{openproblem}

The next three propositions have proofs that are similar to the proof of
Proposition \ref{prop:centralpolyg}, so we omit their proofs.

\begin{proposition}
The pattern $\bfp = \biv{ \nivls{1\bl2}\bl\nivrs{3} }{ \vinbs{1\bl3}\bl\vinrs{2} }$ gives
\begin{align*}
\eqAn{\bfp} &= \text{ fixed point free involutions} \\
|\eqAn{\bfp}| &= 1,2,3,4,1,16,1,106,1 \dotsc, \qquad n = 1,2,3,\dotsc \\
 &= \begin{cases}
    1 & \text{ if $n$ is odd}, \\
    (n-1)!!+1  & \text{ if $n$ is even}.
\end{cases}
\end{align*}
The even subsequence is A001147: Double factorial numbers.
\end{proposition}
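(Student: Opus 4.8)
The plan is to first decode the bivincular pattern into a transparent combinatorial condition, and then, since $\eqAn{\bfp}$ is a union of conjugacy classes, to classify cycle types as \emph{good} (every permutation of that type avoids $\bfp$) or \emph{bad} (some permutation of that type contains $\bfp$). Reading off the triple notation, $\bfp = (132,\{3\},\{0,1,3\})$: the underlying permutation is $132$, the set $X=\{3\}$ forces the last letter of an occurrence to sit in the final position, and $Y=\{0,1,3\}$ forces its three values to be $1$, $2$ and $n$. Spelling this out, a permutation $\pi$ contains $\bfp$ if and only if $\pi_n = 2$ and the value $1$ occurs to the left of the value $n$, i.e. $\pi^{-1}(1) < \pi^{-1}(n)$. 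Everything after this is bookkeeping with the single criterion.

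For the easy inclusion I would show that the identity and the fixed point free involutions are good. The identity has $\pi_n = n \neq 2$ and so avoids $\bfp$. If $\pi$ is a fixed point free involution with $\pi_n = 2$, then $\pi_2 = n$, putting the value $n$ in position $2$; since $\pi^{-1}(1) = \pi(1)$ is neither $1$ (no fixed point) nor $2$, we get $\pi^{-1}(1) > 2 = \pi^{-1}(n)$, so the value $1$ lies to the right of $n$ and $\bfp$ does not occur. Hence every fixed point free involution avoids $\bfp$; together with the identity (the extra $+1$) these give $(n-1)!!+1$ when $n$ is even and $1$ when $n$ is odd, the even part being the double factorials A001147.

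The heart of the argument, and the main obstacle, is the reverse inclusion: every other cycle type must be shown bad by exhibiting one representative with $\pi_n = 2$ and $\pi^{-1}(1) < \pi^{-1}(n)$. I would split the remaining types into two cases, as in the proof of Proposition \ref{prop:centralpolyg}. If the cycle type has a part of length $\ge 3$, build a cycle running $\dots \to (n-1) \to n \to 2 \to \dots$ and distribute the remaining letters into the other cycles; then $\pi(n-1)=n$ and $\pi(n)=2$, so $n$ sits in position $n-1$ and $2$ in position $n$, which forces the value $1$ into some position $\le n-2$, automatically to the left of $n$. If instead the type is an involution with at least one fixed point and at least one transposition (the only remaining bad types, since the all-$2$'s partition gives the good involution class and the all-$1$'s partition gives the identity), make the value $1$ a fixed point and use the transposition $(2\,n)$; then $1$ sits first, $n$ second, and $2$ last. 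Both constructions realize the prescribed cycle type, so every such class contains $\bfp$ and is excluded from $\eqAn{\bfp}$.

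Finally I would dispose of the small cases. The constructions above need the three letters $n-1,n,2$ to be distinct, i.e. $n \ge 4$; for $n=1,2$ one checks the two admissible classes by hand, while $n=3$ is a genuine exception, where the chain $(n-1)\to n\to 2$ degenerates and the $3$-cycle class turns out to be good as well, giving the value $3$ listed in the sequence rather than the $1$ predicted by the closed form. I expect the delicate points to be the correct reading of the $X,Y$ constraints, on which the whole criterion rests, and verifying that the rigidity making fixed point free involutions good is genuinely special to them, so that the two constructions really do cover every other partition of $n$.
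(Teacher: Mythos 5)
Your proof is correct and follows essentially the route the paper intends: the paper omits this proof, saying only that it is similar to that of Proposition \ref{prop:centralpolyg}, namely verify the easy inclusion (the identity and the fixed point free involutions avoid $\bfp$) and then conjugate every other cycle type --- a type with a part of length $\geq 3$, or a mixed involution --- to a canonical representative containing the pattern, which is exactly your two-case construction; your decoding $\bfp=(132,\{3\},\{0,1,3\})$, i.e.\ $\pi_n=2$ with $1$ left of $n$, is also right. Your handling of $n=3$, where the $3$-cycle class avoids $\bfp$ so the count is $3$ (matching the listed sequence but not the stated closed form for odd $n$), goes beyond the paper and correctly pins down a small inaccuracy in the statement itself.
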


\begin{proposition}
The pattern $\bfp = \biv{\nivrs{1\bl2\bl3}}{\vinbs{1\bl3}\bl\vinrs{2}}$ gives
\begin{align*}
\eqAn{\bfp} &= \text{ the identity and $3$-cycles} \\
|\eqAn{\bfp}| &= 1,2, 3,9, 21, 41, 71,113,169 \dotsc, \qquad n = 1,2,3,\dotsc \\
 &= 1+2\binom{n}{3}.
\end{align*}
From $n=3$ this is A007290 plus $1$.
\end{proposition}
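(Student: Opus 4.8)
The plan is to first translate avoidance of $\bfp$ into an elementary condition and then, exactly as in the proof of Proposition~\ref{prop:centralpolyg}, to argue cycle type by cycle type. Reading off the value constraints (the barred hook on the top line) forces the three values of an occurrence to be precisely $\{n-2,n-1,n\}$, while the position hook on the bottom line forces the last entry of the occurrence to sit in position $n$; since the underlying pattern is $132$, an occurrence therefore exists if and only if $\pi_n=n-1$ and the value $n-2$ appears to the left of the value $n$, i.e.\ $\pi^{-1}(n-2)<\pi^{-1}(n)$. Because conjugacy classes are exactly cycle-type classes, membership in $\eqAn{\bfp}$ depends only on the cycle type of $\pi$: a permutation lies in $\eqAn{\bfp}$ if and only if no permutation of its cycle type satisfies the displayed condition. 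So it suffices to show that the cycle types $(1^n)$ and $(3,1^{n-3})$ are the only ones all of whose representatives avoid $\bfp$.

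For the forward inclusion I would check the two good types directly. The identity has $\pi_n=n\neq n-1$, so it avoids $\bfp$. For a $3$-cycle, a match would require $\pi_n=n-1$, hence $n\mapsto n-1$ inside the cycle; writing the cycle as $(n\;\,n-1\;\,c)$ one checks the two subcases $c=n-2$ and $c\neq n-2$ (so $n-2$ is fixed), and finds in both that the value $n-2$ never lies to the left of the value $n$, so the occurrence condition fails. As the class of a $3$-cycle is the set of all $3$-cycles, these types lie in $\eqAn{\bfp}$, giving ``$\supseteq$''.

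The reverse inclusion is where the real work lies, and the clean idea is that a single transposition already produces an occurrence: if $\pi$ contains the $2$-cycle $(n-1\;\,n)$, then $\{n-1,n\}$ is invariant, so $\pi$ permutes $\{1,\dots,n-2\}$ among themselves; hence the value $n$ sits in position $n-1$ while the value $n-2$ is forced into one of the positions $1,\dots,n-2$, and the condition $\pi^{-1}(n-2)<\pi^{-1}(n)$ holds automatically. Thus every cycle type containing a part equal to $2$ has a matching representative. The remaining bad types contain no $2$-cycle, so they either contain a cycle of length $\ell\geq 4$ or consist only of $3$-cycles with at least two of them. In the first case I would realise the long cycle as $(n-2\;\,n\;\,n-1\;\,x_1\cdots x_{\ell-3})$ with the $x_i$ chosen among $\{1,\dots,n-3\}$, which puts $n$ in position $n-2$ and $n-2$ in an even smaller position; in the second case I would use two of the $3$-cycles, say $(n\;\,n-1\;\,a)$ and $(n-2\;\,b\;\,c)$ with $c<a$, to separate $n-2$ from $n$. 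In every construction the leftover cycles of the prescribed type are placed on the unused values of $\{1,\dots,n-3\}$, and a one-line count shows there is exactly enough room.

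The main obstacle is this last family, the cycle types built only from $3$-cycles: here the slick transposition argument is unavailable, a single $3$-cycle on $\{n-2,n-1,n\}$ provably fails (it puts $n-2$ to the right of $n$), and one must instead hand-build the occurrence out of two separate $3$-cycles while keeping the cycle type exactly as prescribed. Once all bad types are shown to match, $\eqAn{\bfp}$ is exactly the identity together with the $3$-cycles, and the enumeration $1+2\binom{n}{3}$ is immediate since $\symS_n$ has $2\binom{n}{3}$ three-cycles.
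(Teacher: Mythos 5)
Your proof is correct and takes essentially the approach the paper intends: the paper omits this proof, saying it is similar to that of Proposition \ref{prop:centralpolyg}, whose method is exactly yours --- verify the good cycle types directly, then for every other cycle type conjugate so as to place the relevant cycles on the values $n-2,n-1,n$ and force an occurrence (your three cases: a $2$-cycle $(n-1\;n)$, a long cycle $(n-2\;\,n\;\,n-1\;\,x_1\cdots x_{\ell-3})$, or two $3$-cycles). Your explicit translation of the bivincular constraints into ``$\pi_n=n-1$ and $\pi^{-1}(n-2)<\pi^{-1}(n)$'' and the exhaustive case check correctly supply the details the paper leaves out.
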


\begin{proposition}
The pattern $\bfp = \biv{\nivrs{1\bl2\bl3}}{\vinls{2}\bl\vinbs{3\bl1}}$ gives
\begin{align*}
\eqAn{\bfp} &= \text{ the identity, $2$-cycles and $3$-cycles} \\
|\eqAn{\bfp}| &= 1,2, 4, 15, 31, 56, 92, 141, 205, \dotsc, \qquad n = 1,2,3,\dotsc 
\end{align*}
From $n=4$ this is A000330 plus $1$, where A000330 are the square pyramidal numbers,
$\frac{n(n+1)(2n+1)}{6}$.
\end{proposition}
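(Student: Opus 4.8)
The plan is to follow the template of Proposition \ref{prop:centralpolyg}: first pin down the exact combinatorial meaning of matching $\bfp$, then prove the asserted equality by two inclusions, working throughout with cycle types since the relation is conjugacy. I would begin by decoding the pattern. In the triple notation it is $\bfp = (231, \{0\}, \{1,2,3\})$: the bar on the bottom line forces the occurrence to begin in position $1$, while the over-bar together with the right hook on the top line forces the three values used to be $n-2, n-1, n$. Reading off the $231$ shape, $\pi$ matches $\bfp$ exactly when $\pi_1 = n-1$ and the value $n$ lies to the left of the value $n-2$, i.e. $\pi^{-1}(n) < \pi^{-1}(n-2)$. Writing $Z_n$ for the set of permutations whose cycle type is $(1^n)$, $(2,1^{n-2})$ or $(3,1^{n-3})$ (the identity, a single transposition, or a single $3$-cycle), the goal is $\eqAn{\bfp} = Z_n$ for $n \ge 4$.

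For the inclusion $Z_n \subseteq \eqAn{\bfp}$ I would argue directly, noting that every conjugate of a member of $Z_n$ again lies in $Z_n$, so it suffices to show that a permutation with at most one non-trivial cycle, of length $\le 3$, avoids $\bfp$. If $\pi_1 \ne n-1$ this is immediate; otherwise $1$ sits in the unique non-trivial cycle, which then contains $1$ and $n-1$ and at most one further element. For $n \ge 4$ the values $n$ and $n-2$ are distinct from $1$ and $n-1$, so each is either fixed (sitting in its natural increasing position) or is the single remaining cycle element; a short case check on whether the third cycle element equals $n$, equals $n-2$, or neither shows in every case that $n-2$ precedes $n$, so the matching condition fails.

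The reverse inclusion is where the real work lies, and it proceeds by exhibiting, for every cycle type not appearing in $Z_n$, one representative that matches $\bfp$; by conjugacy this excludes the whole class. There are two families. If the cycle type has a cycle of length $\ge 4$, I would realise that cycle as $(1, n-1, n, n-2, \dots)$ and distribute the remaining elements arbitrarily among the other cycles; this gives $\pi_1 = n-1$ with $n$ at position $n-1$ and $n-2$ at position $n$, so $\bfp$ occurs. If instead all cycles have length $\le 3$ but there are at least two non-trivial cycles, I would use one of them to carry $1 \mapsto n-1$ (as $(1, n-1)$ or $(1, n-1, y)$) and a second to carry $n$ and $n-2$ with $n$ to the left (as $(n, n-2)$, or as $(n, n-2, x)$ with $x \le n-3$), again producing an occurrence.

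The main obstacle is the boundary behaviour, and it is precisely what makes the stated count $1,2,4,15,\dots$ rather than the naive $1 + \binom{n}{2} + 2\binom{n}{3}$. All the constructions above require the four values $1, n-1, n, n-2$ (and, for two $3$-cycles, a few more from $\{2,\dots,n-3\}$) to be distinct, so they only apply once $n$ is large enough; more seriously, the easy-inclusion case check collapses at $n = 3$, where $n-2 = 1$ is forced into the cycle. Indeed in $\symS_3$ the $3$-cycle class contains $231$, which matches $\bfp$, so the $3$-cycles drop out of $\eqAn{\bfp}$ and the value at $n=3$ is $1 + \binom{3}{2} = 4$ rather than $6$. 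I would therefore treat $n \le 3$ separately (for $n \le 2$ the pattern cannot occur at all, so $\eqAn{\bfp} = \symS_n$), and state the clean characterisation together with the formula $1 + \binom{n}{2} + 2\binom{n}{3} = 1 + \tfrac{n(n-1)(2n-1)}{6}$, namely A000330 plus $1$, for $n \ge 4$. The count is then immediate: one identity, $\binom{n}{2}$ transpositions and $2\binom{n}{3}$ three-cycles.
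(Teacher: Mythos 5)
Your proof is correct and takes essentially the approach the paper intends: the paper in fact omits this proof, saying only that it is similar to that of Proposition \ref{prop:centralpolyg}, and your two-inclusion argument---decoding the pattern as $(231,\{0\},\{1,2,3\})$ (so matching means $\pi_1=n-1$ with $n$ left of $n-2$), checking directly that the identity, transpositions and single $3$-cycles avoid it, and conjugating every other cycle type to a representative such as $(1,n-1,n,n-2,\dots)$ or the pair $(1,n-1)$, $(n,n-2,\dots)$ that exhibits an occurrence---is exactly that template with the details supplied. Your boundary analysis is also right and is a genuine improvement in precision: $231\in\symS_3$ matches the pattern, so the $3$-cycles drop out at $n=3$ (giving the count $4$ rather than $6$), which is precisely why the proposition's set description is only valid from $n=4$, as the paper's qualifier ``From $n=4$'' tacitly concedes.
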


\begin{openproblem}
The number of permutations avoiding $\vin{13}\vinb{2}$ that contain the pattern $\vin{32}\vinb{1}$
exactly once is counted by the square pyramidal numbers. Find a bijection to the permutations
above (leaving out the identity).
\end{openproblem}

%

\section{Order} \label{sec:Order}

Here we will regard two permutations as equivalent if
they have the same order. Clearly the equivalence classes here will be unions
of conjugacy classes and therefore the results in this section will rely on results from
the previous one.

\subsection{Symmetry}

The following is a direct consequence of Theorem \ref{thm:cycletypesymmetry}:

\begin{theorem} \label{thm:ordersymmetry}
Let $\rma$ be a composition of elements from the set
$\{\rmi, \rmr\rmc\}$. Then
$\rma: \eqAn{\bfp} \to \eqAn{\bfp^\rma}$ is a bijection for all $n$.
\end{theorem}
%

\subsection{Number of classes}
The number of equivalence classes in $\symS_n$ is
\[
1,2,3,4,6,6,9,11,14 \qquad n = 1, \dotsc 9.
\]
Of course this is just A009490: Number of distinct orders of permutations of $n$ objects.

\subsection{Pattern avoidance}

\begin{proposition} \label{prop:ordernotfrom1cycl}
The pattern $\bfp = \biv{\nivl{1}}{\vinl{1}}$ gives
\begin{align*}
\eqAn{\bfp} = &\text{ permutations with order that can not be obtained from} \\
              &\text{ permutations with a fixed point ($1$-cycle).} \\
            = &\text{ permutations with order that can not be written $\prod a_i$} \\
              &\text{ with $1 + \sum a_i = n$.} \\
|\eqAn{\bfp}| &= 0, 1, 2, 6, 44, 0, 1644, 7728, 84384, \dotsc, \qquad n = 1,2,3,\dotsc
\end{align*}
\end{proposition}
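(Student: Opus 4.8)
The plan is to reduce the statement to cycle types, exactly as for conjugacy in Proposition~\ref{prop:CyclT:derangements}, and then to an arithmetic comparison between the orders realizable in $\symS_n$ and in $\symS_{n-1}$.

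First I would decode the pattern. It is the same pattern $(1,\{0\},\{0\})$ as in Proposition~\ref{prop:CyclT:derangements}: the constraint $0\in X$ forces the matched entry into the first position, and $0\in Y$ forces it to carry the smallest value. Hence a permutation matches $\bfp$ precisely when $\pi_1=1$, that is, when $1$ is a fixed point sitting in position~$1$.

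Next I would replace the coordinate-dependent condition ``$\pi_1=1$'' by the intrinsic condition ``has a fixed point'', using that an order class is a union of conjugacy classes. If some permutation $\mu$ of the same order as $\pi$ has any fixed point $a$, then $(1\,a)\,\mu\,(1\,a)$ has the same cycle type, hence the same order, and fixes $1$, so it matches $\bfp$; conversely matching $\bfp$ is a special case of having a fixed point. Therefore some permutation equivalent to $\pi$ matches $\bfp$ if and only if some permutation of $\symS_n$ of order $\mathrm{ord}(\pi)$ has a fixed point, and so $\pi\in\eqAn{\bfp}$ if and only if no permutation of $\symS_n$ of order $\mathrm{ord}(\pi)$ has a fixed point.

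The third step is the translation to partitions. A permutation of cycle type $\lambda\vdash n$ has order $\mathrm{lcm}(\lambda)$ and a fixed point exactly when $\lambda$ has a part equal to $1$; deleting that part (or, conversely, padding with parts equal to $1$, which leaves the lcm unchanged) shows that an order $d$ is attained in $\symS_n$ by a permutation with a fixed point if and only if $d=\mathrm{lcm}(\mu)$ for some $\mu\vdash n-1$, i.e.\ if and only if $d$ is the order of some element of $\symS_{n-1}$. Assembling the three steps yields exactly the asserted description: $\eqAn{\bfp}$ consists of those $\pi$ whose order cannot be realized with one fixed point together with further cycles of lengths $a_i$ summing to $n-1$. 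Finally, the enumeration is the sum of $|\{\pi\in\symS_n:\mathrm{ord}(\pi)=d\}|$ over the orders $d$ realizable in $\symS_n$ but not in $\symS_{n-1}$, each inner count being $\sum n!/\bigl(\prod_{i\ge1} i^{m_i}m_i!\bigr)$ over cycle types $\lambda\vdash n$ with $\mathrm{lcm}(\lambda)=d$ and cycle multiplicities $m_i$. I expect the only genuine obstacle to be arithmetic rather than combinatorial: deciding which orders are truly ``new'' at stage $n$. This is what produces the irregular entries, such as the $0$ at $n=6$, where every order already occurs in $\symS_5$, and it is presumably why the statement records the sequence term by term rather than through a closed form or generating function.
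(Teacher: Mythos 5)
Your proposal is correct and takes essentially the same route as the paper: the paper's entire proof is the one-line observation that the statement follows directly from Proposition~\ref{prop:CyclT:derangements}, since order classes are unions of conjugacy classes. Your write-up simply makes this explicit --- re-deriving the derangement characterization by conjugating a fixed point into position $1$, and spelling out the translation between orders realizable with a $1$-cycle in $\symS_n$ and orders of elements of $\symS_{n-1}$ --- so it is a fleshed-out version of the paper's argument.
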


\begin{proof}
This follows directly from Proposition \ref{prop:CyclT:derangements}.
\end{proof}

This result can also be generalized in the same way as Proposition \ref{prop:CyclT:derangements}
above:

\begin{proposition}
Let $k \geq 1$
The pattern $\bfp = \biv{ \nivl{1\bl2\bl3\bl\dotsm\bl k} }{ \vinl{2\bl3\bl\dotsm\bl k\bl1} }$
gives
\begin{align*}
\eqAn{\bfp} = &\text{ permutations with order that can not be obtained from}\\
              &\text{ permutations with a $k$-cycle,} \\
            = &\text{ permutations with order that can not be written
               $k \cdot m$ with $k \nmid m$.} \\
\end{align*}
\end{proposition}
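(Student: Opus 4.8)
The plan is to deduce this directly from Proposition~\ref{prop:CyclT:avoid-k-cycle}, exactly as Proposition~\ref{prop:ordernotfrom1cycl} is deduced from Proposition~\ref{prop:CyclT:derangements}, using the observation noted at the start of this section that an order-equivalence class is a union of conjugacy classes. First I would unwind the definitions: $\pi \in \eqAn{\bfp}$ means that every permutation of order $\operatorname{ord}(\pi)$ avoids $\bfp$. Since possessing a $k$-cycle is a conjugacy invariant, Proposition~\ref{prop:CyclT:avoid-k-cycle} says that all members of a given conjugacy class avoid $\bfp$ precisely when that class has no part equal to $k$ in its cycle type. Hence $\pi \in \eqAn{\bfp}$ if and only if \emph{no} conjugacy class whose cycle type has least common multiple $\operatorname{ord}(\pi)$ contains a part equal to $k$. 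Writing $d = \operatorname{ord}(\pi)$, this says that $\pi \in \eqAn{\bfp}$ exactly when there is no partition $\lambda \vdash n$ with $\operatorname{lcm}(\lambda) = d$ having a part equal to $k$, which is literally the statement that $d$ cannot be obtained as the order of a permutation of $\dbrac{n}$ possessing a $k$-cycle. This establishes the first description.

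Second, I would translate the combinatorial condition into the arithmetic one. One direction is immediate: if some $\lambda \vdash n$ with $\operatorname{lcm}(\lambda)=d$ has a part $k$, then $k \mid d$, so when $k \nmid d$ we get $\pi \in \eqAn{\bfp}$ for free. For the converse I would argue constructively. Let $d = \prod_i p_i^{e_i}$ and set $g(d) = \sum_i p_i^{e_i}$, the minimal number of moved points needed to realise an element of order $d$. The cycle type of $\pi$ itself sums to $n$ and has least common multiple $d$, so in particular $n \ge g(d)$; I would rewrite it in the economical form consisting of the prime-power cycles $p_i^{e_i}$ together with $n-g(d)$ fixed points, and then, whenever there is room, replace a block of $k$ fixed points by a single $k$-cycle. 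Because $k \mid d$ this preserves both the sum $n$ and the least common multiple $d$ while installing the desired part $k$. Collecting the constraints that this insertion imposes on $d$ then yields the arithmetic condition on $\operatorname{ord}(\pi)$ recorded in the statement.

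The routine part is the first paragraph: once Proposition~\ref{prop:CyclT:avoid-k-cycle} is in hand, the passage from conjugacy to order is only the remark that order classes are unions of conjugacy classes. The step I expect to be the main obstacle is the exact arithmetic equivalence in the second paragraph. The necessity $k \mid d$ is forced, and the insertion construction works comfortably once $n \ge g(d)+k$; the delicate range is $g(d) \le n < g(d)+k$, where whether a part $k$ can be introduced without either exceeding $n$ or dropping the least common multiple below $d$ depends on the fine structure of the partitions of $n$ realising $d$. Pinning down this boundary case---equivalently, checking that the arithmetic description of $\operatorname{ord}(\pi)$ captures exactly those $d$ for which the insertion succeeds---is where the real work lies, and I would handle it by a direct analysis of the prime-power decomposition of $d$ against the available budget $n$.
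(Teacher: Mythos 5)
Your first paragraph coincides with the paper's entire proof: the paper disposes of this proposition with the single line that it follows directly from Proposition~\ref{prop:CyclT:avoid-k-cycle}, and the unwinding you give (order classes are unions of conjugacy classes; by that proposition a conjugacy class consists entirely of avoiders precisely when its cycle type has no part equal to $k$; hence $\pi \in \eqAn{\bfp}$ iff no partition of $n$ with least common multiple $\operatorname{ord}(\pi)$ contains a part $k$) is exactly what that line abbreviates. For the first description of $\eqAn{\bfp}$ you are therefore correct and on the paper's route.

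The genuine gap is in your second paragraph, and it is more serious than the boundary range $g(d) \le n < g(d)+k$ you flag: the program of ``collecting the constraints'' cannot terminate at the displayed arithmetic condition, because that condition is not equivalent to the first description. Your own insertion argument shows why. Whenever $k \mid d$ and $n \ge g(d)+k$, the partition consisting of the prime-power parts of $d$, one part $k$, and $n-g(d)-k$ fixed points has sum $n$, least common multiple $d$, and a part $k$; so for such $n$ \emph{every} order divisible by $k$ is obtainable from a permutation with a $k$-cycle, which is strictly more than the orders $k\cdot m$ with $k \nmid m$. Concretely, take $k=2$ and $n=6$: the cycle type $(4,2)$ has order $4$ and contains a $2$-cycle, so a $4$-cycle with two fixed points in $\symS_6$ is \emph{not} in $\eqAn{\bfp}$, yet $4$ cannot be written $2\cdot m$ with $2 \nmid m$. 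The correct arithmetic description is necessarily $n$-dependent, namely that $\operatorname{ord}(\pi)$ is not of the form $\operatorname{lcm}(k,e)$ with $e$ the least common multiple of a partition of $n-k$; compare Proposition~\ref{prop:ordernotfrom1cycl}, where the paper states the $k=1$ case with $n$ appearing explicitly. The paper sidesteps all of this by never arguing the second equality at all, so your instinct that this is ``where the real work lies'' is sound --- but a direct analysis of the delicate range, carried out honestly, would end by refuting the condition as stated rather than confirming it, and your write-up should not promise to derive it.
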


\begin{proof}
This follows directly from Proposition \ref{prop:CyclT:avoid-k-cycle}.
\end{proof}

\begin{proposition}
The pattern $\bfp = \biv{\nivl{1\bl2\bl3}}{\vinl{2\bl3}\bl\vinb{1}}$ gives
\begin{align*}
\eqAn{\bfp} &= \text{involutions in $\dbrac{n}$} \\
|\eqAn{\bfp}| &= 1, 2, 4, 10, 26, 76, 232, 764, 2620, \dotsc, \qquad n = 1,2,3,\dotsc
\end{align*}
which is A000085 in OEIS. This sequence has the generating function $\exp(x+x^2/2)$.
\end{proposition}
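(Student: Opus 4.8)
The plan is to reduce this to the conjugacy computation of Proposition~\ref{prop:CyclT:involutions}, just as the two preceding propositions in this section reduce to their conjugacy counterparts. The structural point is that the order relation is coarser than conjugacy: permutations with the same cycle type have the same order, so every order class is a union of conjugacy classes. Requiring that \emph{every} permutation of a given order avoid $\bfp$ is therefore at least as strong as requiring it of every conjugate, and I would first record the consequent inclusion of $\eqAn{\bfp}$ into the corresponding set for the conjugacy relation, which Proposition~\ref{prop:CyclT:involutions} identifies with the involutions of $\symS_n$. This gives $\eqAn{\bfp} \subseteq \{\text{involutions}\}$ immediately.

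For the reverse inclusion I would invoke the elementary fact that a permutation has order at most $2$ if and only if it is an involution, equivalently its cycle type has all parts in $\{1,2\}$. Hence the order class of an involution consists \emph{entirely} of involutions (orders $1$ and $2$ being the only ones that occur), and by Proposition~\ref{prop:CyclT:involutions} each such permutation avoids $\bfp$; so every involution lies in $\eqAn{\bfp}$. To make the exclusion of the remaining permutations concrete I would briefly unwind the pattern: decoding $\bfp = (231,\{0,1\},\{0,1,2\})$ forces the occurrence to use the values $\{1,2,3\}$ in the first two positions, so $\pi$ matches $\bfp$ exactly when $\pi_1 = 2$ and $\pi_2 = 3$. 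A permutation of order $d \geq 3$ must contain a cycle of length $\geq 3$, and Proposition~\ref{prop:CyclT:involutions} guarantees a permutation of the same cycle type beginning $23\cdots$; having the same order, it keeps $\pi$ out of $\eqAn{\bfp}$.

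I do not anticipate a real obstacle: the whole argument turns on the clean equivalence between having order at most $2$ and being an involution, which is exactly what prevents the coarser order relation from either admitting spurious low-order permutations or forcing a fresh analysis of the pattern itself. The enumeration then follows at once, since the involutions in $\symS_n$ are counted by A000085, with exponential generating function $\exp(x + x^2/2)$.
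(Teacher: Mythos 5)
Your proof is correct and matches the paper's approach: the paper's own proof is the one-line reduction ``this follows directly from Proposition~\ref{prop:CyclT:involutions},'' and your argument simply fills in the implicit details (order classes are unions of conjugacy classes, order $\leq 2$ characterizes involutions, and order $\geq 3$ forces a cycle of length $\geq 3$). Your decoding of the pattern as matching exactly when $\pi_1 = 2$ and $\pi_2 = 3$ is also accurate.
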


\begin{proof}
This follows directly from Proposition \ref{prop:CyclT:involutions}.
\end{proof}

This generalizes in a similar way we saw with Proposition \ref{prop:ordernotfrom1cycl}
above, but we omit it.

%

\section{Knuth-equivalence} \label{sec:Knuth}

\begin{definition} \label{def:Knuth-eq}
Let $A$ be an alphabet with an ordering.
\begin{enumerate}
\item An \emph{elementary Knuth-transformation} on a word with letters from
$A$ applies one of the transformations below, or their
inverses, to three consecutive letters in the word.
\begin{itemize}
\item[K1] $yzx \mapsto yxz$ if $x < y \leq z$,
\item[K2] $xzy \mapsto zxy$ if $x \leq y < z$.
\end{itemize}
\item Two words $w$ and $w'$ are said to be \emph{Knuth-equivalent} if they can be changed into
each other by a sequence of elementary Knuth-transformations. We write $w \equiv w'$ if this is the case.
\end{enumerate}
\end{definition}

\noindent
In this paper we will only consider the alphabet $A = \NN = \{1,2,3, \dots \}$
so K1 means we can interchange $zx$ if the next letter to the left fits between them;
and K2 means that we can interchange $xz$
if the next letter to the right fits between them. E.g., $24135 \equiv 21435 \equiv 21453$.

As is shown in \cite{MR1464693}, two permutations $\pi$ and $\mu$ are Knuth-equivlent
if and only if they have the same insertion tableau. Below we will denote the
insertion tableau of $\pi$ with $P(\pi)$ and the recording tableau with $Q(\pi)$.

\subsection{Symmetry}

It is easy to see from Definition \ref{def:Knuth-eq} that if $\rma = \rmr$ or $\rma = \rmc$
then $\pi \sim \mu$ if and only if $\pi^\rma \sim \mu^\rma$. Then Proposition \ref{prop:basic}
implies the following theorem.

\begin{theorem} \label{thm:knuthymmetry}
Let $\rma$ be a composition of elements from the set
$\{\rmr, \rmc\}$. Then
$\rma: \eqAn{\bfp} \to \eqAn{\bfp^\rma}$ is a bijection for all $n$.
\end{theorem}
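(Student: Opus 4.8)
The plan is to reduce the statement to the single fact recorded just above it---that reverse and complement each respect Knuth-equivalence---and then to invoke Proposition~\ref{prop:basic}(\ref{prop:basic-3}). That proposition, applied with $\sim$ taken to be Knuth-equivalence, turns the biconditional $\pi \sim \mu$ if and only if $\pi^\rma \sim \mu^\rma$ directly into the assertion that $\rma \colon \eqAn{\bfp} \to \eqAn{\bfp^\rma}$ is a bijection. So the entire content of the theorem lies in verifying this biconditional for every $\rma$ that is a composition of $\rmr$'s and $\rmc$'s.

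First I would observe that it suffices to treat the two generators $\rma = \rmr$ and $\rma = \rmc$. Indeed, if operations $s$ and $t$ each satisfy $\pi \sim \mu \iff \pi^{s} \sim \mu^{s}$, then so does the composite $ts$, since $\pi \sim \mu \iff \pi^{s} \sim \mu^{s} \iff (\pi^{s})^{t} \sim (\mu^{s})^{t}$; as $\rmr$ and $\rmc$ are involutions, a short induction on the number of factors in $\rma$ then covers all compositions.

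Next, since Knuth-equivalence is by Definition~\ref{def:Knuth-eq} generated by the elementary transformations K1 and K2 and their inverses, and since $\rmr$ and $\rmc$ are bijections of $\symS_n$, it is enough to show that applying reverse (respectively complement) to a single elementary Knuth move again produces a single elementary Knuth move or its inverse. A chain of moves relating $\pi$ to $\mu$ then maps termwise to a chain relating $\pi^\rma$ to $\mu^\rma$, proving the forward implication; the converse follows by applying the same implication to $\pi^\rma, \mu^\rma$ and using that $\rma$ is an involution, so $(\pi^\rma)^\rma = \pi$. The key simplification is that these moves are only ever applied inside a permutation, so the three consecutive letters are distinct; the weak inequalities $x < y \le z$ of K1 and $x \le y < z$ of K2 therefore both collapse to $x < y < z$. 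With strict inequalities the check is immediate: reversing the three-letter factor of K1, $yzx \mapsto yxz$, yields $xzy \mapsto zxy$, which is exactly K2 (and symmetrically K2 reverses to K1), so $\rmr$ interchanges the two move types; while complementing the letters sends $x<y<z$ to $\bar x > \bar y > \bar z$ without moving positions, turning K1 into the inverse of K1 and K2 into the inverse of K2.

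The argument has no genuine obstacle. The only points that require care are the passage from the weak inequalities in the definition of the elementary moves to the strict inequalities valid on permutations, and the bookkeeping that reverse acts on a three-letter factor simply by reversing it in place---so that the reversal of the surrounding word is irrelevant to whether a given move is legal. Once these are in hand, Proposition~\ref{prop:basic}(\ref{prop:basic-3}) delivers the bijection for all $n$.
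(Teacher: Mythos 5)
Your proof is correct and follows exactly the route the paper takes: the paper simply remarks that it is easy to see from Definition~\ref{def:Knuth-eq} that $\pi \sim \mu$ if and only if $\pi^\rma \sim \mu^\rma$ for $\rma \in \{\rmr, \rmc\}$ and then invokes Proposition~\ref{prop:basic}. You have merely supplied the details of that remark (reverse swaps K1 and K2, complement inverts each move), and your checks are accurate.
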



\subsection{Number of classes}

The number of equivalence classes in $\symS_n$ is
\[
1, 2, 4, 10, 26, 76, 232, 764, 2620\dotsc \qquad n = 1, \dotsc.
\]
This is A000085: number of Young tableaux with $n$ cells.

\subsection{Pattern avoidance}

Here we extend the relation to patterns by saying that two patterns
$(p,X,Y)$, $(p',X',Y')$ are equivalent if and only if $p$ and $p'$
are equivalent, and $X = X'$, $Y = Y'$.

\begin{proposition} \label{knuthprop:stabl}
For $k \geq 1$ we have that the pattern $\bfp = 12 \dotsm k$ satisfies
\begin{align*}
\eqAn{\bfp} &= \symA_n(\bfp).
\end{align*}
The members of these sets are permutations whose longest increasing subsequence is of length less than
$k$. For $n = 1,\dotsc 7$ we get the sequences A000004 (zero sequence),
A000012 (the $1$'s sequence), A000108 (the Catalan numbers), A005802 (number of vexillary
permutations), A047889, A047890, A052399. These sequences are studied in \cite{MR1788170}.
\end{proposition}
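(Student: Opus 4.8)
The plan is to prove the two assertions in order. First I would establish that $\eqAn{\bfp} = \symA_n(\bfp)$ for the classical pattern $\bfp = 12\dotsm k$, which amounts to showing that the property of avoiding an increasing pattern of length $k$ is constant on Knuth classes. The key observation is that $\pi$ matches $12\dotsm k$ if and only if $\pi$ has an increasing subsequence of length $k$, and by the theorem of Schensted the length of the longest increasing subsequence of $\pi$ equals the length of the first row of the insertion tableau $P(\pi)$. Since Knuth-equivalent permutations have the same insertion tableau, this length is a Knuth-class invariant. Therefore $\pi$ avoids $12\dotsm k$ if and only if every Knuth-equivalent permutation does, which is exactly the statement that $\eqAn{\bfp} = \symA_n(\bfp)$.

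Next I would identify the members of these sets. By the equivalence just established, $\pi$ lies in $\eqAn{\bfp}$ precisely when $\pi$ avoids $12\dotsm k$, i.e., precisely when $\pi$ has no increasing subsequence of length $k$, equivalently its longest increasing subsequence has length at most $k-1$ (strictly less than $k$). This is the desired characterization, so the second sentence of the statement follows immediately from Schensted's theorem once the first is in place.

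Finally, for the enumeration I would invoke the known counting results for permutations with bounded longest increasing subsequence. The number of $\pi \in \symS_n$ avoiding $12\dotsm k$ is well studied; by the Robinson--Schensted correspondence it equals $\sum_\lambda (f^\lambda)^2$, summed over partitions $\lambda$ of $n$ with at most $k-1$ columns (equivalently first row of length $< k$), where $f^\lambda$ is the number of standard Young tableaux of shape $\lambda$. Specializing $k = 1,\dotsc,7$ reproduces the OEIS sequences listed (A000004, A000012, A000108, A005802, A047889, A047890, A052399), and these are exactly the sequences treated in \cite{MR1788170}, to which I would refer for the identification of each initial segment.

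I expect the only genuine content to be the first step, and even there the main obstacle is merely correctly citing and applying Schensted's theorem on the first row of $P(\pi)$; the rest is bookkeeping and reference to known enumerations. One subtlety to handle carefully is the boundary value $k = 1$, where $12\dotsm k$ is the single-letter pattern, every nonempty permutation matches it, and so $\eqAn{\bfp} = \symA_n(\bfp) = \varnothing$, matching the zero sequence A000004.
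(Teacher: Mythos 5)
Your proposal is correct and takes essentially the same route as the paper: the nontrivial inclusion is proved exactly as you do, by Schensted's theorem identifying the longest increasing subsequence length with the first-row length of $P(\pi)$, which is constant on Knuth classes, so avoidance of $12\dotsm k$ is a class invariant. (Your write-up is in fact slightly more careful: the paper states the first-row bound as $\leq k$ where it should be $< k$, a typo your boundary discussion avoids.)
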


\begin{proof}
We always have $\eqAn{\bfp} \subseteq \symA_n(\bfp)$ so take a permutation
$\pi \in \symA_n(\bfp)$. This implies that the first row of the tableau
$P(\pi)$ has length $\leq k$. The same is then true for every equivalent permutation
which implies that every equivalent permutation also avoids the pattern.
\end{proof}

\begin{corollary} \label{cor:knuth-first-stable}
The pattern $\bfp = 12 \dotsm k$ is stable for $k \geq 1$.
\end{corollary}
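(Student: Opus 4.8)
The plan is to reduce the corollary to the equality already proved in Proposition~\ref{knuthprop:stabl} by showing that the Knuth-equivalence class of the pattern $\bfp = 12\dotsm k$ is as small as it can be, namely the singleton $\{\bfp\}$. Recall that $\bfp$ is stable precisely when $\eqAn{\bfp} = \Aeqn{\bfp}$, where $\Aeqn{\bfp} = \symA_n(\widetilde{\bfp})$ is the set of permutations avoiding $\bfp$ together with every pattern equivalent to it. Since $\bfp$ is classical, its sets $X$ and $Y$ are empty, so under the relation on patterns fixed at the start of this subsection the patterns equivalent to $\bfp$ are exactly the triples $(p', \emptyset, \emptyset)$ with $p'$ Knuth-equivalent to $12\dotsm k$. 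Thus the entire question becomes: which permutations of $\{1, \dotsc, k\}$ are Knuth-equivalent to $12\dotsm k$?

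First I would argue that the answer is $12\dotsm k$ alone. The cleanest route uses only Definition~\ref{def:Knuth-eq}: testing the elementary transformations K1 and K2 and their inverses against any three consecutive letters $a < b < c$ of the strictly increasing word $12\dotsm k$, one checks that each of the four possible rewrites requires a strict inequality (such as $c < a$ or $c < b$) that an increasing triple never satisfies. Hence no elementary Knuth-transformation applies anywhere in $12\dotsm k$, and the word is Knuth-equivalent only to itself. Equivalently, one may invoke the insertion-tableau description recalled above: $P(12\dotsm k)$ is the single row $1\,2\,\dotsm\,k$, and since the number of rows of $P(\pi)$ equals the length of a longest decreasing subsequence of $\pi$, a one-row tableau forces $\pi$ to be increasing, i.e.\ $\pi = 12\dotsm k$.

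With the class identified as the singleton $\{\bfp\}$, the conclusion is immediate: there is only one pattern to avoid, so $\Aeqn{\bfp} = \symA_n(\bfp)$, while Proposition~\ref{knuthprop:stabl} already gives $\eqAn{\bfp} = \symA_n(\bfp)$; chaining these equalities yields $\eqAn{\bfp} = \Aeqn{\bfp}$, which is exactly stability. The only non-formal point in the argument, and therefore the step I would treat with care, is the claim that $12\dotsm k$ is Knuth-equivalent to no other permutation; everything after that is bookkeeping on top of Proposition~\ref{knuthprop:stabl}. I expect no genuine obstacle there, as the verification against K1, K2 and their inverses is a finite, elementary case check, but it is the one place where the specific structure of the increasing pattern is actually used.
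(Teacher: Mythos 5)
Your proposal is correct and follows exactly the paper's route: the paper's own proof simply asserts $\widetilde{\bfp} = \{\bfp\}$ and cites Proposition~\ref{knuthprop:stabl}, which is precisely your reduction. The only difference is that you supply the (correct) verification that no elementary Knuth-transformation applies to the increasing word $12\dotsm k$ --- equivalently, that a one-row insertion tableau forces an increasing permutation --- a step the paper leaves implicit.
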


\begin{proof}
Since $\tilde{\bfp} = \{\bfp\}$ for these patterns the result follows from
Proposition \ref{knuthprop:stabl}.
\end{proof}

\begin{proposition}
For $k \geq 1$ we have that
\[
\eqAn{\vin{12 \dotsm k}} = \eqAn{12 \dotsm k}.
\]
This implies that the patterns $\vin{12 \dotsm k}$ are not stable,
unless $k = 1,2$.
\end{proposition}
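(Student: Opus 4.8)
The plan is to establish the displayed equality by a double inclusion, using the insertion tableau, and then to read off non-stability from Proposition \ref{knuthprop:stabl} together with the descriptions of the two sides.

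One inclusion is immediate. Every occurrence of the vincular pattern $\vin{12 \dotsm k}$ is in particular an occurrence of the classical pattern $12 \dotsm k$, so a permutation avoiding $12 \dotsm k$ also avoids $\vin{12 \dotsm k}$. Applying this to every member of a Knuth class gives $\eqAn{12 \dotsm k} \subseteq \eqAn{\vin{12 \dotsm k}}$.

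For the reverse inclusion I would argue by contraposition through the tableau. Recall that the length of the first row of $P(\pi)$ equals the length of the longest increasing subsequence of $\pi$ (Schensted), and that this number is constant on a Knuth class. Suppose $\pi \notin \eqAn{12 \dotsm k}$. By Proposition \ref{knuthprop:stabl} we have $\eqAn{12 \dotsm k} = \symA_n(12 \dotsm k)$, so $\pi$ matches $12 \dotsm k$ and the first row of $P(\pi)$ has some length $m \geq k$. Let $w$ be the row reading word of $P(\pi)$ (the rows read from bottom to top, left to right). Then $P(w) = P(\pi)$, so $w$ is Knuth-equivalent to $\pi$, and the last $m$ letters of $w$ are precisely the first row of $P(\pi)$, which is increasing and occupies consecutive positions. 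Hence $w$ has an increasing run of length $m \geq k$ and matches $\vin{12 \dotsm k}$, so $\pi \notin \eqAn{\vin{12 \dotsm k}}$. This yields $\eqAn{\vin{12 \dotsm k}} \subseteq \eqAn{12 \dotsm k}$, and hence the equality. The step I expect to be the main obstacle is exhibiting, inside the Knuth class of $\pi$, a permutation with a long increasing run in consecutive positions; the row reading word of $P(\pi)$ resolves this because its longest (first) row is read last.

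For non-stability, note first that $12 \dotsm k$ has a single-row insertion tableau, so its Knuth class is the singleton $\{12 \dotsm k\}$; consequently the pattern class of $\vin{12 \dotsm k}$ is also a singleton and $\Aeqn{\vin{12 \dotsm k}} = \symA_n(\vin{12 \dotsm k})$. Thus $\vin{12 \dotsm k}$ is stable if and only if $\eqAn{\vin{12 \dotsm k}} = \symA_n(\vin{12 \dotsm k})$, which by the equality above and Proposition \ref{knuthprop:stabl} means $\symA_n(12 \dotsm k) = \symA_n(\vin{12 \dotsm k})$ for all $n$. For $k = 1, 2$ these two avoidance sets coincide (for $k = 2$ each consists only of the decreasing permutation), so the pattern is stable. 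For $k \geq 3$ they differ: in $\symS_{k+1}$ interchange the entries $j$ and $j+1$ of the identity with $2 \leq j \leq k-1$, giving $1\,2\,\dotsm\,(j-1)\,(j+1)\,j\,(j+2)\,\dotsm\,(k+1)$, whose longest increasing subsequence has length $k$ (so it matches $12 \dotsm k$) but whose consecutive increasing runs have lengths $j$ and $k-j+1$, both $< k$ (so it avoids $\vin{12 \dotsm k}$); for $k = 3$ this is $1324$. Hence $\vin{12 \dotsm k}$ is not stable unless $k = 1, 2$.
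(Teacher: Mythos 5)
Your proof is correct and takes essentially the same approach as the paper: both reduce the nontrivial inclusion to the fact that the first row of $P(\pi)$ has length $m \geq k$ and then exhibit a Knuth-equivalent permutation containing that row as a consecutive increasing run --- the paper via inverse RSK applied to $(P(\pi),Q)$ with $Q$ filled in reading order (placing the run at the start), you via the row reading word of $P(\pi)$ (placing it at the end). Your explicit verification of non-stability for $k \geq 3$ (the swap permutations, e.g.\ $1324$ for $k=3$, which avoid $\vin{1\bl2\dotsm k}$ but match $12\dotsm k$) is sound and fills in a step the paper leaves to the reader.
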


\begin{proof}
We obviously have $\eqAn{\vin{12 \dotsm k}} \supseteq \eqAn{12 \dotsm k}$,
so assume $\pi$ is a permutation that is not in the set on the right.
This means that the first row of $P(\pi)$ is of length $\geq k$. To construct
an equivalent permutation that contains the consecutive pattern, apply the
inverse RSK-correspondance to the tableaux-pair $(P(\pi),Q)$ where $Q$ is
filled in trivially (by reading from left to right, the top row first).
\end{proof}

\begin{proposition}
For $k \geq 1$ we have that the pattern $\bfp = \biv{\nivls{1\bl2\dotsm k}}{\vinbs{1\bl2\dotsm k}}$
gives
\begin{align*}
\eqAn{\bfp} &= \text{ permutations whose insertion tableu does not start with $\bfp$}
\end{align*}
these are permutations whose longest increasing subsequence is of length less than
$k$.
\end{proposition}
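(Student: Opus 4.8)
The plan is to show that whether a permutation matches $\bfp$ depends only on its insertion tableau, so that each Knuth-class is uniformly matching or uniformly avoiding; the description of $\eqAn{\bfp}$ then follows at once. First I would unwind the bivincular data. Here $\bfp = (12\cdots k,\, \tom,\, \{0,1,\dots,k-1\})$: the bars along the top line force $j_{y+1} = j_y+1$ for $y = 1,\dots,k-1$, and with the convention $j_0 = 0$ the leftmost bar ($0 \in Y$) forces $j_1 = 1$, while $X = \tom$ imposes nothing on positions. Thus an occurrence of $\bfp$ uses exactly the values $1,2,\dots,k$ in increasing order. Since each of these values occurs once in $\pi$, this means $\pi$ matches $\bfp$ if and only if the letters $1,2,\dots,k$ appear in increasing order of position, that is $\pi^{-1}(1) < \pi^{-1}(2) < \cdots < \pi^{-1}(k)$.

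The key step is the lemma that $\pi$ matches $\bfp$ if and only if the first row of $P(\pi)$ begins with $1,2,\dots,k$. For the forward direction I would argue directly and inductively with row insertion. Reading $\pi$ left to right, the letters $1,\dots,k$ are then inserted in increasing order of value. Assuming inductively that when $i\le k$ is inserted the letters $1,\dots,i-1$ already occupy cells $(1,1),\dots,(1,i-1)$, the entries to their right in the first row are all larger than $i$, so $i$ comes to rest in cell $(1,i)$; moreover no later insertion can displace it, since only a letter smaller than $i$ could bump it from the first row and all such letters are already placed. Hence $1,\dots,k$ occupy $(1,1),\dots,(1,k)$.

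For the converse I would invoke a standard property of RSK: inserting a letter exceeding $k$ never bumps a letter $\le k$, so the cells occupied by the entries $1,\dots,k$ in $P(\pi)$ coincide with the tableau $P(\sigma)$ produced by inserting only the subword $\sigma$ of $\pi$ consisting of the letters $\le k$. If $1,\dots,k$ are not in increasing positional order then $\sigma \neq 12\cdots k$, so $P(\sigma)$ is not a single row and some letter $\le k$ sits in row $\ge 2$; consequently the first row of $P(\pi)$ fails to begin with $1,\dots,k$. Together the two directions prove the lemma.

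Finally, since $P(\pi)$ is a complete invariant of the Knuth-class of $\pi$, the lemma shows that matching $\bfp$ is constant on each class: a class either matches $\bfp$ throughout or avoids it throughout. Hence $\pi \in \eqAn{\bfp}$ precisely when its class avoids $\bfp$, i.e.\ precisely when the first row of $P(\pi)$ does not begin with $1,2,\dots,k$, which is the stated tableau description. For the reformulation one uses that the length of the first row of $P(\pi)$ equals the length of the longest increasing subsequence of $\pi$, so that a permutation excluded from $\eqAn{\bfp}$ has first row beginning $1,\dots,k$ and in particular longest increasing subsequence at least $k$. I expect the main obstacle to be the converse half of the key lemma, namely making the RSK restriction property precise: verifying carefully that insertions of letters larger than $k$ leave the sub-tableau on $\{1,\dots,k\}$ untouched, so that it genuinely computes $P(\sigma)$.
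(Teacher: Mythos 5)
Your proof is correct, but it takes a genuinely different route from the paper's. The paper disposes of this proposition with \lqq similar to the proof above\rqq, i.e.\ by the method of the preceding proposition: the one nontrivial direction is handled by applying inverse RSK to the pair $(P(\pi),Q)$ with $Q$ filled in trivially, thereby exhibiting a \emph{single} Knuth-equivalent witness containing the pattern whenever the first row of $P(\pi)$ begins $1,2,\dots,k$. You instead prove the sharper statement that matching $\bfp$ is itself a function of $P(\pi)$ alone: after correctly unwinding the bivincular data ($Y=\{0,1,\dots,k-1\}$ forces an occurrence to consist of exactly the values $1,\dots,k$ in increasing positional order), your key lemma, proved via the standard restriction property of row insertion (a bumping chain started by a letter exceeding $k$ never disturbs an entry $\le k$, so the entries $\le k$ of $P(\pi)$ form $P(\sigma)$ for the subword $\sigma$ of letters $\le k$), shows that $\pi$ matches $\bfp$ if and only if the first row of $P(\pi)$ begins $1,\dots,k$. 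This buys something the paper's witness construction does not state explicitly: every Knuth class matches or avoids $\bfp$ uniformly, so in fact $\eqAn{\bfp}=\symA_n(\bfp)$, and the count $n!-n!/k!$ conjectured in the paper's subsequent Open Problem falls out immediately, since avoidance of $\bfp$ means exactly that $1,\dots,k$ fail to occur in increasing order of position. A final point in your favour: you prudently claim only one inclusion for the clause about longest increasing subsequences, and rightly so, since as stated in the paper that clause is only an inclusion rather than an equality --- for $k=2$ the permutation $213$ lies in $\eqAn{\bfp}$ (the first row of its insertion tableau is $1\,3$) yet contains an increasing subsequence of length $2$; so permutations with longest increasing subsequence shorter than $k$ form a proper subset of $\eqAn{\bfp}$ once $n\ge k\ge 2$, and your hedged formulation is the defensible one.
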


\begin{proof}
Similar to the proof above.
\end{proof}

\begin{openproblem}
The count for the permutations above seems to be given by $n! - \frac{n!}{k!}$ for $k \geq n$.
Prove this.
\end{openproblem}

\begin{proposition} \label{knuthprop:stabl2}
For $k \geq 1$ we have that the pattern $\bfp = \biv{\nivs{1\bl2\dotsm k}}{\vinbs{1\bl2\dotsm k}}$ satisfies
\begin{align*}
\eqAn{\bfp} &= \symA_n(\bfp).
\end{align*}
For $n = 1, \dotsc, 7$ we get the sequences A000004 (zero sequence),
A000012 (the $1$'s sequence), A049774, A117158, A177523, A177533, A177553.
These sequences have been studied in by \cite{Hardin}
\end{proposition}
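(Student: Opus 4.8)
The plan is to leverage the automatic inclusion $\eqAn{\bfp} \subseteq \symA_n(\bfp)$ (every permutation is Knuth-equivalent to itself), so that the entire content is the reverse inclusion. That inclusion is equivalent to the statement that \emph{matching $\bfp$ is constant on each Knuth-class}: granting this, a permutation avoiding $\bfp$ forces every Knuth-equivalent permutation to avoid $\bfp$ as well, which is precisely $\symA_n(\bfp) \subseteq \eqAn{\bfp}$. I would therefore reduce the whole proposition to showing that whether $\pi$ matches $\bfp$ depends only on the insertion tableau $P(\pi)$.

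The second step is to unwind the bivincular data. Here $\bfp = (12\dotsm k,\, X = \tom,\, Y = \{1,\dotsc,k-1\})$: there are no position bars, while the single top bar over $1\,2\,\dotsm\,k$ forces the $k$ chosen values to be consecutive (and, since $0,k \notin Y$, no value is pinned to the global minimum or maximum). As the underlying permutation is $12\dotsm k$, an occurrence is thus a block of consecutive values $v, v+1, \dotsc, v+k-1$ appearing in increasing order of position, i.e. $\pi^{-1}(v) < \pi^{-1}(v+1) < \dotsm < \pi^{-1}(v+k-1)$ (equivalently, $\pi^{-1}$ contains the consecutive pattern $\vin{12 \dotsm k}$, which is what ties the enumerations in the statement to the consecutive-monotone-pattern sequences). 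Now $\pi^{-1}(w) < \pi^{-1}(w+1)$ exactly when $w \notin \mathrm{Des}(\pi^{-1})$, so $\pi$ matches $\bfp$ if and only if $\mathrm{Des}(\pi^{-1})$ omits some block $\{v,v+1,\dotsc,v+k-2\}$ with $1 \le v \le n-k+1$. In other words, matching $\bfp$ is governed entirely by the single datum $\mathrm{Des}(\pi^{-1}) \subseteq \{1,\dotsc,n-1\}$.

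The final ingredient is the classical RSK identity $\mathrm{Des}(\pi^{-1}) = \mathrm{Des}(P(\pi))$ (from $\mathrm{Des}(\pi) = \mathrm{Des}(Q(\pi))$ together with $P(\pi) = Q(\pi^{-1})$), where the descent set of a standard tableau records those $i$ with $i+1$ in a strictly lower row; this is available from \cite{MR1464693}. Since Knuth-equivalent permutations share $P$, they share $\mathrm{Des}(\pi^{-1})$ and hence, by the previous paragraph, agree on whether they match $\bfp$. This is the desired Knuth-invariance and completes the argument.

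I expect the only genuine obstacle to be the bookkeeping of the middle step: reading off $Y = \{1,\dotsc,k-1\}$ correctly from the top bar, confirming that the endpoints $0$ and $k$ are absent so that no boundary constraint on values intervenes, and fixing the descent convention so that ``$v$ precedes $v+1$'' matches ``$v \notin \mathrm{Des}(\pi^{-1})$''. Once the pattern is recast as a condition on $\mathrm{Des}(\pi^{-1})$, the conclusion is immediate from the tableau identity; structurally this is the descent-set analogue of the first-row (Schensted) argument used for Proposition~\ref{knuthprop:stabl}.
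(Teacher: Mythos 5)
Your proof is correct, and it follows the same overall architecture as the paper's: the inclusion $\eqAn{\bfp} \subseteq \symA_n(\bfp)$ is automatic, and everything reduces to showing that matching $\bfp$ is an invariant of the insertion tableau, hence constant on Knuth classes. Where you genuinely differ is in the key invariant. The paper asserts that if $\pi'$ matches $\bfp$ then \lqq there is a row of $P(\pi')$ that contains the sequence $\ell, \ell+1, \dots, \ell+k-1$ in adjacent boxes\rqq, and transfers this row condition back to $\pi$ via $P(\pi)=P(\pi')$. That intermediate claim is not literally correct in the forward direction: already for $k=2$ the permutation $\pi'=213$ matches $\bfp$ (the consecutive values $2,3$ occur in increasing order of position), yet $P(213)$ has rows $(1\,3)$ and $(2)$, so no row contains two consecutive values. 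The correct tableau statement is only that the consecutive values climb weakly upward through the rows, i.e. that $\{\ell,\dotsc,\ell+k-2\}$ is disjoint from the descent set of $P$. Your formulation --- an occurrence of $\bfp$ is exactly a block $\{v,\dotsc,v+k-2\}$ missing from $\mathrm{Des}(\pi^{-1})$, combined with the classical identity $\mathrm{Des}(\pi^{-1}) = \mathrm{Des}(P(\pi))$ coming from $\mathrm{Des}(\pi)=\mathrm{Des}(Q(\pi))$ and $P(\pi)=Q(\pi^{-1})$ --- is precisely the rigorous replacement: it repairs this imprecision while yielding the same conclusion, and it has the side benefit of making the link to the enumeration sequences transparent, since the condition says exactly that $\pi^{-1}$ contains the consecutive pattern $\vin{1\bl2\dotsm k}$. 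So: same strategy as the paper, but your key lemma is the careful version of the step the paper states too strongly.
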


\begin{proof}
We obviously have $\eqAn{\bfp} \subseteq \symA_n(\bfp)$,
so assume $\pi$ is a permutation that is not in the set on the left.
Let $\pi'$ be an equivalent permutation that contains the pattern.
This means that there is a row of $P(\pi')$ that contains the
sequence $\ell, \ell+1, \dots, \ell + k-1$ in adjacent boxes.
Since $P(\pi) = P(\pi')$ we see that $\pi$ must also contain the pattern.
\end{proof}

\begin{corollary} \label{cor:knuth-second-stable}
The pattern $\bfp = \biv{\nivs{1\bl2\dotsm k}}{\vinbs{1\bl2\dotsm k}}$ 
is stable for $k \geq 1$.
\end{corollary}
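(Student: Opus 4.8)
The plan is to mirror the proof of Corollary~\ref{cor:knuth-first-stable} almost verbatim; the only thing to verify is that the Knuth-equivalence class $\tilde{\bfp}$ of this bivincular pattern is a singleton, after which Proposition~\ref{knuthprop:stabl2} does the rest. Recall that $\bfp$ is \emph{stable} exactly when $\eqAn{\bfp} = \Aeqn{\bfp}$, and that $\Aeqn{\bfp}$ is the set of permutations avoiding \emph{every} pattern $\bfq \sim \bfp$. Since, by the extension of Knuth-equivalence to patterns, two bivincular patterns $(p,X,Y)$ and $(p',X',Y')$ are equivalent precisely when $p$ and $p'$ are Knuth-equivalent and $X=X'$, $Y=Y'$, the first step is to identify all permutations Knuth-equivalent to the underlying permutation of $\bfp$.

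The underlying permutation of $\bfp = \biv{\nivs{1\bl2\dotsm k}}{\vinbs{1\bl2\dotsm k}}$ is the increasing permutation $\id = 12\dotsm k$, whose insertion tableau $P(\id)$ is the single row $1\,2\dotsm k$. I would then note that the increasing permutation is the \emph{only} permutation whose insertion tableau is a single row of length $k$: a one-row shape means the longest decreasing subsequence has length $1$, forcing the permutation to be increasing, and there is a unique increasing permutation of $\{1,\dotsc,k\}$. Hence the Knuth-class of $12\dotsm k$ is $\{12\dotsm k\}$, and since the bivincular data $X$ and $Y$ are fixed, the pattern-class collapses to $\tilde{\bfp} = \{\bfp\}$.

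Once $\tilde{\bfp} = \{\bfp\}$ is established, avoiding every equivalent pattern is the same as avoiding $\bfp$ alone, so $\Aeqn{\bfp} = \symA_n(\bfp)$. Proposition~\ref{knuthprop:stabl2} already supplies $\eqAn{\bfp} = \symA_n(\bfp)$, and chaining the two gives $\eqAn{\bfp} = \symA_n(\bfp) = \Aeqn{\bfp}$, which is precisely stability.

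I expect no genuine obstacle here: the whole argument reduces to the elementary fact that the increasing permutation is alone in its Knuth-class, exactly the mechanism that made $12\dotsm k$ stable in Corollary~\ref{cor:knuth-first-stable}. The one point deserving explicit care is the characterization of the single-row insertion tableau, since it is what guarantees that $\tilde{\bfp}$ contains no further patterns that could enlarge $\Aeqn{\bfp}$ beyond $\symA_n(\bfp)$.
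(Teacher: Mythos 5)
Your proposal is correct and takes essentially the same route as the paper, whose entire proof is the remark that $\tilde{\bfp} = \{\bfp\}$ for these patterns together with an appeal to Proposition \ref{knuthprop:stabl2}. The only difference is that you explicitly justify the singleton Knuth class (a one-row insertion tableau forces the increasing permutation, which admits a unique recording tableau), a detail the paper leaves unstated.
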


\begin{proof}
Since $\tilde{\bfp} = \{\bfp\}$ for these patterns the result follows from
Proposition \ref{knuthprop:stabl2}.
\end{proof}

\begin{proposition} \label{prop:orig}
The pattern $\bfp = 231$ gives
\begin{align*}
\eqAn{\bfp} &= \text{permutations with hook-shaped insertion tableaux, filled in trivially} \\
|\eqAn{\bfp}| &= 1, 2, 4, 8, 16, 32, 64, 128, 256 \dotsc, \qquad
n = 1,2,3,\dotsc
\end{align*}
which is 
A000079: $2^{n-1}$.
\end{proposition}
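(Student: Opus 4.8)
The plan is to exploit the fact that, by construction, $\eqAn{231}$ is a union of Knuth classes: a permutation lies in it precisely when its whole class avoids $231$, and since the Knuth class of $\pi$ is exactly the RSK-fibre $\{\sigma : P(\sigma) = P(\pi)\}$, membership depends only on the insertion tableau $P(\pi)$. So the task splits into (i) deciding for which tableaux $T$ every permutation with $P(\sigma) = T$ avoids $231$, and (ii) counting the permutations with such a $T$. First I would record that the Knuth class of the \emph{pattern} $231$ is $\{231, 213\}$ (a single K1 move sends $231 \mapsto 213$), so the relevant avoiders ought to be the permutations avoiding both $231$ and $213$.

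Next I would characterise $\symA_n(231,213)$ directly in one-line notation. Both $231$ and $213$ carry their median-valued letter in the leftmost position, so an occurrence of either forces some entry to have both a larger and a smaller entry to its right; hence $\pi$ avoids both iff for every $i$ the entries after position $i$ are either all larger or all smaller than $\pi_i$. Equivalently, scanning left to right, each of the first $n-1$ entries is freely chosen to be the current minimum or the current maximum of the unused values, with the last entry forced. This is a free binary choice at $n-1$ steps, so $|\symA_n(231,213)| = 2^{n-1}$, which is the asserted count.

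I would then connect these ``min/max'' permutations to tableaux through RSK: a ``minimum'' choice appends its value to the end of the first row, while a ``maximum'' choice bumps straight down the first column, so $P(\pi)$ is a hook whose first row is $1, 2, \dots, k$ and whose first column carries the remaining values in order --- exactly the ``trivially filled'' hooks of the statement. To confirm these are \emph{all} the relevant tableaux, I would compare counts: a trivially filled hook of shape $(k, 1^{n-k})$ is a single insertion tableau, paired under RSK with $f^{(k,1^{n-k})} = \binom{n-1}{k-1}$ recording tableaux, so the permutations with trivially filled hook insertion tableau number $\sum_{k=1}^{n} \binom{n-1}{k-1} = 2^{n-1}$. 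Since the $2^{n-1}$ min/max permutations all have such tableaux, the two sets coincide; in particular every permutation sharing a trivial hook is itself min/max and hence avoids $231$, which gives the inclusion $\supseteq$ for $\eqAn{231}$.

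The main obstacle is the reverse inclusion: showing that if $P(\pi)$ is \emph{not} a trivially filled hook then some $\sigma \equiv \pi$ contains $231$, so $\pi \notin \eqAn{231}$. Equivalently one must upgrade ``$\pi$ contains $213$'' to ``some $\sigma \equiv \pi$ contains $231$'', and here a single K1/K2 move does not suffice, since the three letters of an occurrence need not be adjacent. I would argue at the level of tableaux: when the shape contains the cell $(2,2)$ the row-reading word already exhibits a $231$ (a second-row pair $c<d$ followed by the corner value $1$), and when the shape is a hook with a non-trivial first row one produces the required $231$-containing preimage by reverse RSK with a suitably chosen recording tableau. Combining the two inclusions gives $\eqAn{231} = \symA_n(231,213)$, the permutations with trivially filled hook insertion tableaux, enumerated by $2^{n-1}$.
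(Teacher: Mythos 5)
Your overall architecture is sound and genuinely different from the paper's: you reduce everything to the insertion tableau (correctly, since $\eqAn{231}$ is a union of RSK fibres), characterise $\symA_n(231,213)$ as the ``min/max'' permutations, and match counts via the hook-length formula, $\sum_k \binom{n-1}{k-1} = 2^{n-1}$. The case where the shape contains the cell $(2,2)$ is also handled cleanly: the row-reading word is Knuth-equivalent to $\pi$ and exhibits $c, d, 1$ with $1 < c < d$, an occurrence of $231$. But the remaining case --- hook shape with non-trivial filling --- is exactly where the whole difficulty of the proposition lives, and your one-line plan (``reverse RSK with a suitably chosen recording tableau'') is not a proof. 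The choice genuinely matters: many natural class members of a non-trivially filled hook \emph{avoid} $231$. For instance, for $P$ with first row $1,2,3,9$ and first column $1,4,5,6,7,8$, both the reading word $8\,7\,6\,5\,4\,1\,2\,3\,9$ and the class member $1\,2\,8\,7\,6\,5\,4\,3\,9$ avoid $231$ (they only contain $213$), so you cannot wave at an unspecified preimage; you must exhibit one and verify both that its insertion tableau is $P$ and that it contains $231$. This can be done: writing the first row as $1 = a_1 < a_2 < \dots < a_k$ and the column entries as $b_2 < \dots < b_m$, non-triviality gives a minimal $i$ with $a_i > b_2$, and the word $a_1 a_2 \cdots a_{i-2}\; b_m b_{m-1} \cdots b_2\; a_i a_{i+1} \cdots a_k\; a_{i-1}$ row-inserts to exactly $P$ (the $b$'s cascade down the column, the large $a$'s append, and the final letter $a_{i-1}$ bumps $b_2$ into place) while containing the occurrence $b_2,\, a_i,\, a_{i-1}$ of $231$. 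The paper avoids this construction entirely: it proves a Knuth-move lemma instead, taking a \emph{minimal} occurrence of $213$, showing its first two letters are adjacent, and sliding the occurrence rightward by elementary transformations until it becomes an adjacent $\vin{2\bl1\bl3}$, which one further move converts to $\vin{2\bl3\bl1}$; plus a direct case analysis showing that a single elementary transformation cannot create an occurrence of $231$ in a $\{231,213\}$-avoider.

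A second, minor inaccuracy: your description of how min/max permutations insert (``a minimum choice appends to the end of the first row, a maximum choice bumps down the first column'') is not right cell-by-cell --- in $4312$ the min choice $1$ \emph{bumps} $3$, and in $1432$ the max choice $4$ \emph{appends}. The correct invariant is that at every stage $P$ is a hook whose first row is $1,2,\dots,a$ possibly followed by one ``dangling'' entry, namely the most recently inserted running maximum, with the column holding the remaining placed values in increasing order; both a min choice and a max choice bump that dangling entry down the column when it is present and append otherwise. With this invariant the induction goes through and your counting comparison then legitimately upgrades the containment to the equality ``min/max permutations $=$ permutations with trivially filled hook tableaux'', which is what the $\supseteq$ inclusion of the proposition needs.
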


Before we start proving this we make a technical definition for convenience:
\begin{definition}
Let $\bfp$ be a pattern. Given an occurrence of $\bfp$ in a permutation $\pi$
we define the \emph{area} of the occurrence as the distance from the first letter
in the occurrence to the last.
A \emph{minimal occurrence} of a pattern $\bfp$ in a permutation $\pi$ is an
occurrence of $\bfp$ with the least possible area.
\end{definition}

\begin{proof}
We begin by showing that the permutations in the set on the left are exactly
the permutations that avoid $231$ and $213$. Then \cite[Proposition 5.3]{MR2061380} implies
the result.

Now, let $\pi$ be a permutation belonging to the set on the left. We need to show that
$\pi$ avoids the pattern $213$. First note that $\pi$ must avoid the pattern $\vin{2\bl1\bl3}$
since an occurrence of $\vin{2\bl1\bl3}$ can be changed into an occurrence of $\vin{2\bl3\bl1}$ by an elementary
transformation. It therefore suffices to show that an occurrence of $213$ can be
changed into an occurrence of $\vin{2\bl1\bl3}$ by elementary transformations. Let
\[
\pi = \dotsm \check{x} \dotsm \check{y} \dotsm \check{z} \dotsm
\]
be a minimal occurrence of $213$; so $y < x < z$, every letter $w$ between
$x$ and $y$ satisfies $w > z$ and every letter $w$ between $y$ and $z$ satisfies
$y < w < x$. Now any $w$ between $x$ and $y$ with $w > z$ would give an occurrence
of $231$ so $x$ and $y$ are actually adjacent.
If $y$ and $z$ are adjacent we are done, otherwise let $w$ be the first letter to the left of
$y$.
\[
\pi = \dotsm \check{x} \check{y} w \dotsm \check{z} \dotsm.
\]
We can swap $x$ and $y$ by an elementary transformation and we still have an
occurrence of $213$
\[
\pi \equiv \dotsm y \check{x} \check{w} \dotsm \check{z} \dotsm,
\]
and the number of letters between $w$ and $z$ is now one less than the number of
letters that were between $y$ and $z$ in $\pi$. We can now perform elementary
transformations until we have gotten rid of all the letters between $x$ and $z$, but
one, and this will be an occurrence of $\vin{2\bl1\bl3}$.

Now let $\pi$ be a permutation belonging to the union on the right. We need to show that
any permutation Knuth-equivalent to $\pi$ also avoids $231$. It suffices to consider
a permutation $\rho$ that differs from $\pi$ by one elementary transformation. We must
look at two cases:
\begin{enumerate}
\item
\[
\rho = \dotsm xyz \dotsm \equiv \dotsm yxz \dotsm = \pi,
\]
with $x < z < y$. If $\rho$ has an occurrence of $231$ then it would either
have to be
\[
\rho = \dotsm \check{x} y \check{z} \dotsm \check{w} \dotsm,
\]
or
\[
\rho = \dotsm \check{x} \check{y} z \dotsm \check{w} \dotsm.
\]
The first scenario would immediately give an occurrence of $231$ in $\pi$,
which is a contradiction, and the second scenario we would have also get an occurrence
of $231$:
\[
\pi = \dotsm y \check{x} \check{z} \dotsm \check{w} \dotsm.
\]

\item
\[
\rho = \dotsm zxy \dotsm \equiv \dotsm zyx \dotsm = \pi,
\]
This case is similar to the previous one and left to the reader. \qedhere
\end{enumerate}
\end{proof}

\begin{corollary} \label{cor:knuth-S3stable}
Every classical pattern in $\symS_3$ is stable.
\end{corollary}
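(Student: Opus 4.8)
The plan is to reduce everything to the two stability results already in hand and then spread them across all of $\symS_3$ using the reverse and complement symmetries. Concretely, I would first record that under the operations $\rmr$ and $\rmc$ the six patterns of $\symS_3$ split into exactly two orbits,
\[
\{123, 321\} \qquad \text{and} \qquad \{132, 213, 231, 312\},
\]
which one checks directly: $123^\rmr = 321$, while $231^\rmr = 132$, $231^\rmc = 213$, and $213^\rmr = 312$, so repeated application of $\rmr$ and $\rmc$ starting from $231$ reaches all four patterns of the second set. Since every element of $\symS_3$ lies in one of these two orbits, it suffices to exhibit one stable representative in each orbit and then argue that stability is preserved along the orbit.

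For the representatives, the pattern $123$ is stable by Corollary \ref{cor:knuth-first-stable} (the case $k = 3$ of $12\dotsm k$), and the pattern $231$ is stable by Proposition \ref{prop:orig}. So both orbits already contain a pattern known to be stable.

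To propagate, I would invoke Proposition \ref{prop:stable-patterns} with $\rma \in \{\rmr, \rmc\}$. Its hypothesis is that $\pi \sim \mu$ if and only if $\pi^\rma \sim \mu^\rma$, both for permutations and for patterns; for Knuth-equivalence this is exactly the compatibility with reverse and complement noted just before Theorem \ref{thm:knuthymmetry}, and for classical patterns (those with $X = Y = \tom$) the pattern-level statement follows at once, since equivalence of classical patterns is just Knuth-equivalence of the underlying permutations while $\rmr$ and $\rmc$ leave $X,Y$ empty. The proposition then yields that $\bfp$ is stable if and only if $\bfp^\rma$ is stable. Applying this step by step along each orbit — starting from $123$ for the first orbit and from $231$ for the second — upgrades the two known cases to stability of all six patterns.

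I do not expect a genuine obstacle here: all the substantive work was done in Proposition \ref{prop:orig}, and what remains is the orbit bookkeeping together with a careful check that the symmetry hypotheses of Proposition \ref{prop:stable-patterns} hold at the level of patterns as well as permutations. The only point that needs a moment's care is confirming that the reverse/complement orbit of $231$ really sweeps out all of $\{132, 213, 312\}$, so that no $\symS_3$ pattern is left uncovered.
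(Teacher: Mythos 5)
Your proposal is correct and follows essentially the same route as the paper: stability of $123$ from Corollary \ref{cor:knuth-first-stable}, stability of $231$ from (the proof of) Proposition \ref{prop:orig}, and propagation to the remaining patterns via reverse and complement through Proposition \ref{prop:stable-patterns}. Your explicit orbit bookkeeping and verification of the symmetry hypothesis at the pattern level merely spell out details the paper leaves implicit.
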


\begin{proof}
The case $p = 123$ follows from Proposition \ref{cor:knuth-first-stable}
and the proof of the Proposition above implies the case $p = 231$.
By taking reverse and complement and using Proposition \ref{prop:stable-patterns}
we get the rest of $\symS_3$.
\end{proof}


We will need an obvious lemma regarding hook-shaped tableaux in the
proof of the next proposition:

\begin{lemma} \label{lem:hook-shaped}
If the tableau $P(\pi)$ is hook-shaped then in each step of it's construction when an element
is bumped from row $i$, it is smaller than the current element in row
$i+1$.
\end{lemma}

\begin{proposition} \label{prop:Graphs}
The pattern $\bfp = \biv{\nivls{1}\bl\nivbs{2\bl3}}{\vinbs{2\bl3\bl1}}$ gives
\begin{align*}
\eqAn{\bfp} = &\text{ permutations with hook-shaped insertion tableaux} \\
              &\text{ with $2$ in the first row, as well as $\id^\rmr$}, \\
|\eqAn{\bfp}| & = 1, 2, 4, 11, 36, 127, 463, 1717, 6436 \dotsc, \qquad
n = 1,2,3,\dotsc 9
\end{align*}
which is (from $n = 3$)
A112849: Number of congruence classes (epimorphisms/vertex partitionings induced by
graph endomorphisms) of undirected cycles of even length: $|C(C_2n)|$.
See \cite{MR2548551}.
\end{proposition}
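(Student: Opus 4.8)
The plan is to work throughout with the reformulation of the pattern $\bfp=\biv{\nivls{1}\bl\nivbs{2\bl3}}{\vinbs{2\bl3\bl1}}$ as the triple $(231,\tom,\{0\})$: an occurrence is a subsequence $a\,b\,1$ with $a<b$ in which the value $0\in Y$ forces the smallest of the three entries to be the global minimum, i.e.\ the literal value $1$. Hence $\mu$ matches $\bfp$ exactly when some ascent (increasing pair) occurs to the left of the value $1$, and $\mu$ \emph{avoids} $\bfp$ exactly when the entries preceding $1$ are strictly decreasing. Since membership in $\eqAn{\bfp}$ depends only on the Knuth class, hence only on $P(\mu)$, the task reduces to deciding for which insertion tableaux $P$ \emph{every} permutation with that tableau has a strictly decreasing segment before its value $1$. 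A load-bearing observation I would record first is that the value $2$ lies in the first row of $P(\mu)$ if and only if $1$ precedes $2$ in $\mu$: this is a one-line insertion check (if $1$ is inserted first it is pinned at the corner and $2$ settles permanently at $(1,2)$, whereas if $2$ comes first it is later bumped by $1$ into the first column), and it shows that ``$2$ in the first row'' is a class invariant meaning ``$1$ precedes $2$ in every member''.

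For the inclusion $\supseteq$ I would argue by contradiction. Suppose $P=P(\mu)$ is a hook with $2$ in the first row but some class member $\mu$ matches $\bfp$. Then $\mu$ has an ascent $a<b$ before its value $1$, while the invariant forces $2$ to appear after $1$; hence $a,b\ge 3$ and the four entries $a,b,1,2$ form the pattern $3412$. The crux is the sub-lemma that a hook-shaped $P$ forbids $3412$ (equivalently, that $3412$ forces a second cell in the second row), and this is exactly where Lemma \ref{lem:hook-shaped} is used: once such a configuration is inserted, the cascade-down-the-first-column behaviour guaranteed by the lemma is violated, because the later, smaller pair $1,2$ is compelled to open a second box in row $2$, so the shape cannot be a hook. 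The degenerate hook, a single column, is handled separately: its class is the singleton $\{\id^\rmr\}$, which trivially avoids $\bfp$, and this accounts for the extra element $\id^\rmr$ in the statement, since a single column does not carry $2$ in its first row.

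For the inclusion $\subseteq$ I would prove the contrapositive by producing, for each $P$ that is neither a single column nor a hook with $2$ in the first row, an explicit class member matching $\bfp$. The idea is to choose the recording tableau so as to schedule two entries forming an ascent before the value $1$: if $P$ is not a hook, the two cells $(2,1),(2,2)$ in the second row supply the required ascent, whereas if $P$ is a hook with $2$ in the first column, the value $2$ together with the arm entry at $(1,2)$ (which exceeds $2$) can be placed before $1$. Checking via reverse bumping that these recording tableaux really return the prescribed insertion tableau is the routine but fiddly part of this direction.

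Finally I would count. Permutations with a fixed hook insertion tableau of shape $(k,1^{\,n-k})$ number $\binom{n-1}{k-1}$ (the recording tableaux), and among hooks of that shape exactly $\binom{n-2}{k-2}$ carry $2$ at cell $(1,2)$; summing the product over $k$ and applying Vandermonde gives \[ \sum_{k=2}^{n}\binom{n-2}{k-2}\binom{n-1}{k-1}=\binom{2n-3}{n-2}=\tfrac12\binom{2n-2}{n-1}, \] and adding $1$ for $\id^\rmr$ yields the claimed $1+\tfrac12\binom{2n-2}{n-1}$, which matches A112849 from $n=3$ onward. I expect the main obstacle to be the forward inclusion, and specifically the insertion argument that hook shape rules out $3412$; the ``$2$ in the first row $\Leftrightarrow$ $1$ before $2$'' observation and the counting are straightforward bookkeeping once that sub-lemma is in hand.
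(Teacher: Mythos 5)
Your proposal is correct, and its skeleton coincides with the paper's: both directions are settled via the insertion tableau, Lemma \ref{lem:hook-shaped} carries the forward incompatibility, the converse is proved by exhibiting a class member containing the pattern through a chosen recording tableau, and your count $1+\sum_k\binom{n-2}{k-2}\binom{n-1}{k-1}=1+\tfrac12\binom{2n-2}{n-1}$ is the paper's computation verbatim. Two local differences are worth recording. First, you factor the forward direction through the invariant ``$2$ in the first row of $P(\mu)$ iff $1$ precedes $2$ in $\mu$'' and the sub-lemma ``a hook-shaped $P$ forbids $3412$'', whereas the paper argues directly on a matching member (if $2$ precedes $1$ it is bumped from row $1$; if $2$ follows $1$ the hook breaks by Lemma \ref{lem:hook-shaped}). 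Your packaging is cleaner, but be aware that your one-line justification of the sub-lemma hides the same subtlety the paper's terse sentence hides: insertions \emph{between} the letters of the occurrence may be what breaks the hook, and if they do not, one must check that when $2$ is inserted the $(1,2)$-occupant exceeds the $(2,1)$-entry (it is the value sitting at $(1,2)$ when $1$ arrived, hence larger than the pre-$1$ minimum, which bounds the $(2,1)$-entry); alternatively, Greene's theorem disposes of $3412$ in a hook in two lines, since a longest increasing subsequence is disjoint from one of the two increasing pairs of the occurrence, forcing $\lambda_1+\lambda_2\ge\lambda_1+2$. Second, in the case ``hook with $2$ in the first column'' the paper does not schedule a recording tableau at all: it uses elementary Knuth transformations (non-adjacent $2,1$ already give the pattern; a non-final block $21$ yields it after one swap; $\pi=\dotsm 21$ avoiding forces $\pi=\id^\rmr$), which simultaneously explains why $\id^\rmr$ is the unique exception — a fact you instead obtain from the singleton class of the single-column tableau, which is equally valid. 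Your scheduling construction for this case also works, and the reverse-bumping verification you defer can be skipped entirely by writing the witness explicitly: for the hook with first row $1,c_1<\dotsb<c_k$ and first column $1,2,r_3<\dotsb<r_m$, the permutation $r_m r_{m-1}\dotsm r_3\,2\,c_1\dotsm c_k\,1$ inserts to exactly this tableau and contains the ascent $2<c_1$ before the value $1$.
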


\begin{proof}
Assume $\pi$ is not in the set on the left, and let $\pi'$ be an
equivalent permutation that matches the pattern. We can write
\[
\pi' = \dotsm j \dotsm k \dotsm 1 \dotsm, \qquad j \leq k.
\]
We can assume that $j$ and $k$ are adjacent. If $2$ appears before
$1$ in $\pi'$ then it will be bumped from the first row (by $1$),
and then $\pi'$ and $\pi$ would not be in the set on the right.
So assume that $2$ appears after $1$. Then the tableaux $P(\pi') = P(\pi)$
will not be hook-shaped by Lemma \ref{lem:hook-shaped}.

Now assume $\pi$ is not in the set on the right. We need to look at two cases:
\begin{enumerate}

\item $2$ is not in the first row of $P(\pi)$. This means that $1$ appears
after $2$ in $\pi$. If $1$ and $2$ are not adjacent in $\pi$ then we get
an occurrence of the pattern so we can assume that they are adjacent.
If the block $21$ is not at the end of the permutation then an elementary
Knuth-swap can be made to produce the pattern, so we can assume that
$\pi = \dotsm 21$. It is now clear that $\pi$ avoids the pattern if and only
if $\pi = \id^\rmr$.

\item Now assume that that $P(\pi)$ is not hook-shaped. The permutation
we get from
\[
P(\pi), \young(12\cdot\cdot\cdot\cdot,34\cdot\cdot,\cdot\cdot,\cdot,\cdot)
\]
by the RSK-correspondence will be equivalent to $\pi$. It is easy to see
that it will contain the pattern. \qedhere
\end{enumerate}
\end{proof}
\cite{MR2548551} give a formula for their sequence:
\[
1 + \frac{1}{2}\binom{2n-3}{n-2} + \frac{1}{2}\binom{2n-3}{n-1}.
\]
It is more natural to count the tableaux that appear with 
\[
1 + \sum_{i=2}^n \binom{n-2}{i-2} \cdot \binom{n-1}{i-1}
= 1 + \frac{1}{n-1}\sum_{i=2}^2 (i-1) \binom{n-1}{i-1}^2
= 1 + \frac{1}{2} \binom{2n-2}{n-1},
\]
which gives the same numbers as their formula.

\begin{openproblem} \label{openp:Graphs}
Find a bijection between the permutations above and the class enumerated in
\cite{MR2548551}.
\end{openproblem}

\subsection{Pattern matching}

Before moving to the next equivalence we state and prove a result
on equivalence classes that contain only permutations matching a certain
pattern.

\begin{proposition} \label{prop:knuth-matching}
The pattern $\bfp = \biv{\nivrs{1\bl2}}{\vinls{1}\bl\vinbs{2}}$ gives
\begin{align*}
\eqMn{\bfp} &= \text{permutations of the form $(n-1)\rho$ where $\rho$ avoids $123$} \\
|\eqMn{\bfp}| &= 1, 2, 5, 14, 42, 132, 429, 1430, \dotsc, \qquad n = 2,\dotsc
\end{align*}
which is A000108 in OEIS, the Catalan numbers, except this is shifted, so we get
$2$ permutations in $\symS_3$, for example.
\end{proposition}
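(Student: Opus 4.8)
The plan is to reduce everything to a statement about first letters. First I would decode the bivincular pattern $\bfp=\biv{\nivrs{1\bl2}}{\vinls{1}\bl\vinbs{2}}$ as the triple $(12,\{0\},\{1,2\})$: an occurrence is a subsequence $\pi_{i_1}\pi_{i_2}$ with $i_1=1$, with consecutive values, and with $\pi_{i_2}$ equal to the largest value $n$. Thus $\pi_{i_2}=n$ together with $\pi_{i_2}=\pi_{i_1}+1$ forces $\pi_1=\pi_{i_1}=n-1$, and conversely any $\pi$ with $\pi_1=n-1$ matches $\bfp$ (take $\pi_{i_2}=n$, which lies to the right of position $1$). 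Hence $\pi$ matches $\bfp$ if and only if $\pi_1=n-1$, exactly the condition already used for this same pattern in Section \ref{sec:Conjugacy}. Consequently $\eqMn{\bfp}=\{\pi\in\symS_n:\ \mu_1=n-1\ \text{for every}\ \mu\equiv\pi\}$, and the task is to identify this set with the permutations $(n-1)\rho$ where $\rho$ avoids $123$.

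Two facts drive the argument. The length of the longest increasing subsequence, which equals the length of the first row of $P(\pi)$, is a Knuth invariant; and a single elementary Knuth transformation changes the first letter only when it acts on positions $1,2,3$ and is of type K2 or its inverse, i.e.\ only when $\pi_1\pi_2\pi_3$ forms the pattern $132$ or $312$. For the inclusion $\supseteq$, take $\pi=(n-1)\rho$ with $\rho$ avoiding $123$. Since the only value exceeding $n-1$ is $n$, any increasing subsequence through position $1$ has length at most $2$, so $\mathrm{LIS}(\pi)=2$. Now suppose that along some chain of Knuth moves the first letter were about to change while still equal to $n-1$. A $132$ at the front is impossible, as it would need two values above $n-1$. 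A $312$ at the front would place letters $a<b<n-1$ in positions $2,3$, with $n$ necessarily in some position $\ge 4$; then $a<b<n$ is an increasing subsequence of length $3$, contradicting $\mathrm{LIS}(\pi)=2$. Hence the first letter can never leave $n-1$, so every $\mu\equiv\pi$ matches $\bfp$ and $\pi\in\eqMn{\bfp}$.

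For $\subseteq$, suppose every $\mu\equiv\pi$ begins with $n-1$; taking $\mu=\pi$ gives $\pi_1=n-1$, so $\pi=(n-1)\rho$, and it remains to see that $\rho$ avoids $123$, i.e.\ $\mathrm{LIS}(\pi)=2$. If instead $\lambda_1:=\mathrm{LIS}(\pi)\ge 3$, let $Q$ be the row-superstandard tableau of shape $\operatorname{shape}(P(\pi))$ (so $Q_{1,j}=j$) and set $\mu=\mathrm{RSK}^{-1}(P(\pi),Q)$, which is Knuth-equivalent to $\pi$ because it has the same insertion tableau. Since the cell $(1,j)$ is created at step $j$, the $j$th insertion appends a box to the first row, which forces $\mu_1<\mu_2<\dotsb<\mu_{\lambda_1}$; as $\lambda_1\ge 3$ this gives $\mu_1<\mu_2<\mu_3$, hence $\mu_1\le n-2\ne n-1$, contradicting the hypothesis. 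Therefore $\mathrm{LIS}(\pi)=2$ and $\rho$ avoids $123$. Finally $\rho\mapsto(n-1)\rho$ is a bijection onto $\eqMn{\bfp}$ from the $123$-avoiding permutations of the $(n-1)$-element set $\{1,\dotsc,n\}\setminus\{n-1\}$, of which there are $C_{n-1}$, giving the shifted Catalan count $1,2,5,14,\dotsc$ starting at $n=2$.

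The main obstacle is the escape step: producing, whenever $\mathrm{LIS}(\pi)\ge 3$, an explicit Knuth-equivalent permutation whose first letter is not $n-1$. The clean device is to read this permutation off from the row-superstandard recording tableau, which automatically begins with an increasing run of length $\mathrm{LIS}(\pi)$; checking that this $Q$ really forces $\mu_1<\mu_2<\mu_3$ is the one place where a careful look at the RSK insertion order is needed. The decoding of the bivincular pattern into the bare condition $\pi_1=n-1$ is the other point to get right, but it is pinned down by the same pattern's appearance in the conjugacy section.
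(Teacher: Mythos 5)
Your proposal is correct and takes essentially the same route as the paper's proof: you decode the pattern to the bare condition $\pi_1 = n-1$, show the first letter cannot change under Knuth moves by combining the front-$132$/$312$ analysis with invariance of the longest increasing subsequence (this is exactly the paper's ``one elementary swap'' step, $\pi = (n-1)k\ell\dotsm n\dotsm$ with $k<\ell<n-1$ yielding a $123$), and when $\mathrm{LIS}(\pi)\ge 3$ you escape via inverse RSK with a recording tableau whose first row is $1,2,\dotsc,k$, which is precisely the paper's construction. Your chain-of-moves induction is a slightly more careful write-up of the paper's reduction, but the underlying ideas coincide throughout.
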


\begin{proof}
The enumeration is obvious if we can prove the equality of the two sets.
Assume that $\pi$ is not in the set on the left, and that $\pi$ does
start with $n-1$. We can find a $\pi'$, equivalent to $\pi$, that does not
match the pattern, and we can assume that $\pi'$ is just one elementary swap
from $\pi$. We have
\begin{align*}
\pi &= (n-1)k\ell\dotsm n \dotsm \\
\pi' &= k(n-1)\ell \dotsm n \dotsm,
\end{align*}
where $k < \ell < n-1$ (we can not have $k,\ell = n$). Then $\pi$ contains $123$.

Now assume that $\pi$ is not in the set on the right and that $\pi$ starts
with $n-1$. Since $\pi$ contains the pattern $123$ we see that the first row
of $P(\pi)$ has at least three boxes. The equivalent permutation we get from
RSK of
\[
P(\pi), \young(12\cdot\cdot\cdot k,\cdot\cdot,\cdot\cdot,\cdot)
\]
does not start with $n-1$, so $\pi$ is not in the set on the left.
\end{proof}

It is well-known that the Catalan numbers enumerate tableaux of shape $(2,n)$
so that explains what tableaux appear above.

%

\section{Toric equivalence} \label{sec:Toric}
I learned about this equivalence relation from Anthony Labarre,
who responded to my question on \cite{MO}.

If $\lambda$ is a circular
permutation of $\dbrac{0,n}$ then $\lambda_\circ$ denotes the permutation in $\symS_n$ we get
by reading $\lambda$ from $0$. E.g., if $\lambda = 130254$ then $\lambda_\circ = 25413$.

Our definition of toric equivalence follows \cite{MR1861423}, but as noted
there, an equivalent class of objects
was studied by \cite{St07}.
Here the relation can be roughly viewed as declaring two permutations to be
equivalent if their
permutation matrices become equal when they are wrapped around a torus. More precisely, given a permutation
$\pi$ of $\symS_n$ we define $\pi^\circ$ as the circular permutation $0\pi$ of $\dbrac{0,n}$. Then
for any $m = 0, 1, \dotsc, n$ we define a new circular permutation
\[
\pi^\circ \oplus m = (0+m)(\pi_1 + m)(\pi_2 + m) \dotsm (\pi_n+m) \bmod{(n+1)}.
\]
(Here every letter is reduced modulo $n+1$). It is also convenient to define
\[
\pi \oplus m = (\pi^\circ \oplus m)_\circ.
\]
Then the \emph{toric class} of the original permutation
$\pi$ is defined as the set
\[
\pi^\circ_\circ = \{ \pi \oplus m \sep m = 0,1,\dotsc,n\}.
\]

\begin{example}
Let $\pi = 1243$. Then $\pi^\circ = 01243$ and
\begin{align*}
\pi^\circ \oplus 0 &= 01243,\quad \pi^\circ \oplus 1 = 12304,\quad \pi^\circ \oplus 2 = 23410, \\
\pi^\circ \oplus 3 &= 34021, \quad \pi^\circ \oplus 4 = 40132.
\end{align*}
The toric class of $\pi$ is $\pi^\circ_\circ = \{1243, 4123, 2341, 2143, 1324\}$.
\end{example}

\subsection{Symmetry}

\begin{lemma}
If $\lambda$ is a circular permutation of $\dbrac{0,n}$ and $\pi$, $\rho$
are two representations then $\pi^\rmi \equiv \rho^\rmi$.
\end{lemma}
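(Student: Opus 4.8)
The plan is to reduce everything to one identity expressing how the inverse interacts with the shift $\oplus$, and then read off the conclusion. The starting observation I would record is that inversion commutes with the circular extension: since $\pi^\circ$ is nothing but $\pi$ extended to $\dbrac{0,n}$ by fixing $0$, its inverse fixes $0$ and restricts to $\pi^\rmi$, so $(\pi^\circ)^{-1} = (\pi^\rmi)^\circ$. It is convenient to regard each circular permutation as a bijection $f$ of $\ZZ/(n+1)\ZZ$; then $\oplus m$ is the map $f \mapsto f + m$ (add $m$ to every output), while the reading operation $(\cdot)_\circ$ rotates the domain so that the preimage of $0$ lands at $0$ and then forgets that fixed point. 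Throughout I would write $f = \pi^\circ$.

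Next I would check that the circular extension of $\pi \oplus m$ is a genuine translate of $f$ on the torus $\ZZ/(n+1)\ZZ \times \ZZ/(n+1)\ZZ$. Letting $q = f^{-1}(-m)$ be the position carrying the entry $0$ in $\pi^\circ \oplus m$, a direct unwinding of the two definitions gives $(\pi \oplus m)^\circ(i) = f(i+q) + m$ for all $i$, where the anchor case $i=0$ is exactly the relation $f(q) = -m$. Inverting this yields
\[
\bigl((\pi \oplus m)^\circ\bigr)^{-1}(y) = f^{-1}(y - m) - q,
\]
which is visibly a translate of $f^{-1} = (\pi^\rmi)^\circ$; comparing it with the same shift formula applied to $\pi^\rmi$ pins down the shift and gives the key identity
\[
(\pi \oplus m)^\rmi = \pi^\rmi \oplus m', \qquad m' = -\,(\pi^\circ)^{-1}(-m) \bmod (n+1).
\]
Geometrically this just says that inverting a permutation transposes its toric matrix, and that transposition turns a torus translation by $(a,b)$ into one by $(b,a)$; so it is no surprise that a shift of $\pi$ becomes a shift of $\pi^\rmi$.

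Finally I would conclude. Because $\pi$ and $\rho$ are two representatives of the toric class attached to $\lambda$, we have $\rho = \pi \oplus m$ for some $m$, and the identity above gives $\rho^\rmi = \pi^\rmi \oplus m'$, an element of the toric class of $\pi^\rmi$; hence $\pi^\rmi \equiv \rho^\rmi$, both representing the inverse circular permutation $\lambda^{-1}$. I expect the only genuine friction to be the bookkeeping inside $(\cdot)_\circ$: keeping the domain shift $q$ and the range shift $m$ separate, and in particular verifying the translate formula at the anchor $i=0$. Once that is done the rest is purely formal, and it would be prudent to sanity-check the shift $m' = -(\pi^\circ)^{-1}(-m)$ on the running example $\pi = 1243$, where $\pi \oplus 1 = 4123$ has inverse $2341 = \pi^\rmi \oplus 2$, confirming both the identity and the predicted value of $m'$.
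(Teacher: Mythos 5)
Your proof is correct, but note that the paper supplies no proof of this lemma at all: it is stated bare, its content implicitly deferred to Lemma~\ref{lem:symmtor}(\ref{lem:symmtor-inverse}) and the toric-symmetry proposition that follows, so there is no argument of the author's to compare line by line. Two remarks on how your route relates to the machinery the paper does develop. First, your decision to read ``two representations'' torically (so that $\rho = \pi \oplus m$) and $\equiv$ as toric equivalence is not merely convenient but forced: under plain rotation-equality of circular words the statement is false --- the rotations $0132$ and $1320$ of the same circular word have inverses $0132$ and $3021 = 0132 \ominus 1$, which are not rotations of one another --- so inversion is well defined only on toric classes, which is exactly what your key identity establishes. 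Second, that identity, $(\pi \oplus m)^\rmi = \pi^\rmi \oplus m'$ with $m' = -(\pi^\circ)^{-1}(-m) \bmod (n+1)$, is a sharpened, all-$m$-at-once version of the paper's Lemma~\ref{lem:symmtor}(\ref{lem:symmtor-inverse}): there the case $m=1$ is proved by a local ``first element after $0$'' argument and the general case is asserted by induction with shift $mx$, but that induction is delicate because the distance $x$ from $n$ to $0$ changes in passing from $\lambda$ to $\lambda \oplus 1$. Indeed for $\lambda = 0132$ (so $x = 2$, taking $m = 2$ modulo $4$) one computes $(\lambda \oplus 2)^\rmi \equiv 0312$ while $\lambda^\rmi \oplus 2x \equiv 0132$, whereas your formula gives $m' = -\lambda^{-1}(2) = -3 \equiv 1 \pmod 4$ and $\lambda^\rmi \oplus 1 \equiv 0312$, the correct value; so your single global torus-translation computation is exact where the paper's stated generalization is not, though for the present lemma only the \emph{existence} of some shift is needed and either route closes the argument. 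One small wording caveat: since the inverse of a circular permutation is itself only defined up to $\oplus$-shift, your closing phrase ``both representing $\lambda^{-1}$'' should be understood torically --- which is precisely what your own identity demonstrates.
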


Note that when we take the complement of a permutation of $\dbrac{0,n}$
(modulo $n+1$) then
$0$ and stays fixed, and it is the only letter that does so if $n$ is even;
if $n$ is odd then $(n+1)/2$ also stays fixed.

\begin{lemma}\label{lem:symmtor}
For any permutation $\pi$ of $\dbrac{1,n}$ we have:
\begin{enumerate}

\item \label{lem:symmtor-reverse}
Taking reverse commutes with the $\circ$-operator:
\[
(\pi^\rmr)^\circ \equiv (\pi^\circ)^\rmr, \textrm{ or equivalently }
((\pi^\circ)^\rmr)_\circ = \pi^\rmr.
\]
Also, for any circular permutation $\lambda$ of $\dbrac{0,n}$ we have
\[
\lambda^\rmr \oplus m \equiv (\lambda \oplus m)^\rmr.
\]

\item \label{lem:symmtor-complement}
Taking complement commutes with the $\circ$-operator:
\[
(\pi^\rmc)^\circ \equiv (\pi^\circ)^\rmc, \textrm{ or equivalently }
((\pi^\circ)^\rmc)_\circ = \pi^\rmc.
\]
Also, for any circular permutation $\lambda$ of $\dbrac{0,n}$ we have
\[
\lambda^\rmc \oplus m \equiv (\lambda \ominus m)^\rmc.
\]

\item \label{lem:symmtor-inverse}
Taking inverse commutes with the $\circ$-operator:
\[
(\pi^\rmi)^\circ \equiv (\pi^\circ)^\rmi, \textrm{ or equivalently }
((\pi^\circ)^\rmi)_\circ = \pi^\rmi.
\]
Also, for any circular permutation $\lambda$ of $\dbrac{0,n}$ we have
\[
\lambda^\rmi \oplus x \equiv (\lambda \oplus 1)^\rmi,
\]
where $x$ is the distance (counter-clock-wise) from $n$ to $0$ in $\lambda$.
More generally, we have
\[
\lambda^\rmi \oplus mx \equiv (\lambda \oplus m)^\rmi,
\]
where $x$ has the same value as above.

\end{enumerate}
\end{lemma}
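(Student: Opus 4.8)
The plan is to run all three parts through the permutation-matrix picture on the torus $(\ZZ/(n+1))^2$, in which a circular permutation $\lambda$ of $\{0,\dots,n\}$ is recorded as the point set $\{(i,\lambda_i)\}$, the operator $\oplus m$ is the vertical translation $(i,v)\mapsto(i,v+m)$, reverse is the horizontal reflection $(i,v)\mapsto(-i,v)$, complement is the vertical reflection $(i,v)\mapsto(i,-v)$, and inverse is the reflection in the main diagonal $(i,v)\mapsto(v,i)$. In this language the three ``commutes with $\circ$'' statements reduce to one observation: forming $\pi^\circ=0\pi$ adjoins the point $(0,0)$, and---since reading from $0$ absorbs rotations---each of the three reflections may be normalized to fix $(0,0)$ (complement fixes the value $0$, the diagonal reflection fixes the point $(0,0)$, and reverse fixes position $0$ after a rotation). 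Hence $(\pi^\rma)^\circ$ and $(\pi^\circ)^\rma$ have the same point set up to rotation, so they agree after reading from $0$; this settles the first half of each of (1), (2), (3) at once.

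For the $\oplus$-identities I would first dispose of the easy cases. Since $\oplus m$ is a pure value operation and reverse is a pure position operation, they commute verbatim, giving $\lambda^\rmr\oplus m=(\lambda\oplus m)^\rmr$ of (1). For (2), complement is the value map $v\mapsto -v$, so the two composites agree on each letter, $v\mapsto -v\mapsto -v+m$ and $v\mapsto v-m\mapsto -(v-m)=-v+m$, which is exactly $\lambda^\rmc\oplus m=(\lambda\ominus m)^\rmc$. Both are immediate from the dictionary, with no reading-from-$0$ subtlety because neither operation disturbs which letter is $0$.

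The substance is the inverse identity of (3), where the vertical $\oplus$ and the transpose genuinely interact, and I would concentrate on $m=1$. Normalize $\lambda$ to its reading-from-$0$ representative, so that $(0,0)$ is one of its points, and let $p$ be the position of the letter $n$. Applying $\oplus 1$ turns that letter into $0$, so reading again from $0$ re-origins the positions by $-p$, and the point set of the normalized $\lambda\oplus1$ is that of $\lambda$ translated by $(-p,1)$. Transposing, and invoking the first-half identity already proved, the point set of $(\lambda\oplus1)^\rmi$ is that of $\lambda^\rmi$ translated by $(1,-p)$. I would then identify this with the normalized form of $\lambda^\rmi\oplus(-p)$: the vertical part is the shift $-p$, while the re-origin this shift forces is horizontal by $-n\equiv 1\pmod{n+1}$ (the value $0$ now lands at position $n$), matching the $1$ above. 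As $-p\equiv(n+1)-p$ is the counter-clockwise distance $x$ from $n$ to $0$, this is exactly $\lambda^\rmi\oplus x\equiv(\lambda\oplus1)^\rmi$.

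Finally I would reach general $m$ by iterating the case $m=1$, writing $\lambda\oplus m=(\lambda\oplus(m-1))\oplus 1$ and feeding each stage into the relation just established, so that the successive re-origin offsets add up; confirming that the total is the asserted $mx$ is where the real care is needed. Indeed, the step I expect to be the main obstacle is precisely this accumulation: at every stage one must keep track of where the letters $0$ and $n$ have migrated, since it is their separation---the distance $x$---that converts the horizontal re-origin produced by the transpose back into an honest vertical $\oplus$-shift. Everything else in the lemma reduces to the commutation of value-operations with position-operations encoded in the torus dictionary.
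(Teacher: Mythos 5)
Your torus dictionary correctly disposes of the first halves of (1)--(3) and the $\oplus$-identities in (1) and (2), and your $m=1$ computation for (3) is sound: the bookkeeping $S^T+(1,-p)$ versus $S^T+(0,-p)$ with $x\equiv -p \bmod (n+1)$ checks out against the paper's own example ($\lambda=04372156$, $x=5$), and it is a cleaner, global version of the paper's argument, which only proves (3) for $m=1$ by tracking, letter by letter, ``the first element after $0$'' on both sides, leaving (1) and (2) to the reader. (You also implicitly adopt the right convention, forced by the paper's example, that $\lambda^\rmi$ means the inverse of the representative read from $0$; otherwise the inverse of a circular permutation is only defined up to vertical shift.) However, the obstacle you flag in your final paragraph is not mere bookkeeping --- it is fatal to the clause as stated. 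Iterating the $m=1$ case yields $(\lambda\oplus m)^\rmi \equiv \lambda^\rmi \oplus (x_0+x_1+\dotsb+x_{m-1})$, where $x_j$ is the counter-clockwise distance from $n$ to $0$ in $\lambda\oplus j$, and these distances change from step to step: the distance from $n$ to $0$ in $\lambda\oplus 1$ equals the distance from $n-1$ to $n$ in $\lambda$. The paper's own example already refutes the constant-$x$ claim: for $\lambda=04372156$ ($n=7$, $x=5$) one has $x_0=5$, $x_1=4$, so $(\lambda\oplus 2)^\rmi\equiv\lambda^\rmi\oplus 1$ (indeed both normalize to $04165327$), whereas $2x\equiv 2 \bmod 8$ and $\lambda^\rmi\oplus 1\not\equiv\lambda^\rmi\oplus 2$. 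So no amount of care in the accumulation can produce the asserted total $mx$; the ``more generally'' display is false as printed.

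You are in good company: the paper's proof contains the identical gap, dismissed with ``the other cases can be deduced by induction'', and that induction fails for exactly the reason you isolated --- the inductive step would need the $n$-to-$0$ distance in $\lambda\oplus(m-1)$ to still equal $x$. The correct repair is to weaken the final clause to the existence statement your iteration actually proves: for every $m$ there is some $s$ (namely $s=x_0+\dotsb+x_{m-1}$) with $\lambda^\rmi\oplus s\equiv(\lambda\oplus m)^\rmi$. This weaker form is all that is used downstream, in the proposition that toric equivalence is preserved by inverse and hence in Theorem \ref{thm:toricsymmetry}, where one only needs $(\pi^\circ\oplus m)^\rmi$ to be a vertical shift of $(\pi^\circ)^\rmi$. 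With that correction your argument is complete and, unlike the paper's, covers all three parts explicitly; as written, both your attempt and the paper establish strictly less than the lemma's last display, which no proof can rescue.
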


\begin{proof}
Parts (\ref{lem:symmtor-reverse}) and (\ref{lem:symmtor-complement}) are left to
the reader, and we only prove part (\ref{lem:symmtor-inverse}). It suffices to
prove that part in the case $m=1$ as the other cases can be deduced by induction.
The first element after $0$ on the right-hand side is the location of $1$ (from $0$) in
$\lambda \oplus 1$. This equals the location of $0$ (from $n$) in $\lambda$.

The first element after $0$ on the left-hand side equals (the first element
after $n+1-x$ in $\lambda^\rmi$)$+x$. Now the first element after $n+1-x$ in $\lambda^\rmi$
records the location of the element in $\lambda$ that is one larger than the element
at location $n+1-x$. But the element at location $n+1-x$ is $n$, so the first element after
$n+1-x$ records the location of $0$, which is at location $0$. When we add $x$ to this
we get $x$ back. This argument generalizes to every location from $0$.
\end{proof}

\begin{example}
Let $\lambda = 04372156$. Here the distance from $n$ to $0$ is $x=5$.
Then $\lambda \oplus 1 = 15403267$ and $(\lambda \oplus 1)^\rmi = 63405217$.
This is the right-hand side of part (\ref{lem:symmtor-inverse}) above. To calculate
the left-hand side we first have $\lambda^\rmi = 05421673$ and then
$\lambda^\rmi \oplus 5 = 52176340$.
\end{example}

\noindent
Some of the equations in Lemma \ref{lem:symmtor} are also true when regarding the
permutations as usual permutations of $\dbrac{0,n}$, and not just as circular.

\begin{proposition}
Let $\rma$ be any composition of the operations $\rmr, \rmc, \rmi$.
The permutations $\pi$ and $\mu$ in $\symS_n$ are torically equivalent
if and only if $\pi^\rma$ and $\mu^\rma$ are torically equivalent.
\end{proposition}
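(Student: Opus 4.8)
The plan is to reduce the statement to the three generators $\rmr$, $\rmc$, $\rmi$ and, for each of them, to verify directly that toric equivalence is preserved, using Lemma \ref{lem:symmtor} as the engine. The useful reformulation is that, at the level of circular permutations, $\pi$ and $\mu$ are torically equivalent precisely when $\mu^\circ \equiv \pi^\circ \oplus m$ for some shift $m \in \{0,1,\dotsc,n\}$. Thus I will show that applying a generator $\rma$ carries a relation of this form into another relation of the same form, with a possibly different but still valid shift.

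First I would observe that each of $\rmr$, $\rmc$, $\rmi$ is an involution on $\symS_n$. It therefore suffices to prove the single forward implication $\pi \sim \mu \Rightarrow \pi^\rma \sim \mu^\rma$ for each generator: applying that implication to the pair $\pi^\rma, \mu^\rma$ and using $(\pi^\rma)^\rma = \pi$ yields the converse, and chaining the resulting biconditionals handles an arbitrary composition $\rma$ of the three operations.

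Next I would carry out the three computations, assuming $\mu^\circ \equiv \pi^\circ \oplus m$ throughout. For reverse, Lemma \ref{lem:symmtor}(\ref{lem:symmtor-reverse}) gives
\[
(\mu^\rmr)^\circ \equiv (\mu^\circ)^\rmr \equiv (\pi^\circ \oplus m)^\rmr \equiv (\pi^\circ)^\rmr \oplus m \equiv (\pi^\rmr)^\circ \oplus m,
\]
so $\pi^\rmr \sim \mu^\rmr$ with the same shift. For complement, Lemma \ref{lem:symmtor}(\ref{lem:symmtor-complement}) turns $\oplus m$ into $\ominus m$: rewriting $\pi^\circ \oplus m \equiv \pi^\circ \ominus (n+1-m)$ and applying the identity yields $(\mu^\rmc)^\circ \equiv (\pi^\rmc)^\circ \oplus (n+1-m)$, and since $n+1-m$ reduced modulo $n+1$ again lies in $\{0,\dotsc,n\}$ this is a legitimate shift, giving $\pi^\rmc \sim \mu^\rmc$. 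For inverse, Lemma \ref{lem:symmtor}(\ref{lem:symmtor-inverse}) replaces $\oplus m$ by $\oplus mx$, where $x$ is the distance from $n$ to $0$ in $\pi^\circ$, so that $(\mu^\rmi)^\circ \equiv (\pi^\rmi)^\circ \oplus mx$ and hence $\pi^\rmi \sim \mu^\rmi$.

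The only genuinely delicate point is the bookkeeping between the circular and linear reductions and between the shift values. In each chain I must be entitled to commute $(\cdot)^\rma$ past $(\cdot)^\circ$ and past $\oplus m$ using exactly the two identities recorded in the corresponding part of Lemma \ref{lem:symmtor}, and I must read the transformed shift ($m$, $n+1-m$, or $mx$) modulo $n+1$ so that it lands in the admissible range $\{0,\dotsc,n\}$. The inverse case is the one to watch, since the multiplicative factor $x$ depends on $\pi$; but because the argument only requires the \emph{existence} of some shift realizing the equivalence, and $mx \bmod (n+1)$ is automatically such a value, no surjectivity of the map $m \mapsto mx$ is needed. This is where I expect the main, albeit modest, obstacle to lie.
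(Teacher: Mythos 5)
Your proposal is correct and follows essentially the same route as the paper's own proof: reduce to the generators $\rmr$, $\rmc$, $\rmi$, use the fact that each is an involution to get the biconditional from one implication, and then push the shift $m$ through the $\circ$-operator via the three parts of Lemma \ref{lem:symmtor}, obtaining the shifts $m$, $n+1-m$ (the paper writes this as $\ominus m$), and $mx$ respectively. Your explicit remark that only the \emph{existence} of a shift is needed in the inverse case is a sound observation that the paper leaves implicit.
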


\begin{proof}
It clearly suffices to prove this for $\rma \in \{ \rmr, \rmc, \rmi \}$, and since
any such $\rma$ is its own inverse, it suffices to prove just one direction.

We start with $\rma = \rmr$. Since $\pi$ and $\mu$ are torically equivalent
there exists $m \in \dbrac{0,n}$ such that $\pi^\circ \oplus m = \mu^\circ$.
But by Lemma \ref{lem:symmtor}, part (\ref{lem:symmtor-reverse}), we have
\[
(\pi^\rmr)^\circ \oplus m \equiv (\pi^\circ)^\rmr \oplus m \equiv
(\pi^\circ \oplus m)^\rmr \equiv (\mu^\circ)^\rmr \equiv
(\mu^\rmr)^\circ,
\]
so $\pi^\rmr$ and $\mu^\rmr$ are torically equivalent.

Now consider $\rma = \rmc$. Since $\pi$ and $\mu$ are torically equivalent
there exists $m \in \dbrac{0,n}$ such that $\pi^\circ \oplus m = \mu^\circ$.
But by Lemma \ref{lem:symmtor}, part(\ref{lem:symmtor-complement}), we have
\[
(\pi^\rmc)^\circ \ominus m \equiv (\pi^\circ)^\rmc \ominus m \equiv
(\pi^\circ \oplus m)^\rmc \equiv (\mu^\circ)^\rmc \equiv
(\mu^\rmc)^\circ,
\]
so $\pi^\rmr$ and $\mu^\rmr$ are torically equivalent.

Now consider $\rma = \rmi$. Since $\pi$ and $\mu$ are torically equivalent
there exists $m \in \dbrac{0,n}$ such that $\pi^\circ \oplus m = \mu^\circ$.
Let $x$ be the distance (counter-clock-wise) from $n$ to $0$ in $\pi^\circ$.
Then by Lemma \ref{lem:symmtor}, part(\ref{lem:symmtor-inverse}), we have
\[
(\pi^\rmi)^\circ \oplus mx \equiv (\pi^\circ)^\rmi \oplus mx \equiv
(\pi^\circ \oplus m)^\rmi \equiv (\mu^\circ)^\rmi \equiv
(\mu^\rmi)^\circ,
\]
so $\pi^\rmr$ and $\mu^\rmr$ are torically equivalent.
\end{proof}

Then Proposition \ref{prop:basic}
implies the following theorem.

\begin{theorem} \label{thm:toricsymmetry}
Let $\rma$ be a composition of elements from the set
$\{\rmr, \rmc, \rmi\}$. Then
$\rma: \eqAn{\bfp} \to \eqAn{\bfp^\rma}$ is a bijection for all $n$.
\end{theorem}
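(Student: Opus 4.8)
The plan is to deduce this theorem as an immediate formal consequence of Proposition \ref{prop:basic}, part (\ref{prop:basic-3}). That part requires, as its hypothesis, that for any two permutations $\pi, \mu \in \symS_n$ one has $\pi \sim \mu$ if and only if $\pi^\rma \sim \mu^\rma$. But this bidirectional compatibility is precisely what the proposition immediately preceding established: for any composition $\rma$ of the basic operations $\rmr, \rmc, \rmi$, toric equivalence is preserved in both directions under $\rma$.

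Concretely, first I would note that each of $\rmr$, $\rmc$, $\rmi$ is a bijection of $\symS_n$, hence so is any composition $\rma$; this is what lets us even speak of the restricted maps $\rma \colon \eqAn{\bfp} \to \eqAn{\bfp^\rma}$ and $\rma \colon \eqMn{\bfp} \to \eqMn{\bfp^\rma}$. Then I would invoke the preceding proposition to supply the iff-compatibility of toric equivalence with $\rma$. Feeding this into Proposition \ref{prop:basic}(\ref{prop:basic-3}) yields that both restricted maps are bijections, for every $n$, which is exactly the claim.

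There is no real obstacle remaining at this stage: all of the combinatorial content has already been discharged in the preceding proposition. In particular, the genuinely delicate case was the inverse operation $\rmi$, where preserving toric equivalence required tracking the correct shift, namely replacing a shift by $m$ on the original class with a shift by $mx$ on the inverted class, where $x$ is the counter-clockwise distance from $n$ to $0$ (Lemma \ref{lem:symmtor}, part (\ref{lem:symmtor-inverse})). Once that bookkeeping is settled, the present theorem carries no further combinatorial weight and follows purely formally.
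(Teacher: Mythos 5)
Your proposal is correct and follows exactly the route the paper takes: the paper derives Theorem \ref{thm:toricsymmetry} by feeding the immediately preceding proposition (the iff-compatibility of toric equivalence with any composition of $\rmr$, $\rmc$, $\rmi$, which in turn rests on Lemma \ref{lem:symmtor}) into Proposition \ref{prop:basic}, part (\ref{prop:basic-3}). Your added remarks --- that $\rma$ is a bijection of $\symS_n$ and that the inverse case was the delicate step --- are accurate glosses on content the paper has already discharged, so nothing further is needed.
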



We further define, for a pattern $\bfp = (p,X,Y)$, of rank $r$,
such that the letter $r$ is at position $\ell$ in $p$ (so $\ell$
equals the last letter of $p^\rmi$)
\[
\bfp \oplus 1
= (p \oplus 1, X \oplus (r+1-\ell), Y \oplus 1)
= (p \oplus 1, X \ominus \ell, Y \oplus 1),
\]
where the addition on the sets is done modulo $r+1$ as usual. Here
is the reason for making this definition:

\begin{example} \label{toricex:firstshift}
Let $\pi = 76128543$ and
\[
\bfp = (3421, \{2,3\}, \{1,2,4\}) = \biv{\niv{1\bl2\bl3}\bl\nivr{4}}{\vinb{3}\bl\vin{4\bl2\bl1}}.
\]
Then $\pi$ has an occurrence of $\bfp$ shown here
$7\ull{6}12\ull{8}\ull{5}\ull{4}3$. Now $\pi \oplus 1 = 65418723$ has
an occurrence of
\[
\bfp \oplus 1 = (3214, \{0,1\}, \{0,2,3\})
= \biv{\nivl{1}\bl\niv{2\bl3\bl4}}{\vinl{3\bl2}\vinb{1\bl4}}
\]
shown here $\ull{6}\ull{5}4\ull{1}8\ull{7}23$.
\end{example}

This example is a special case of the following:

\begin{proposition} \label{prop:newsym}
Let $\bfp = (p,X,Y)$ be a pattern of rank $r$ such that $r \in Y$.
The permutation $\pi$ avoids the pattern $\bfp$ if and only if
the permutation $\pi \oplus 1$ avoids the pattern
$\bfp \oplus 1$.
\end{proposition}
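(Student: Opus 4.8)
The plan is to show that the toric rotation $\oplus 1$ induces an explicit bijection between the occurrences of $\bfp$ in $\pi$ and the occurrences of $\bfp \oplus 1$ in $\pi \oplus 1$; since avoidance just means the absence of any occurrence, the proposition follows at once. The first step is a concrete description of $\pi \oplus 1$. Let $j$ be the position of the largest value, so that $\pi_j = n$. By definition of $\oplus 1$, on $\pi^\circ$ every entry is raised by $1$ modulo $n+1$ and one re-reads from the entry that becomes $0$: thus $n$ becomes the new, discarded origin, the old origin $0$ becomes the value $1$, and each remaining $\pi_i$ becomes $\pi_i + 1$ without wrapping. Concretely $\pi \oplus 1 = (\pi_{j+1}+1)\dotsm(\pi_n+1)\,1\,(\pi_1+1)\dotsm(\pi_{j-1}+1)$, and at the level of circular positions the point $q \in \dbrac{0,n}$ is sent to $(q - j) \bmod (n+1)$, with $j$ mapping to the discarded origin.

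Next I would set up the correspondence on occurrences. Suppose $\pi_{i_1}\dotsm\pi_{i_r}$, with $i_1 < \dotsm < i_r$, is an occurrence of $\bfp$. Because $r \in Y$ we have $j_r = n$, so this occurrence contains the maximal value $n$; it occupies the $\ell$-th slot, where $\ell$ is the position of $r$ in $p$ as in the statement, so $i_\ell = j$. I send the occurrence to the subsequence of $\pi \oplus 1$ supported on the rotated positions $(i_t - j) \bmod (n+1)$ for $t \neq \ell$, together with the image $n+1-j$ of the old origin $0$, which carries the new minimal value $1$. In other words, on the circle the $r+1$ marked points $0, i_1, \dotsm, i_r$ are rotated by $-j$; the point $i_\ell$ lands on the new origin and is dropped, while the point $0$ re-enters the occurrence as its smallest value.

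I then have to check that this carries the triple $(p, X, Y)$ to $(p \oplus 1,\, X \ominus \ell,\, Y \oplus 1)$. For the underlying permutation, the $r-1$ surviving values $j_1 < \dotsm < j_{r-1}$ are each raised by $1$ and the adjoined value $1$ lies below them all; placed at the rotated positions this is exactly the recipe defining $p \oplus 1$, with $n$ in the role of $r$ and the ambient origin in the role of the origin of $p^\circ$, so the image is order-isomorphic to $p \oplus 1$. For the position set, writing $g_0, \dotsm, g_r$ for the $r+1$ cyclic gaps between consecutive marked points $0 = i_0, i_1, \dotsm, i_r, i_{r+1} \equiv 0$, the condition $x \in X$ says that $g_x$ is tight; rotation relabels $g_{(\ell + t) \bmod (r+1)}$ as the $t$-th gap of the image, so the new position set is $\{t : (\ell + t) \bmod (r+1) \in X\} = X \ominus \ell$. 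For the value set, the new sorted values satisfy $j'_1 = 1$ and $j'_{s+1} = j_s + 1$ for $1 \le s \le r-1$; comparing consecutive values against the phantoms $j'_0 = 0$ and $j'_{r+1} = n+1$ gives $0 \in Y'$ always, $1 \in Y' \iff 0 \in Y$, and $s \in Y' \iff (s-1) \in Y$ for $2 \le s \le r$, which, using $r \in Y$, is precisely $Y' = \{0\} \cup \{y+1 : y \in Y,\ y < r\} = Y \oplus 1$.

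Finally, the construction is manifestly invertible---rotate the positions back by $+j$, equivalently apply the same recipe to $\pi \oplus 1$ starting from its minimal value---so it is a genuine bijection between the two occurrence sets, which proves the proposition. I expect the main obstacle to be the boundary bookkeeping in this verification: under the rotation the occurrence's maximal value $n$ and the ambient origin $0$ exchange roles, and it is this swap, together with the wrap-around of the phantom indices $i_0, i_{r+1}$ and $j_0, j_{r+1}$, that produces the shifts $X \ominus \ell$ and $Y \oplus 1$. Keeping the modular indexing consistent through this role reversal---for instance checking that $r \in Y$ forces $0 \in Y \oplus 1$, which matches the fact that every occurrence of $\bfp \oplus 1$ must contain the minimal value---is the delicate point, while the remaining checks are routine.
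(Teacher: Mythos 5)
Your proof is correct and takes essentially the same route as the paper: the paper's (very terse) argument likewise rests on the observation that $r \in Y$ forces every occurrence of $\bfp$ to contain the maximal letter $n$, so that the rotation $\oplus 1$ \lqq splits up and reassembles\rqq\ the permutation and the pattern at the same spot, carrying occurrences of $\bfp$ in $\pi$ bijectively to occurrences of $\bfp \oplus 1$ in $\pi \oplus 1$. You have simply written out in full the gap-and-value bookkeeping (the shifts $X \ominus \ell$ and $Y \oplus 1$, including the consistency check that $r \in Y$ yields $0 \in Y \oplus 1$) that the paper leaves implicit.
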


\begin{proof}
Assume the permutation $\pi$ contains the pattern $\bfp$. Then
when we add $1$ modulo $n+1$ the permutation and the pattern are
split up and reassembled at the same spot, so the
permutation $\pi\oplus1$ will contain $\bfp\oplus1$. Reverse this
argument to show the other implication.
\end{proof}

The last Proposition gives a new way to prove Wilf-equivalences for
bivincular patterns:

\begin{corollary} \label{toriccor:Wilfeq}
Let $\bfp = (p,X,Y)$ be a pattern of rank $r$ such that $r \in Y$.
The map
\[
\oplus 1 \colon \symA_n(\bfp) \to \symA_n(\bfp \oplus 1)
\]
is a bijection, and $\eqAn{\bfp} = \eqAn{\bfp \oplus 1}$.
\end{corollary}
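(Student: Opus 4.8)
The plan is to deduce Corollary~\ref{toriccor:Wilfeq} directly from Proposition~\ref{prop:newsym}, treating the map $\oplus 1$ as a bijection on $\symS_n$ and checking that it restricts appropriately to the avoidance sets. First I would recall that $\oplus 1$ is a bijection on all of $\symS_n$: the operation $\pi \mapsto \pi \oplus 1$ is obtained by passing to the circular permutation $\pi^\circ$, adding $1$ modulo $n+1$ to every entry, and reading off from $0$ again; this is clearly invertible, with inverse $\ominus 1$ (add $n$ modulo $n+1$, equivalently subtract $1$). So on the level of underlying sets there is nothing to prove; the whole content is that this bijection matches avoidance of $\bfp$ with avoidance of $\bfp \oplus 1$.

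Next I would invoke Proposition~\ref{prop:newsym}, whose hypothesis $r \in Y$ is exactly the standing hypothesis of the corollary. That proposition says $\pi$ avoids $\bfp$ if and only if $\pi \oplus 1$ avoids $\bfp \oplus 1$. This immediately shows that $\oplus 1$ sends $\symA_n(\bfp)$ into $\symA_n(\bfp \oplus 1)$, and the ``only if'' direction (applied to the inverse map $\ominus 1$, together with the observation that $\bfp \oplus 1$ also has its rank element in its bottom-bar set, so the proposition applies symmetrically) shows the image is exactly $\symA_n(\bfp \oplus 1)$. Combining injectivity on $\symS_n$ with this characterization of the image gives that
\[
\oplus 1 \colon \symA_n(\bfp) \to \symA_n(\bfp \oplus 1)
\]
is a bijection, and in particular $|\symA_n(\bfp)| = |\symA_n(\bfp \oplus 1)|$, which is the asserted Wilf-equivalence.

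For the second assertion, $\eqAn{\bfp} = \eqAn{\bfp \oplus 1}$, I would argue that $\oplus 1$ permutes each toric class, so that the set of permutations \emph{all of whose torically equivalent permutations avoid $\bfp$} is literally the same set for $\bfp$ and for $\bfp \oplus 1$. Concretely, $\pi \in \eqAn{\bfp}$ means every $\mu$ in the toric class $\pi^\circ_\circ$ avoids $\bfp$. Since $\oplus 1$ maps the toric class of $\pi$ to itself (it just re-indexes which representative is the base point, by the definition $\pi^\circ_\circ = \{\pi \oplus m\}$), and since by Proposition~\ref{prop:newsym} each $\mu$ avoids $\bfp$ precisely when $\mu \oplus 1$ avoids $\bfp \oplus 1$, the condition ``every member of the class avoids $\bfp$'' is equivalent to ``every member of the class avoids $\bfp \oplus 1$.'' Hence the two defining conditions coincide and the sets are equal, not merely equinumerous.

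I expect the main obstacle to be purely bookkeeping: verifying that the hypothesis $r \in Y$ is preserved under $\oplus 1$ so that Proposition~\ref{prop:newsym} can legitimately be applied in the reverse direction (to recover the ``onto'' half of the bijection), and confirming that $\oplus 1$ genuinely stabilizes each toric class as a set rather than merely mapping classes to classes. Both of these follow from the explicit definitions of $\bfp \oplus 1$ and of $\pi^\circ_\circ$ given just above the statement, so no substantive new idea is needed beyond carefully tracking the modular arithmetic.
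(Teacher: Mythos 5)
Your overall route is the intended one: the paper states this corollary without proof, as an immediate consequence of Proposition~\ref{prop:newsym}, and your argument---restrict the bijection $\oplus 1$ of $\symS_n$ to the avoidance set, then use the fact that $\oplus 1$ stabilizes each toric class setwise (since $(\pi \oplus m) \oplus 1 = \pi \oplus (m+1)$, the class $\{\pi \oplus m \sep m = 0,\dotsc,n\}$ maps onto itself) to upgrade the second assertion to an honest equality of sets rather than a bijection---is exactly the implicit derivation. The toric-class part of your write-up is correct and complete.

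However, one step as written is false: the hypothesis $r \in Y$ is \emph{not} preserved by $\oplus 1$. Addition in $Y$ is performed modulo $r+1$, so the element $r \in Y$ becomes $0 \in Y \oplus 1$, and $r \in Y \oplus 1$ would require $r-1 \in Y$, which is not given. The paper's own first example is a counterexample: $\bfp = (12, \tom, \{0,2\})$ has $r = 2 \in Y$, while $\bfp \oplus 1 = (12, \tom, \{0,1\})$ does not contain $2$ in its value set; this is precisely why the paper remarks that the construction ``can be iterated as long as we have $r \in Y$''---the hypothesis can be lost after a single application. Moreover, even if $r \in Y \oplus 1$ did hold, applying Proposition~\ref{prop:newsym} to the pattern $\bfp \oplus 1$ would relate $\symA_n(\bfp \oplus 1)$ to $\symA_n(\bfp \oplus 2)$, not back to $\symA_n(\bfp)$. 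Fortunately the misstep is harmless, because the symmetric application you reach for is unnecessary: Proposition~\ref{prop:newsym} is already a biconditional for the fixed pattern $\bfp$. For surjectivity, given $\mu \in \symA_n(\bfp \oplus 1)$, set $\pi = \mu \ominus 1$; then $\pi \oplus 1 = \mu$ avoids $\bfp \oplus 1$, so the ``if'' direction of the proposition yields $\pi \in \symA_n(\bfp)$, exhibiting $\mu$ in the image. With that one repair your proof is complete and coincides with the paper's (unwritten) argument.
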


This can be iterated as long as we have $r \in Y$ and also combined
with the basic symmetries as the following Example shows.

\begin{example}
\text{}
\begin{enumerate}

\item Consider the two patterns
\[
\bfp = (12, \tom, \{0,2\}) = \biv{\nivl{1}\bl\nivr{2}}{\vinb{1\bl2}}, \qquad
\bfq = (12, \tom, \{0,1\}) = \biv{\nivl{1\bl2}}{\vinb{1\bl2}}
\]
that were shown to be Wilf-equivalent in \cite[subsection 3.4]{P09}.
This can not be done only in terms of the basic symmetries,
but follows from $\bfp \oplus 1 = \bfq$.

\item Consider the two patterns
\[
\bfp = (132, \tom, \{0,1,2\}) = \biv{\nivl{1\bl2\bl3}}{\vinb{1\bl3\bl2}}, \qquad
\bfq = (132, \tom, \{0,2,3\}) = \biv{\nivl{1}\bl\nivr{2\bl3}}{\vinb{1\bl3\bl2}}
\]
that were shown to be Wilf-equivalent in \cite[subsection 5.17]{P09}. This can not be
done only in terms of the basic symmetries, but using the $\oplus 1$ map
we get
\begin{align*}
(\bfp^\rmc \oplus 1)^\rmr &= ((312, \tom, \{1,2,3\}) \oplus 1)^\rmr
= (231, \tom, \{0,2,3\})^\rmr\\
&= (132, \tom, \{0,2,3\}) = \bfq.
\end{align*}

\item Consider the two patterns
\[
\bfp = (123, \{0\}, \{0,3\}) = \biv{\nivl{1}\bl2\bl\nivr{3}}{\vinl{1}\bl\vinb{2\bl3}}, \qquad
\bfq = (123, \{1\}, \{0,1\}) = \biv{\nivl{1\bl2}\bl3}{\vin{1\bl2}\bl\vinb{3}}
\]
that were shown to be Wilf-equivalent in \cite[subsection 5.18]{P09}. This can not be
done only in terms of the basic symmetries, but follows from
$\bfp \oplus 1 = \bfq$.

\item Here is an example showing that we really need to have $r$ in
$Y$. Let $\bfp = \vinb{1}\bl\vin{3\bl2}\bl\vinb{4} = (1324, \{2\}, \tom)$. Then
$\bfp \oplus 1 = (1243, \{3\}, \tom) = \vinb{1\bl2}\bl\vin{4\bl3}$ and
\[
|\symA_6(\bfp)| = 549, \qquad |\symA_6(\bfp \oplus 1)| = 550.
\]

\item The patterns in Example \ref{toricex:firstshift} above are Wilf-equivalent
by Corollary \ref{toriccor:Wilfeq} and this is very likely a new result (since
to my knowledge the question of Wilf-equivalence of bivincular patterns has only
been considered up to length $3$).

\end{enumerate}
\end{example}

Although the $\oplus 1$ map isn't as useful when we don't have $\rank(p)$ in the
vertical set it still gives us:

\begin{lemma} \label{lem:oplus}
Fix an occurrence of a bivincular pattern $\bfp$ in a permutation $\pi$.
Assume $\bfp$ has rank $r$ and let $k$ be the letter in the occurrence
that corresponds to $r$. Then the permutation $\pi \ominus k$ contains
$\bfp \oplus 1$.
\end{lemma}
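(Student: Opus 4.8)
The plan is to track the fixed occurrence of $\bfp$ through the toric rotation $\ominus k$, in exactly the spirit of the proof of Proposition \ref{prop:newsym}, the only difference being that we cut the value-torus at the top of the \emph{occurrence} rather than at $n$. Recall that $\pi \ominus k = (\pi^\circ \ominus k)_\circ$ subtracts $k$ from every letter of $\pi^\circ = 0\pi_1\dotsm\pi_n$ modulo $n+1$ and then re-reads the result starting from the entry that has become $0$. Write the fixed occurrence as $\pi_{i_1}\dotsm\pi_{i_r}$ with values $v_1 < \dotsm < v_r$; since $\bfp$ has rank $r$ and $k$ is the letter corresponding to $r$, we have $v_r = k$, so every other occurrence-value satisfies $v_s < k$. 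First I would observe that $\ominus k$ is a bijection of $\symS_n$ whose restriction to the $r$ occurrence-letters performs precisely the operation $\oplus 1$ on the smaller $(r+1)$-torus of the occurrence: the top value $k$ (playing the role of $r$) is the one sent to the cut, so the cyclic order of the occurrence-values is rotated by one step and their relative order becomes that of $p \oplus 1$.

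I would then check that the two families of adjacency constraints transform as demanded by the definition $\bfp \oplus 1 = (p \oplus 1, X \ominus \ell, Y \oplus 1)$. Subtracting $k$ preserves all value-differences except across the wrap, so a pair of occurrence-values that were consecutive in the value-order of $\pi$ stays consecutive after the rotation unless the wrap falls between them; accounting for this single shift carries the value-adjacency set $Y$ to $Y \oplus 1$. Likewise, re-reading from the position formerly occupied by $k$ — which sits at relative position $\ell$ inside the occurrence, since $\ell$ is the location of $r$ in $p$ — cyclically re-anchors all positions and carries the position-adjacency set $X$ to $X \ominus \ell$. Combined with the relative-order computation, this exhibits the image of the occurrence as an occurrence of $\bfp \oplus 1$ in $\pi \ominus k$, which is the assertion.

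The delicate point, and the step I expect to be the main obstacle, is the behaviour at the boundary. Because $k$ is sent to $0$ it becomes the reading anchor and therefore disappears from the linear word $\pi \ominus k$; to retain an occurrence of the rank-$r$ pattern $\bfp \oplus 1$ one must supply a replacement letter, namely the letter of $\pi$ of smallest value exceeding $k$ (which becomes the new minimum after the shift), with the global anchor $0 \mapsto 1$ taking this role in the degenerate case $k = n$. I would verify that this replacement letter automatically occupies the position and value slots required by $X \ominus \ell$ and $Y \oplus 1$, using the boundary conventions $i_0 = j_0 = 0$ and $i_{r+1} = j_{r+1} = n+1$. It is exactly this need to re-insert a boundary letter that keeps the argument from being reversible, which is why the statement records only containment rather than the two-sided equivalence of Proposition \ref{prop:newsym}; the latter is recovered precisely when the hypothesis $r \in Y$ forces $k = n$ and hence $\ominus k = \oplus 1$.
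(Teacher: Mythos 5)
Your overall plan is the intended one: the paper actually states Lemma~\ref{lem:oplus} without proof, and the natural argument is exactly what you sketch, namely rotating the position circle and the value circle of $\pi^\circ$ simultaneously by $\ominus k$ and tracking the occurrence, with Proposition~\ref{prop:newsym} as the model. The genuine error is your identification of the replacement letter, and it is not cosmetic, because the whole verification of the adjacency sets hinges on which letter fills the vacated slot. Under $\ominus k$ each occurrence letter $v < k$ becomes $v + (n+1-k) \in \{n+2-k, \dotsc, n\}$, the letter $k$ becomes the new reading anchor and vanishes from the linear word, and the \emph{old anchor} $0$ becomes the honest letter $n+1-k$ of $\pi \ominus k$, located precisely at the wrap (after the image of the tail of $\pi$ and before the image of its head). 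That letter is the one that must fill the slot: it is smaller than every other occurrence image, it sits at exactly the cyclic position that $p \oplus 1$ demands for its letter $1$, and the boundary conventions $i_0 = j_0 = 0$, $i_{r+1} = j_{r+1} = n+1$ for the old occurrence translate verbatim into the constraints of $X \ominus \ell$ and $Y \oplus 1$ at this new letter. Your candidate --- the image of the smallest letter of $\pi$ exceeding $k$, i.e.\ the letter $1$ of $\pi \ominus k$ --- sits wherever $k+1$ happened to sit in $\pi$, so it need not lie between the two segments of the rotated occurrence, and it carries none of the required adjacency data; moreover your relegation of the anchor to the case $k = n$ is backwards, since the anchor image is the correct choice for \emph{every} $k$ (when $k = n$ it happens to be the letter $1$, which is why that case looked special to you).

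A minimal counterexample to your prescription: let $\pi = 2413$ and $\bfp = (21, \tom, \{0\})$, with the occurrence $(2,1)$ at positions $(1,3)$, so $r = 2$, $\ell = 1$, $k = 2$. Then $\pi \ominus 2 = 2413$, whose letters at positions $1,2,3,4$ are the images of old $4$, old $1$, old $3$, and the old anchor $0$, respectively, and $\bfp \oplus 1 = (21, \tom, \{1\})$, which requires the two occurrence values to be consecutive. The anchor image yields the valid occurrence $(4,3)$ at positions $(2,4)$; your letter yields $(4,1)$ at positions $(2,3)$, and $1,4$ are not consecutive, so this is not an occurrence of $\bfp \oplus 1$. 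With the replacement letter corrected, the rest of your verification (the relative-order computation, $X \mapsto X \ominus \ell$ from cutting at the position of $k$, and $Y \mapsto Y \oplus 1$ from the value rotation) goes through and agrees with Example~\ref{toricex:firstshift}; note also that with the anchor letter the correspondence is clean at the circular level, so the one-sidedness of the lemma comes from the cut value $k$ depending on the chosen occurrence, not from any need to improvise a boundary letter.
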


\begin{theorem} \label{thm:substable}
For any pattern $\bfp$ we have $\eqAn{\bfp} \subseteq \Aeqn{\bfp}$
\end{theorem}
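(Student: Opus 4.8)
The plan is to unwind the two definitions and reduce the inclusion to a single statement about the toric shift, which is then settled by iterating Lemma \ref{lem:oplus}. Recall that $\pi \in \eqAn{\bfp}$ means that $\pi$ and every torically equivalent permutation avoid $\bfp$, whereas $\pi \in \Aeqn{\bfp}$ means that $\pi$ avoids $\bfp$ together with every torically equivalent \emph{pattern}, that is, every $\bfp \oplus m$ with $m = 0, 1, \dotsc, r$, where $r$ is the rank of $\bfp$. Since $\pi \in \eqAn{\bfp}$ already forces $\pi$ itself to avoid $\bfp$ (the case $m = 0$), the whole statement comes down to showing that $\pi$ also avoids each $\bfp \oplus m$ for $1 \le m \le r$.

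I would argue this contrapositively. Suppose $\pi$ contains an occurrence of some shifted pattern $\bfp \oplus m$ with $1 \le m \le r$; the goal is to manufacture a permutation torically equivalent to $\pi$ that contains $\bfp$ itself, which contradicts $\pi \in \eqAn{\bfp}$. The engine is Lemma \ref{lem:oplus}, applied with $\bfp$ replaced by the rank-$r$ pattern $\bfp \oplus m$: from the given occurrence, with $k$ the letter playing the role of the maximum, the lemma produces an occurrence of $(\bfp \oplus m) \oplus 1 = \bfp \oplus (m+1)$ in the permutation $\pi \ominus k$. The point I would emphasize is that $\pi \ominus k$ is just another toric shift of $\pi$, hence torically equivalent to it.

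Next I would iterate this single step $r+1-m$ times. Each application replaces the current permutation by a further toric shift of it and advances the pattern by one, walking through $\bfp \oplus (m+1), \bfp \oplus (m+2), \dotsc$ until after $r+1-m$ steps the pattern returns to $\bfp \oplus (r+1) = \bfp$; here I use that the shift on patterns is periodic with period dividing $r+1$, which is immediate from the definition, since the additions on $p$, on $X$, and on $Y$ are all taken modulo $r+1$. Because toric equivalence is an equivalence relation and each intermediate permutation is obtained from $\pi$ by toric shifts, the final permutation is torically equivalent to $\pi$ and contains an occurrence of $\bfp$. This is exactly the contradiction sought, so $\pi$ avoids every $\bfp \oplus m$ and therefore lies in $\Aeqn{\bfp}$.

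I expect the only genuine subtlety, and hence the main obstacle, to be the bookkeeping behind the periodicity claim $\bfp \oplus (r+1) = \bfp$: one must check that the accumulated shifts of the set $X$ return to their starting value after a full cycle, even though the increment applied to $X$ at each step (namely $\ominus \ell$, with $\ell$ the position of the maximum in the current intermediate pattern) varies from step to step. This should be routine rather than deep, because the construction is built to mirror the genuinely periodic toric shift $\oplus(r+1) = \id$ on circular permutations of $\dbrac{0,r}$; all of the conceptual content is carried by Lemma \ref{lem:oplus}, and the rest is verification against the definition of $\bfp \oplus 1$.
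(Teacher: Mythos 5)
Your proof is correct and takes essentially the same approach as the paper's: both argue the contrapositive by iterating Lemma \ref{lem:oplus} to carry an occurrence of a torically shifted pattern back to an occurrence of $\bfp$ in a torically equivalent permutation, the paper merely compressing your iteration into the reduction \lqq we can assume $\bfp = \bfp' \oplus 1$\rqq. Your explicit verification of the periodicity $\bfp \oplus (r+1) = \bfp$ (the shifts applied to $X$ telescope to $0$ modulo $r+1$) is a correct and welcome filling-in of a detail the paper leaves implicit, not a different method.
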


\begin{proof}
Assume $\pi$ is not in the set on the right, so $\pi$ contains a pattern
$\bfp'$ that is torically equivalent to $\bfp$.
We can assume that $\bfp = \bfp' \oplus 1$. By Lemma \ref{lem:oplus}
we see that a permutation $\pi'$ that is torically equivalent to $\pi$
contains $\bfp$. This shows that $\pi$ is not in the set on the left.
\end{proof}

This theorem will be crucial when we prove Theorems \ref{toricprop:213},
\ref{thm:totient}, \ref{thm:divisors} below.

\subsection{Number and size of classes}
The number of equivalence classes in $\symS_n$ are
\[
1, 2, 3, 8, 24, 108, 640, 4492,\dotsc \qquad n = 0,1,2,3, \dotsc.
\]
This is A002619: Number of $2$-colored patterns on an $(n+1) \times (n+1)$ board.
Instead of proving that this is indeed true, let us prove a stronger result,
that enumerates the number of classes of a particular size:

For each $n$ we can write $n! = \sum (\text{number of classes of size $i$}) \cdot i$. For
$n = 1, \dotsc, 8$ this gives us the following equations,
\begin{align*}
1! &= 1 \cdot 1, \\
2! &= 1 \cdot 2, \\
3! &= 1 \cdot 2 + 4 \cdot 1, \\
4! &= 1 \cdot 4 + 5 \cdot 4, \\
5! &= 1 \cdot 2 + 2 \cdot 2 + 3 \cdot 2 + 6 \cdot 18, \\
6! &= 1 \cdot 6 + 7 \cdot 102, \\
7! &= 1 \cdot 4 + 2 \cdot 2 + 4 \cdot 10 + 8 \cdot 624, \\
8! &= 1 \cdot 6 + 3 \cdot 10 + 9 \cdot 4476.
\end{align*}

The following theorem gives us a way to explain the number that appear above, i.e., how
many classes of size $i$ there are.
\begin{theorem} \label{toricthm:fromSteggall}
The number of classes in $\symS_n$ of size $k$ is
\[
\frac{1}{(n+1)k}\sum_{d|k} \mu(d) U(n+1,\frac{k}{d})
\]
if $k|(n+1)$ but zero otherwise. Here $U(n+1,\ell) =
\phi(\frac{n+1}{l}) \left( \frac{n+1}{l} \right)^\ell \ell!$
\end{theorem}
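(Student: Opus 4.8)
The plan is to recognise the toric classes as the orbits of a cyclic group action and then extract the number of orbits of each size by Möbius inversion, the fixed-point counts turning out to be exactly the numbers $U(n+1,\ell)$. Write $N = n+1$ throughout. First I would note that $\oplus$ defines an action of the cyclic group $\ZZ/N\ZZ$ on the set $\Omega$ of circular permutations of $\dbrac{0,n}$: indeed $(\lambda\oplus m)\oplus m' \equiv \lambda\oplus(m+m')$ and $\lambda\oplus N \equiv \lambda$, so $m\mapsto(\,\cdot\,\oplus m)$ is a $\ZZ/N\ZZ$-action, and by definition the toric class $\pi^\circ_\circ$ is precisely the orbit of $\pi^\circ$. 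Since $\pi\mapsto\pi^\circ$ identifies $\symS_n$ with $\Omega$, we have $|\Omega| = n!$, and by orbit--stabiliser every orbit has size dividing $N$. This immediately yields the last clause of the theorem (no classes of size $k$ unless $k\mid N$) and reduces the whole problem to counting the orbits of a cyclic group of a prescribed size.

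Next I would run the standard size-by-size orbit count. The subgroups of $\ZZ/N\ZZ$ are exactly $\langle a\rangle$ for $a\mid N$, with $\langle a\rangle$ of order $N/a$, and an orbit has size $k$ precisely when the common stabiliser of its members is $\langle k\rangle$. Writing $g(\ell)$ for the number of $x\in\Omega$ fixed by $\oplus\ell$ (equivalently, by the subgroup $\langle \ell\rangle$), one has $g(\ell)=\sum_{a\mid\ell}\#\{x:\mathrm{Stab}(x)=\langle a\rangle\}$, since $\langle\ell\rangle\subseteq\langle a\rangle$ iff $a\mid\ell$. Möbius inversion over the divisor lattice, together with the fact that each orbit of size $k$ contributes exactly $k$ permutations of stabiliser $\langle k\rangle$, gives
\[
\#\{\text{orbits of size }k\}=\frac1k\sum_{d\mid k}\mu(d)\,g(k/d).
\]
It then remains only to prove the fixed-point formula $g(\ell)=U(N,\ell)/N$ for $\ell\mid N$; substituting it and writing $\ell=k/d$ produces exactly $\frac{1}{(n+1)k}\sum_{d\mid k}\mu(d)U(n+1,k/d)$.

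For the fixed-point count I would pass to linear representatives. A circular permutation has exactly $N$ linear readings, and since a sequence of distinct entries has no nontrivial rotational symmetry, the position-shift group $\ZZ/N\ZZ$ acts freely on the $N!$ linear permutations of $\ZZ/N\ZZ$ with quotient $\Omega$. The condition ``$x\oplus\ell\equiv x$'' is rotation-invariant, so the set $L_\ell$ of linear permutations $\lambda$ for which $\lambda+\ell$ (values shifted by $\ell$) is a rotation of $\lambda$ is a union of free $\ZZ/N\ZZ$-orbits, whence $g(\ell)=|L_\ell|/N$. Viewing $\lambda$ as a bijection of $\ZZ/N\ZZ$, the defining condition reads $\lambda(i+s)=\lambda(i)+\ell$ for all $i$, i.e.\ $\lambda\tau_s\lambda^{-1}=\rho_\ell$, where $\tau_s$ and $\rho_\ell$ are the shifts by $s$ and by $\ell$. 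Here $s$ is uniquely determined by $\lambda$ (as $\tau_s=\lambda^{-1}\rho_\ell\lambda$), so $L_\ell$ splits according to $s$. A conjugating $\lambda$ exists iff $\tau_s$ and $\rho_\ell$ have the same cycle type; since $\ell\mid N$, the map $\rho_\ell$ has $\ell$ cycles of length $q:=N/\ell$, while $\tau_s$ has $\gcd(N,s)$ cycles of length $N/\gcd(N,s)$, so equality forces $\gcd(N,s)=\ell$, which holds for exactly $\phi(N/\ell)=\phi(q)$ values of $s$. For each such $s$ the conjugating bijections form a single coset of the centraliser of $\tau_s$, of order $q^{\ell}\,\ell!$. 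Hence $|L_\ell|=\phi(q)\,q^{\ell}\,\ell!=\phi(N/\ell)(N/\ell)^{\ell}\ell!=U(N,\ell)$, giving $g(\ell)=U(N,\ell)/N$.

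I expect the main obstacle to be the bookkeeping of the two nested quotients: the rotation quotient that turns linear permutations into circular ones, and the value-shift quotient whose orbits are the toric classes. The delicate points are verifying that the position-shift action is free (so that $g(\ell)=|L_\ell|/N$ with no correction) and that $s$ is determined by $\lambda$ (so that the factor $\phi(N/\ell)$ counts each linear permutation exactly once); both feed into the conjugacy/centraliser computation that is the heart of the argument. Once $g(\ell)=U(N,\ell)/N$ is established, the Möbius step is routine, and the small cases $n=3,5$ (orbit sizes $\{1,4\}$ and $\{1,2,3,6\}$) can be used as a sanity check against the displayed factorisations of $3!$ and $5!$.
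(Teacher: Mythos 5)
Your proof is correct, and it is worth noting where it coincides with and where it departs from the paper's argument. The overall skeleton is the same: count permutations invariant under the value-shift $\oplus\ell$, apply M\"obius inversion over the divisor lattice, then divide by $n+1$ (passing from linear to circular readings) and by $k$ (grouping into orbits of $\oplus 1$) to get $\frac{1}{(n+1)k}\sum_{d\mid k}\mu(d)U(n+1,k/d)$. The difference is that the paper treats the fixed-point count $U$ as a black box, citing Steggall [St07] for the formula $U(N,\ell)=\phi(N/\ell)(N/\ell)^\ell\,\ell!$ and only contributing the M\"obius inversion step itself, whereas you prove Steggall's count from scratch: the identity $\lambda\tau_s\lambda^{-1}=\rho_\ell$ reduces the count to conjugacy of cyclic shifts, the cycle-type comparison pins down $\gcd(N,s)=\ell$ (giving the factor $\phi(N/\ell)$), and the coset-of-centralizer count supplies $(N/\ell)^\ell\,\ell!$ --- all standard symmetric-group facts, so your write-up is self-contained where the paper's is not. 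Your group-action framing also buys two things the paper leaves implicit or handles ad hoc: the orbit--stabilizer theorem immediately yields the \lqq zero unless $k\mid(n+1)$\rqq\ clause, and the observation that stabilizers are constant along orbits of an abelian group (so each size-$k$ orbit contributes exactly $k$ elements of stabilizer $\langle k\rangle$) justifies the division by $k$ cleanly. The delicate points you flagged --- freeness of the rotation action on words with distinct letters, rotation-invariance of the condition $x\oplus\ell\equiv x$ (so $g(\ell)=|L_\ell|/N$ with no correction), and the uniqueness of $s$ given $\lambda$ (so the $\phi(N/\ell)$ cases are disjoint) --- are all handled correctly, and your sanity checks against the paper's displayed factorizations of $3!$ and $5!$ come out right.
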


Before proving this, say that a a permutation $\pi$ in $\symS_n$
is \emph{cyclically invariant under $\oplus k$} if $\pi$ and
$\pi\oplus k$ are the same when considered as circular permutations.

\begin{proof}
Everything needed to prove this is in \cite{St07}, except the last step where
we apply the M\"obius inversion formula. We also adapt Steggall's
arguments to our notation. Steggall showed that the number of permutations in $\symS_n$ that
are cyclically invariant under $\oplus k$ is
\[
U(n,k) = \begin{cases}

\phi(\frac{n}{k}) \left( \frac{n}{k} \right)^k k!      & \text{if $k | n$}, \\

0      & \text{otherwise}.
\end{cases}
\]
This number includes permutations that are cyclically invariant under $\oplus d$,
where $d$ is factor of $k$. Let $u(n,d)$ count the permutations that are cyclicly
invariant under $d$, but no smaller factor. We therefore have
\[
U(n,k) = \sum_{d|k} u(n,d).
\]
Steggall inverts this formula, writing $u(n,d)$ in terms of $U(n,k)$ for some special
cases. We can combine
these by applying the M\"obius inversion formula
\[
u(n,k) = \sum_{d|k} \mu(d) U(n,\frac{k}{d}),
\]
where $\mu$ is the M\"obius function. We now have a formula that counts the number
of permutations in $\symS_n$ that are cyclically invariant under $\oplus k$
and no smaller factor. Turning these into cyclic permutations their number is
$u(n,k)/n$ and the action of $\oplus 1$ breaks this set into orbits of size $k$,
each one corresponding to a toric class of size $k$ in $\symS_{n-1}$.
\end{proof}

%
%
%
%

Here is the array we get by calculating a few values of the number of classes
in $\symS_n$, starting from $n = 0$:
\[
\begin{tabular}{ccccccccc}
1 \\
1 & 0 \\
2 & 0 & 0 \\
2 & 0 & 0 & 1 \\
4 & 0 & 0 & 0 & 4 \\
2 & 2 & 2 & 0 & 0 & 18 \\
6 & 0 & 0 & 0 & 0 & 0 & 102 \\
4 & 2 & 0 & 10 & 0 & 0 & 0 & 624 \\
6 & 0 & 10 & 0 & 0 & 0 & 0 & 0 & 4476
\end{tabular}
\]

It is easy to verify that for $k = 1$ we get $\phi(n+1)$ classes of
size $1$ from the formula in Theorem {toricthm:fromSteggall}.
The permutations that lie in these one element classes will be
important later; see Lemma \ref{lem:toricclasssize1} below, which gives another
proof of the enumeration of these classes.

I should note that in \cite{MR2028287} toric classes appear in a disguise
as \emph{$(n+1)$-orbits}. The author shows that the total number of these
orbits is
\[
\frac{1}{(n+1)^2}\sum_{kp=n+1} \phi(p)^2 k! p^k,
\]
by the orbit counting lemma. This gives A002619 in OEIS, and is equal to
the row sums of the array above.

\subsection{Pattern avoidance}

\subsubsection*{Patterns and modular sequences}

A \emph{modular $k$ sequence} is sequence of letters
$\ell_1, \ell_2, \dots, \ell_k$ such that $\ell_{i+1} = \ell_i + 1 \bmod{(n+1)}$.

\begin{proposition}
If $\bfp = \biv{\nivls{1}}{\vinls{1}}$ 
then the set $\eqAn{\bfp}$ is equinumerous
with the set $C_\dbrac{0,n}'$ of circular permutations of $\dbrac{0,n}$ that
have no modular $2$-sequences, i.e., $i$ is never followed by $i+1 \bmod{(n+1)}$.
\end{proposition}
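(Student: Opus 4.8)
The plan is to exhibit an explicit bijection between $\eqAn{\bfp}$ and $C_\dbrac{0,n}'$ through the correspondence $\pi \mapsto \pi^\circ$, which by definition is already a bijection from $\symS_n$ onto the circular permutations of $\dbrac{0,n}$ (reading from $0$ recovers $\pi$). First I would unwind the pattern: since $\bfp = (1,\{0\},\{0\})$, a permutation $\mu \in \symS_n$ contains $\bfp$ precisely when $\mu_1 = 1$, i.e.\ when the letter immediately after $0$ in the circular permutation $\mu^\circ$ is $1$. Consequently $\pi$ lies in $\eqAn{\bfp}$ exactly when no member $\pi \oplus m$ of its toric class begins with the letter $1$.

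The heart of the argument is to translate the condition that $(\pi\oplus m)_1 = 1$ for some $m$ into a statement purely about $\pi^\circ$. Viewing $\pi^\circ$ as a cyclic word on $\dbrac{0,n}$, the operation $\oplus m$ adds $m$ modulo $n+1$ to every letter while leaving the underlying cyclic adjacencies untouched, and $(\pi\oplus m)_\circ$ is read off starting from the unique position now carrying the letter $0$, namely the position that previously held $a := -m \bmod (n+1)$. Its cyclic successor, which previously held some letter $b$, now holds $b+m$; thus $(\pi\oplus m)_1 = 1$ holds if and only if $b+m = 1$, that is $b = a+1 \bmod (n+1)$. In words, $(\pi\oplus m)_1 = 1$ iff in $\pi^\circ$ the letter $a$ is immediately followed (cyclically) by $a+1 \bmod (n+1)$. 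As $m$ runs over $0,1,\dotsc,n$, the letter $a = -m \bmod (n+1)$ runs over all of $\dbrac{0,n}$, so some toric representative begins with $1$ if and only if $\pi^\circ$ contains a modular $2$-sequence.

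Putting these together, $\pi \in \eqAn{\bfp}$ if and only if $\pi^\circ$ has no modular $2$-sequence, i.e.\ $\pi^\circ \in C_\dbrac{0,n}'$. Since $\pi \mapsto \pi^\circ$ is a bijection on the ambient sets, it restricts to a bijection $\eqAn{\bfp} \to C_\dbrac{0,n}'$, which yields the claimed equinumerosity (and in fact a canonical bijection).

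I expect the only delicate point to be the bookkeeping in the middle step: keeping straight that $\oplus m$ shifts \emph{labels} rather than cyclic positions, and that reading from $0$ singles out the successor of the letter $-m$. Once this is pinned down the equivalence of the two avoidance conditions is immediate, and no appeal to the deeper toric machinery (such as Theorem \ref{thm:substable} or Proposition \ref{prop:newsym}, the latter of which does not even apply here since $r = 1 \notin Y = \{0\}$) is required.
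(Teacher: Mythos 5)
Your proof is correct and takes essentially the same route as the paper: both use the bijection $\pi \mapsto \pi^\circ$ together with the observation that some toric representative $\pi \oplus m$ begins with $1$ (i.e.\ contains $\bfp = (1,\{0\},\{0\})$) exactly when some letter $a$ of $\pi^\circ$ is cyclically followed by $a+1 \bmod (n+1)$. Your careful bookkeeping of how $\oplus m$ relabels letters while fixing cyclic adjacencies is just a more explicit rendering of the paper's shift-by-$(n+1-i)$ step, which sends a successor pair $i(i+1)$ to $01$.
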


Modular $2$-sequences are sometimes called \emph{successor pairs}.
The enumeration corresponds to sequence A000757:
\[
0, 1, 1, 8, 36, 229, 1625, 13208,\dotsc, \qquad n = 1,2,3, \dotsc.
\]

\begin{proof}
We will construct a bijection between
$\eqAn{\bfp}$ and the set of circular permutations of $\dbrac{0,n}$
without successor pairs. We do this as follows:
\[
f \colon \eqAn{\bfp} \to C_\dbrac{0,n}'
\]
We let $f(\pi) = \pi^\circ = 0\pi$. We first need to show that the image of $f$ is
in the set on the right. So assume $f(\pi)$ has a successor pair, $i(i+1) \bmod{(n+1)}$.
Then $(f(\pi) \oplus (n+1-i))_\circ$ is torically equivalent to $\pi$ and starts with
$1$, but that is impossible, since all permutations torically equivalent to $\pi$
avoid $\bfp = (1, \{0\}, \{0\})$.

Now there is a map in the other direction $g(\lambda) = \lambda_\circ$. A similar
argument to the one above shows that the image of $g$ is actually in the set
on the left. The maps are obviously inverses of each other so we are done.
\end{proof}


 

\begin{proposition}
If $\bfp = \biv{\nivls{1\bl2}}{\vinls{1\bl2}}$ 
then the set $\eqAn{\bfp}$ is equinumerous
with the set $C_\dbrac{0,n}''$ of circular permutations of $\dbrac{0,n}$ that
have no modular $3$-sequences, i.e., $i$ is never followed by $i+1$ and $i+2$
$\bmod{(n+1)}$.
\end{proposition}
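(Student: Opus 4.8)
The plan is to imitate the proof of the preceding proposition almost verbatim, with modular $3$-sequences playing the role of successor pairs. First I would unwind the pattern: in the notation $(p,X,Y)$ we have $\bfp = \biv{\nivls{1\bl2}}{\vinls{1\bl2}} = (12,\{0,1\},\{0,1\})$. The conditions $0,1 \in X$ force $i_1 = 1$ and $i_2 = 2$, while $0,1 \in Y$ force the two values of the occurrence to be $1$ and $2$; so a permutation $\sigma$ matches $\bfp$ exactly when $\sigma_1 = 1$ and $\sigma_2 = 2$, i.e.\ when $\sigma$ begins with $12$. Consequently $\pi \in \eqAn{\bfp}$ if and only if no permutation in the toric class $\pi^\circ_\circ$ begins with $12$. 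As before, the candidate bijection is $f(\pi) = \pi^\circ = 0\pi$, which identifies $\symS_n$ with the circular permutations of $\dbrac{0,n}$ (with inverse $g(\lambda) = \lambda_\circ$), and I would show that $f$ restricts to a bijection $\eqAn{\bfp} \to C_\dbrac{0,n}''$.

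The heart of the matter is the equivalence: some member of the toric class of $\pi$ begins with $12$ if and only if $\pi^\circ$ contains a modular $3$-sequence. To establish it, fix $m$ and put $j = n+1-m$. Since each value $v$ of $\pi^\circ$ becomes $v+m \bmod (n+1)$ in $\pi^\circ \oplus m$, the value $0$ of $\pi^\circ \oplus m$ sits where $\pi^\circ$ carried $j$; hence, reading from $0$, the permutation $\pi\oplus m$ begins with the two letters following $j$ in $\pi^\circ$, each increased by $m$ modulo $n+1$. If those two letters are $a,b$, then $\pi \oplus m$ begins with $(a+m)(b+m) \bmod (n+1)$, and this equals $12$ precisely when $a \equiv j+1$ and $b \equiv j+2 \pmod{n+1}$ --- that is, precisely when $j$ is followed by $j+1$ and then $j+2$ in $\pi^\circ$. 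As $m$ runs through $0,\dots,n$ the value $j$ runs through all of $\dbrac{0,n}$, so the toric class of $\pi$ meets the permutations beginning with $12$ exactly when $\pi^\circ$ has a modular $3$-sequence.

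Granting this equivalence, both inclusions are immediate. If $f(\pi)$ has a modular $3$-sequence at $j$, then taking $m = n+1-j$ produces a torically equivalent permutation beginning with $12$, so $\pi \notin \eqAn{\bfp}$; conversely, if $\pi^\circ$ has no modular $3$-sequence then no toric-equivalent of $\pi$ begins with $12$, whence $\pi \in \eqAn{\bfp}$. Thus $f$ carries $\eqAn{\bfp}$ onto $C_\dbrac{0,n}''$ and $g$ carries it back, giving the desired bijection. I expect the only real work to be the modular bookkeeping in the middle step, which is a routine extension of the successor-pair computation in the previous proposition; the sole point needing a word of care is the boundary case $j \equiv 0$ (corresponding to $m = 0$), which simply records that $\pi$ itself begins with $12$ exactly when $0$ starts a modular $3$-sequence in $\pi^\circ$.
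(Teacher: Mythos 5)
Your proposal is correct and takes essentially the same route as the paper: both use the map $f(\pi)=\pi^\circ=0\pi$ and the key observation that a modular $3$-sequence starting at $j$ in $\pi^\circ$ corresponds, via the shift $\oplus\,(n+1-j)$, to a torically equivalent permutation beginning with $12$ (which is exactly what matching $\bfp=(12,\{0,1\},\{0,1\})$ means). Your write-up simply makes explicit the modular bookkeeping and the converse inclusion that the paper compresses into \lqq a similar argument\rqq.
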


The enumeration is
\[
|\eqAn{\bfp}| = 1, 1, 5, 18, 95, 600, 4307, 35168, \qquad (n = 1,2,3,4,5,6,7,8).
\]
This corresponds to sequence A165962.

\begin{proof}
We will construct a bijection between
$\eqAn{\bfp}$ and the set of circular permutations of $\dbrac{0,n}$
without modular $3$-sequences. We do this as follows:
\[
f \colon \eqAn{\bfp} \to C_\dbrac{0,n}''
\]
We let $f(\pi) = \pi^\circ = 0\pi$. We first need to show that the image of $f$ is
in the set on the right. So assume $f(\pi)$ has a modular $3$-sequence,
$i(i+1)(i+2) \bmod{(n+1)}$.
Then $(f(\pi) \oplus (n+1-i))_\circ$ is torically equivalent to $\pi$ and starts with
$12$, but that is impossible, since all permutations torically equivalent to $\pi$
avoid $\bfp$.

Now there is a map in the other direction $g(\lambda) = \lambda_\circ$. A similar
argument to the one above shows that the image of $g$ is actually in the set
on the left. The maps are obviously inverses of each other so we are done.
\end{proof}

It then turns out that you can keep going (the proof is an easy generalization
of the proofs above and we omit it):
\begin{proposition}
For $k \geq 1$, let $\bfp = \biv{\nivls{1\bl2\dotsm k}}{\vinls{1\bl2\dotsm k}}$.
The set $\eqAn{\bfp}$ is equinumerous with the set of
circular permutations of $\dbrac{0,n}$ that have no modular $(n+1)$-sequences.
\end{proposition}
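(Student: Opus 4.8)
The plan is to prove the proposition as the evident generalisation of the two preceding bijections, reusing the maps $f(\pi)=\pi^\circ=0\pi$ and $g(\lambda)=\lambda_\circ$ between $\symS_n$ and the circular permutations of $\dbrac{0,n}$. First I would unwind the bivincular notation: $\bfp=(12\cdots k,\{0,1,\dots,k-1\},\{0,1,\dots,k-1\})$, so that an occurrence of $\bfp$ in a permutation $\rho$ is exactly the requirement that $\rho$ begin with the increasing block $12\cdots k$. Thus $\eqAn{\bfp}$ is the set of permutations none of whose toric representatives start with $12\cdots k$, and the whole problem reduces to initial segments, precisely as for $k=1,2$. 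I note that, read literally, a modular $(n+1)$-sequence in a circular permutation of $\dbrac{0,n}$ would fill the entire circle and force the modular-increasing cycle, so the literal right-hand side has only $n!-1$ elements and cannot match $|\eqAn{\bfp}|$ for general $k$; the printed $(n+1)$ is therefore a misprint for $(k+1)$, consistent with the lengths $2$ and $3$ in the two previous propositions, and it is this corrected statement that I prove.

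The key step is the translation between toric shifts and modular runs, which I would isolate as a single observation: for $\pi\in\symS_n$, the circular permutation $\pi^\circ=0\pi$ contains a modular run $i,i+1,\dots,i+k \bmod(n+1)$ of length $k+1$ if and only if the toric representative $\pi\oplus(n+1-i)$ begins with $12\cdots k$, i.e.\ contains $\bfp$. Indeed, the single shift $\oplus(n+1-i)$ carries the window $i,\dots,i+k$ onto $0,1,\dots,k$, and reading the shifted circular permutation from its $0$ displays $1,2,\dots,k$ as the initial block of the one-line form; running this backwards gives the converse.

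Granting this observation, I would run the two inclusions verbatim as in the $k=1,2$ proofs. If $f(\pi)=\pi^\circ$ had a modular run then shifting by $\oplus(n+1-i)$ would produce a torically equivalent permutation beginning with $12\cdots k$, contradicting $\pi\in\eqAn{\bfp}$; hence $f$ lands among the circular permutations with no modular run of length $k+1$. Conversely, if some toric representative of $\lambda_\circ$ contained $\bfp$ it would begin with $12\cdots k$, and restoring its $0$ to position $0$ would exhibit a modular run in $\lambda$; hence $g$ lands in $\eqAn{\bfp}$. Since $g(f(\pi))=(0\pi)_\circ=\pi$ and $f(g(\lambda))=0\,\lambda_\circ=\lambda$ as circular permutations, $f$ and $g$ are mutually inverse and restrict to the desired bijection.

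The only non-routine point is the bookkeeping in the key observation — checking that one shift $\oplus(n+1-i)$ simultaneously normalises the window to $0,1,\dots,k$ and, after reading from $0$, produces exactly the initial block $12\cdots k$. This is the same $\oplus$-mechanics already established in Proposition \ref{prop:newsym} and Lemma \ref{lem:oplus}, so no new idea is required; everything else is a word-for-word repetition of the two displayed proofs, which is why the author omits it.
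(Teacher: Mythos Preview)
Your proposal is correct and is exactly the ``easy generalization'' the paper has in mind: the same maps $f(\pi)=\pi^\circ$ and $g(\lambda)=\lambda_\circ$, the same shift $\oplus(n+1-i)$ to normalise a modular run to an initial block $12\cdots k$, and the same verification that $f$ and $g$ are mutual inverses. Your observation that the printed ``$(n+1)$-sequences'' must be a typo for ``$(k+1)$-sequences'' is also correct and worth noting.
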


\subsubsection*{Patterns with connections to number theory}

Before we look at the next pattern we have some definitions to make.

\begin{definition} \label{toricdef:naturaldivisor}
Let $n \geq 1$ be fixed.
\begin{enumerate}

\item  For any $k$ coprime to $n+1$ we define a permutation $\nu_{k,n}$
first by constructing a circular permutation $\lambda_{k,n}$ of $\dbrac{0,n}$ as follows:
Place $0$ anywhere, then place $1$ by moving $k$ steps from $0$ (so there are $k-1$
empty positions between $0$ and $1$), then place $2$ by moving $k$ steps from $1$
and keep going until you place $n$. Then define $\nu_{k,n} = (\lambda_{k,n})_\circ$.
We call the permutation constructed in this way the \emph{natural permutation}
(\emph{corresponding to $k$})
in $\symS_n$.

\item If $k$ is a divisor of $n$ we write $\delta_{k|n} = \nu_{k,n}$ and call
$\delta_{k|n}$ the \emph{divisor permutation} (\emph{corresponding to $k|n$})
in $\symS_n$.

\end{enumerate}
\end{definition}
The condition that $k$ be coprime to $n+1$ is a necessary and sufficient condition
for constructing $\nu_{k,n}$.
The reason for calling these permutations \emph{natural}
is that the permutation $\nu_{k,n}$ behaves like the natural number $k$ when multiplied
with other natural permutations (see Proposition \ref{prop:grouphom} below).
The construction of the natural permutations is slightly reminiscent of the Josephus problem,
see e.g., \cite{S02}.
\begin{example}
Let $n = 6$. The coprime integers to $n+1 = 7$ are $1,2,3,4,5$ and $6$.
We construct $\lambda_{1,6}$ as follows:
\[
0\underline{\phantom{123456}} = 01\underline{\phantom{23456}}
= 012\underline{\phantom{3456}} = \dotsm = 0123456,
\]
so $\nu_{1,6} = 123456 = \delta_{1|6}$. Next we construct $\lambda_{2,6}$:
\[
0\underline{\phantom{415263}} =
0\underline{\phantom{4}}1\underline{\phantom{5263}} =
0\underline{\phantom{4}}1\underline{\phantom{5}}2\underline{\phantom{63}} =
0\underline{\phantom{4}}1\underline{\phantom{5}}2\underline{\phantom{6}}3 =
041\underline{\phantom{5}}2\underline{\phantom{6}}3 =
04152\underline{\phantom{6}}3 =
0415263,
\]
so $\nu_{2,6} = 415263 = \delta_{2|6}$. Similarly we get
\begin{align*}
\nu_{3,6} &= 531642 = \delta_{3|6}, \\
\nu_{4,6} &= 246135, \\
\nu_{5,6} &= 362514, \\
\nu_{6,6} &= 123456 = \delta_{6|6}.
\end{align*}
\end{example}
Below we give list of all the natural permutations in $\symS_1$ to $\symS_{10}$

\noindent
Note that by definition the location of $1$ in $\nu_{k,n}$ and $\delta_{k|n}$
is $k$, or equivalently, the first letter of inverse of these permutations is $k$.
They also have other good properties:

\begin{lemma} \label{lem:incr}
\text{}
\begin{enumerate}

\item For a natural permutation $\nu = \nu_{k,n}$ the difference between
subsequent letters,
\[
j = \nu(\ell+1) - \nu(\ell)
\]
is independent of $\ell$ and equals the first letter of $\nu$.

\item  If $\nu$ is a divisor permutation then $j = - n/k \bmod{(n+1)}$.
In general this difference is the smallest positive integer $j$
such that $kj = 1 \bmod{(n+1)}$.

\end{enumerate}
\end{lemma}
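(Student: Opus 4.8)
The plan is to first extract from Definition~\ref{toricdef:naturaldivisor} an explicit closed form for $\nu_{k,n}$, and then read off both parts of the lemma as elementary consequences of modular arithmetic. Since $k$ is coprime to $n+1$, it has a multiplicative inverse modulo $n+1$; I write $\bar k$ for the representative of this inverse in $\{1,\dots,n\}$. Normalizing the circular permutation $\lambda_{k,n}$ so that $0$ sits at position $0$, the construction places the letter $m$ at position $mk \bmod (n+1)$, because each successive integer is obtained by advancing $k$ steps around the circle. Inverting the position-to-value correspondence, the letter occupying position $p$ is the unique $m$ with $mk \equiv p \pmod{n+1}$, namely $m \equiv p\,\bar k \pmod{n+1}$. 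Reading $\lambda_{k,n}$ from $0$ and discarding the leading $0$ then yields
\[
\nu_{k,n}(\ell) \equiv \ell\,\bar k \pmod{n+1}, \qquad \ell = 1, \dots, n,
\]
where the right-hand side is taken in $\{1,\dots,n\}$ (it is never $\equiv 0$, since $\gcd(\bar k, n+1)=1$ and $1 \le \ell \le n$). I would sanity-check this formula against the explicit values of $\nu_{k,6}$ computed in the example preceding the lemma.

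Given this formula, part~(1) is immediate: subtracting consecutive letters modulo $n+1$ gives
\[
\nu(\ell+1) - \nu(\ell) \equiv (\ell+1)\bar k - \ell \bar k \equiv \bar k \pmod{n+1},
\]
a congruence class manifestly independent of $\ell$, while the first letter is $\nu(1) \equiv \bar k$. Hence the common difference equals the first letter, as claimed.

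For part~(2), the common difference $j \equiv \bar k$ is by definition the inverse of $k$ modulo $n+1$, so it is the smallest positive integer with $kj \equiv 1 \pmod{n+1}$, which is the general statement. When $\nu$ is a divisor permutation we have $k \mid n$, so $n/k$ is an integer, and
\[
k \cdot \left(-\tfrac{n}{k}\right) = -n \equiv 1 \pmod{n+1};
\]
thus $-n/k$ is also an inverse of $k$ modulo $n+1$, and therefore $j \equiv -n/k \pmod{n+1}$. The only real care needed in the whole argument is the translation of the circular construction into the formula --- in particular, getting the modular inverse on the correct side when passing from ``place $m$ at position $mk$'' to ``the letter at position $\ell$ is $\ell\,\bar k$'', and accounting correctly for the reading-from-$0$ step that defines $(\lambda_{k,n})_\circ$. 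Once that bookkeeping is pinned down and confirmed on the example, both parts reduce to one-line verifications.
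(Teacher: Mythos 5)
Your proof is correct and follows essentially the same route as the paper: both rest on the closed form ``the letter at position $\ell$ is $\ell k^{-1} \bmod (n+1)$'' (the paper writes this inverse explicitly as $(\ell + (n+1)s_\ell)/k$ with $s_\ell$ minimal, rather than via $\bar k$), from which both parts follow by one-line modular arithmetic. If anything, your version is slightly more careful than the paper's, since you note explicitly that the difference $\nu(\ell+1)-\nu(\ell)$ is constant only as a residue class modulo $n+1$ (as integers it oscillates between $\bar k$ and $\bar k - (n+1)$), a point the paper's integer formulation with $s_{\ell+1}-s_\ell$ glosses over.
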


\begin{proof}
\begin{enumerate}

\item The letter that lands at $\ell$ equals
\[
\frac{\ell + (n+1)s_\ell}{k}
\]
where $s_\ell$ is chosen as the least integer to make this an integer.
The letter that lands at $\ell+1$ equals
\[
\frac{\ell+1 + (n+1)s_{\ell+1}}{k}
\]
where $s_{\ell+1}$ is chosen as the least integer to make this an integer.
The difference is
\[
\frac{1-(n+1)(s_{\ell+1}-s_\ell)}{k}
\]
which is independent of $\ell$. This difference must equal the first letter
since if we add $0$ at the front the difference is the same as the first element.

\item This can be done by examining the difference above: If $k | n$ then
we choose $s_{\ell+1} = s_\ell - 1$ and the difference becomes $n/k$. The
general case is similar.

\end{enumerate}
\end{proof}

In view of the last Lemma we will call the first letter of a natural
permutation $\nu$ the \emph{increment} of the permutation and denote
it by $j_\nu$ or just $j$ if there are no other natural permutations around.

\begin{lemma} \label{lem:revnat}
\text{}
\begin{enumerate}

\item $\nu_{k,n}^\rmr = \nu_{n+1-k,n} = \nu_{k,n}^\rmc$.
\item $\nu_{k,n}^\rmi = \nu_{j,n}$, where $j$ is the increment of $\nu_{k,n}$.

\end{enumerate}

\end{lemma}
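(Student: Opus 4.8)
The plan is to reduce both statements to the explicit arithmetic description of the natural permutations that is already encoded in Lemma \ref{lem:incr}. Writing $j$ for the increment of $\nu_{k,n}$, part (1) of that lemma says that the one-line notation of $\nu_{k,n}$ is an arithmetic progression modulo $n+1$ with common difference $j$, starting from the implicit $0$ placed in front; telescoping then gives the closed form $\nu_{k,n}(\ell) \equiv \ell j \pmod{n+1}$ for $\ell = 1, \dots, n$, where the residue is taken as its representative in $\{1, \dots, n\}$. Part (2) of Lemma \ref{lem:incr} identifies $j$ as the multiplicative inverse of $k$, i.e.\ $kj \equiv 1 \pmod{n+1}$. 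Once this formula is in hand, both claims become one-line congruence computations, and I would present them in exactly that form.

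For part (1) I would compute the two sides directly. Since $\pi^\rmr(\ell) = \pi(n+1-\ell)$ and $\pi^\rmc(\ell) = n+1 - \pi(\ell)$, the closed form yields $\nu_{k,n}^\rmr(\ell) \equiv (n+1-\ell)j \equiv -\ell j$ and $\nu_{k,n}^\rmc(\ell) \equiv -\ell j \pmod{n+1}$, so the reverse and the complement already agree entry by entry. To match these with $\nu_{n+1-k,n}$ I would note that $n+1-k \equiv -k$, so its increment is $(n+1-k)^{-1} \equiv -k^{-1} \equiv -j \pmod{n+1}$, giving $\nu_{n+1-k,n}(\ell) \equiv -\ell j$ as well. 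Hence all three one-line notations coincide.

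For part (2) the inverse permutation sends a value $v$ to the position $\ell$ with $\ell j \equiv v$, that is $\ell \equiv v k \pmod{n+1}$, so $\nu_{k,n}^\rmi(v) \equiv v k \pmod{n+1}$. On the other hand $\nu_{j,n}$ has increment $j^{-1} \equiv k$, so $\nu_{j,n}(v) \equiv v k \pmod{n+1}$ by the same closed form, and the two agree. Throughout, the only points needing care are the bookkeeping between residues modulo $n+1$ and their representatives in $\{1, \dots, n\}$ — most visibly in the complement, where one uses $n+1-r \equiv -r$ — and checking that the permutations named on the right are in fact defined. I expect this last verification to be the only genuine obstacle, and it is minor: $n+1-k$ and $j \equiv k^{-1}$ are coprime to $n+1$ precisely when $k$ is. The geometric picture (reversing the circle reverses the step direction, sending step $k$ to step $n+1-k$, while inverting swaps the roles of position and value) can be invoked as a sanity check, but the congruence computation is the cleanest route and the one I would write up.
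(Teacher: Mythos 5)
Your proposal is correct and takes essentially the same approach as the paper, whose entire proof is the one-line remark that the lemma ``follows directly from Definition \ref{toricdef:naturaldivisor} and Lemma \ref{lem:incr}'' together with the observation that $k$ is coprime to $n+1$ if and only if $n+1-k$ is. The closed form $\nu_{k,n}(\ell) \equiv \ell j \pmod{n+1}$ with $kj \equiv 1 \pmod{n+1}$, and the resulting congruence computations, are exactly the details the paper leaves implicit, and your well-definedness checks match its parenthetical coprimality note.
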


\begin{proof}
This follows directly from Definition \ref{toricdef:naturaldivisor} and Lemma \ref{lem:incr}.
Note that $k$ is coprime to $n+1$ if and only if $n+1-k$ is coprime
to $n+1$.
\end{proof}

\begin{lemma}
Let $\nu = \nu_{k,n}$ be a natural permutation.
\begin{enumerate}

\item $\nu_1 + \nu_n = n+1$.

\item If $k | n$ then $\nu_n = n/k$, and in general $\nu_n = -j \bmod{(n+1)}$, where $j$ is
the increment of $\nu$.

\end{enumerate}
\end{lemma}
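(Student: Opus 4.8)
The plan is to first extract a single closed formula for the values of $\nu = \nu_{k,n}$ from Lemma \ref{lem:incr}, and then read off both parts by specializing it at the last position. Write $\nu_\ell = \nu(\ell)$ for $\ell = 1, \dotsc, n$ and let $j = j_\nu = \nu_1$ be the increment. By Lemma \ref{lem:incr}(1) the step $\nu_{\ell+1} - \nu_\ell$ is independent of $\ell$ and is congruent to $j$ modulo $n+1$, so a one-line induction on $\ell$ (base case $\nu_1 = j$, inductive step $\nu_{\ell+1} \equiv \nu_\ell + j \equiv \ell j + j$) would give the closed form
\[
\nu_\ell \equiv \ell\, j \pmod{n+1}, \qquad \ell = 1, \dotsc, n.
\]
Throughout I would keep in mind that $\nu$ is a permutation of $\dbrac{1,n}$, so each $\nu_\ell$ is represented by the unique element of $\{1,\dotsc,n\}$ in its residue class; the residue $0$ never occurs because $j$ is coprime to $n+1$.

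For part (2) I would specialize this formula at $\ell = n$. Since $n \equiv -1 \pmod{n+1}$, it yields $\nu_n \equiv -j \pmod{n+1}$, which is exactly the stated general formula once we recall that $j$ is the increment of $\nu$. For the divisor case $k \mid n$, Lemma \ref{lem:incr}(2) supplies $j \equiv -n/k \pmod{n+1}$, so that $\nu_n \equiv -j \equiv n/k \pmod{n+1}$; because $n/k$ is a positive integer lying in $\{1,\dotsc,n\}$, the congruence is forced to be the equality $\nu_n = n/k$.

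Part (1) would then follow at once. We have $\nu_1 = j$ and, from the previous paragraph, $\nu_n \equiv -j \pmod{n+1}$, so $\nu_1 + \nu_n \equiv 0 \pmod{n+1}$. Both summands lie in $\{1,\dotsc,n\}$, so their sum lies in $\{2,\dotsc,2n\}$, and the only multiple of $n+1$ in that range is $n+1$ itself. Hence $\nu_1 + \nu_n = n+1$.

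The argument is entirely routine modular arithmetic, so I do not anticipate a genuine obstacle; the single point requiring care is the bookkeeping of residue ranges, that is, consistently representing each $\nu_\ell$ in $\{1,\dotsc,n\}$ rather than $\{0,\dotsc,n\}$. It is precisely this normalization that upgrades the congruences $\nu_n \equiv -j$ and $\nu_1 + \nu_n \equiv 0$ into the exact equalities $\nu_n = n/k$ and $\nu_1 + \nu_n = n+1$. (As a consistency check one may note that for $\nu_{2,6}$ one has $j = 4$, and indeed $\nu_6 = 3 \equiv -4 \pmod 7$ with $\nu_1 + \nu_6 = 4 + 3 = 7$.)
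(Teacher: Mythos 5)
Your proof is correct, but it takes a genuinely different route from the paper's. The paper proves the lemma in one line via the reversal symmetry: by Lemma \ref{lem:revnat}, $\nu_n = (\nu^\rmr)_1$ and $\nu^\rmr = \nu_{n+1-k,n}$ is itself a natural permutation, so its first letter is its increment, namely the smallest positive solution of $(n+1-k)x \equiv 1 \pmod{n+1}$; since $kj \equiv 1$, this is $x = n+1-j$, giving both parts at once. You instead bypass Lemma \ref{lem:revnat} entirely and upgrade Lemma \ref{lem:incr}(1) by induction to the closed form $\nu_\ell \equiv \ell j \pmod{n+1}$, then specialize at $\ell = n \equiv -1$. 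Your version is more self-contained and actually proves something stronger (an explicit formula for every entry of $\nu$, not just the last), at the cost of a short induction; the paper's version is shorter because it reuses the already-established symmetry machinery. One point you handle that the paper glosses over, and which deserves the care you give it: the constant ``difference'' in Lemma \ref{lem:incr}(1) is only constant modulo $n+1$ (e.g.\ in $\nu_{2,6} = 415263$ the literal differences alternate between $-3$ and $4$), so your explicit bookkeeping of representatives in $\{1,\dotsc,n\}$ --- which turns the congruences $\nu_n \equiv -j$ and $\nu_1 + \nu_n \equiv 0$ into the exact equalities claimed --- is exactly the right thing to make precise, and your use of $\gcd(j,n+1)=1$ to rule out the residue $0$ is correct since $kj \equiv 1 \pmod{n+1}$ makes $j$ a unit.
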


\begin{proof}
Note that $\nu_n = (\nu^\rmr)_1$ and the first letter of $\nu^r$ is
the smallest positive solution of $(n+1-k)x \equiv 1 \bmod{(n+1)}$. Clearly
this is $n+1-j$.
\end{proof}

\begin{lemma} \label{lem:divpermssubseq}
If $\delta_{k|n}$ is a divisor permutation then it consists of $k$ increasing
subsequences of length $n/k$. These sequences are 
\begin{align*}
& 1,2,\dotsc,n/k, \\
& n/k+1,n/k+2, \dotsc, 2n/k, \\
& \dotsc, \\
& n-n/k+1, n-n/k+2,\dotsc, n.
\end{align*}
They lie inside the permutation in such a way that the $\ell$-th one
starts at $k-\ell+1$ and is placed at locations $k-\ell+1+dk$, where
$d = 0,\dotsc,n/k-1$.
\end{lemma}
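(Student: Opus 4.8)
The plan is to make the one-line notation of $\delta_{k|n}$ completely explicit modulo $n+1$ and then simply invert it to read off where each letter sits. Write $\delta = \delta_{k|n}$ and $m = n/k$. By Lemma \ref{lem:incr} the increment of $\delta$ is $j \equiv -m \pmod{n+1}$ and consecutive letters differ by $j$; since the first letter equals the increment, telescoping gives
\[
\delta(\ell) \equiv \ell j \equiv -\ell m \pmod{n+1}, \qquad \ell = 1, \dotsc, n,
\]
where the right-hand side is reduced to lie in $\{1, \dotsc, n\}$ (it is never $0$, as $\gcd(m,n+1)=1$ because $m \mid n$, so that $\gcd(m,n+1)\mid\gcd(n,n+1)=1$).

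Next I would locate the letters. Since $mk = n \equiv -1 \pmod{n+1}$, the inverse of $m$ is $-k$, so solving $-\ell m \equiv v$ shows that the value $v$ occupies position $vk \bmod (n+1)$; this is consistent with $\delta^{-1} = \nu_{j,n}$ from Lemma \ref{lem:revnat}. The core computation is then short: the $\ell$-th prescribed block is the set of values $v = (\ell-1)m + s$ for $s = 1, \dotsc, m$, and
\[
vk = (\ell-1)\,mk + sk \equiv -(\ell-1) + sk = sk - \ell + 1 \pmod{n+1},
\]
which, on setting $d = s-1$, is precisely $k - \ell + 1 + dk$ with $d = 0, \dotsc, m-1$, the positions claimed.

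The one step that genuinely needs care --- and where a sign or range slip is most likely to creep in --- is to verify that these representatives require no further reduction, i.e. that $k - \ell + 1 + dk$ already lies in $\{1, \dotsc, n\}$ for every $\ell \in \{1, \dotsc, k\}$ and $d \in \{0, \dotsc, m-1\}$. I would bound it directly: for fixed $\ell$ it is smallest at $d = 0$, giving $k - \ell + 1 \geq 1$, and largest at $d = m-1$, giving $mk - \ell + 1 = n - \ell + 1 \leq n$. Since two elements of $\{1, \dotsc, n\}$ that are congruent modulo $n+1$ must be equal, the position of $v$ is exactly $k - \ell + 1 + dk$. Finally, as $d$ grows both $v$ and its position increase, so each block is an increasing subsequence; and as the values $(\ell-1)m + s$ run bijectively over $\{1, \dotsc, n\}$, the $k$ blocks partition the positions, which finishes the proof.
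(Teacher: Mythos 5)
Your proof is correct, and it takes a genuinely more substantive route than the paper's, which disposes of this lemma with the single sentence ``This is obvious from the construction of the divisor permutations.'' Where the paper implicitly relies on the circular construction --- letter $v$ is placed at circular position $vk \bmod (n+1)$, so when $k \mid n$ the steps of size $k$ partition the positions into the blocks visible in the one-line notation --- you instead derive the closed congruence $\delta(\ell) \equiv -\ell m \pmod{n+1}$ (with $m = n/k$) from Lemma \ref{lem:incr}, invert it using $mk \equiv -1 \pmod{n+1}$, and then verify that the representatives $k-\ell+1+dk$ already lie in $\{1,\dotsc,n\}$, so no modular reduction disturbs the claimed positions. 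That last verification is precisely the point the paper's ``obvious'' proof leaves unexamined, and your remark that $\gcd(m,n+1)=1$ because $m \mid n$ (so the formula never produces $0$) is a detail it quietly uses. The trade-off: the paper's appeal to the construction is shorter and transparent to a reader who has internalized the picture of $\lambda_{k,n}$, while your computation yields as by-products the explicit formulas $\delta(\ell) \equiv -\ell m$ and ``value $v$ sits at position $vk \bmod (n+1)$,'' which make both this lemma and its later use in the proof of Theorem \ref{thm:divisors} mechanically checkable; your argument also shows directly that the $k$ blocks partition the set of positions, rather than inferring it from the construction.
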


\begin{proof}
This is obvious from the construction of the divisor permutations.
\end{proof}

\begin{example}
Consider the divisor permutation 
\[
\delta_{3|12} = 9\ull{5}1(10)\ull{6}2(11)\ull{7}3(12)\ull{8}4,  
\]
it consists of $3$ increasing subsequences of lenght $12/3=4$. We underline
the second one.
\end{example}

\begin{lemma} \label{lem:toricclasssize1}
The natural permutations are the permutations with a toric class
of size $1$.
\end{lemma}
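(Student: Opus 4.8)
The plan is to unwind the definition of toric class size and reduce the statement to a single clean property of the circular permutation $\pi^\circ$. A toric class $\pi^\circ_\circ$ has size $1$ exactly when $\pi \oplus m = \pi$ for every $m$, and since $\oplus$ is additive this holds iff $\pi \oplus 1 = \pi$. Because $\lambda \mapsto \lambda_\circ$ is a bijection from circular permutations of $\dbrac{0,n}$ (read from $0$) onto $\symS_n$, the equality $\pi \oplus 1 = \pi$ is equivalent to saying that $\pi^\circ$ and $\pi^\circ \oplus 1$ coincide as circular permutations; that is, $\pi$ is \emph{cyclically invariant under $\oplus 1$} in the sense defined before Theorem \ref{toricthm:fromSteggall}. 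So I will prove: $\pi$ is a natural permutation iff $\pi^\circ$ is cyclically invariant under $\oplus 1$.

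For the forward direction I would start from the construction in Definition \ref{toricdef:naturaldivisor}. In $\lambda_{k,n}$ the value $i$ sits at position $ik \bmod (n+1)$, so consecutive values are always $k$ steps apart. Writing $j$ for the increment of $\nu_{k,n}$ (so $kj \equiv 1 \bmod (n+1)$, by Lemma \ref{lem:incr}), the value at position $p$ is $pj$, and hence after $\oplus 1$ the value at position $p$ is $pj + 1 = (p+k)j$, i.e.\ the value that $\lambda_{k,n}$ placed at position $p+k$. Thus $\nu_{k,n}^\circ \oplus 1$ is $\nu_{k,n}^\circ$ rotated rigidly by $k$ positions, so the two are the same circular permutation and $\nu_{k,n}$ is cyclically invariant under $\oplus 1$.

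For the converse, suppose $\pi^\circ \oplus 1 = \pi^\circ$ as circular permutations, and let $\rho$ be the rotation carrying $\pi^\circ$ onto $\pi^\circ \oplus 1$. Reading positions in $\ZZ/(n+1)\ZZ$, the operation $\oplus 1$ relabels $v \mapsto v+1$ while keeping every value in its slot, so after undoing $\rho$ the position of $v+1$ must be the $\rho$-image of the position of $v$. Hence there is a single step $s$ (the rotation amount of $\rho$) such that for all $v$ the value $v+1$ lies exactly $s$ positions past the value $v$; starting from the slot holding $0$ this places $v$ at position $p_0 + vs$, and since these must exhaust all $n+1$ slots we get $\gcd(s,n+1)=1$. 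This is precisely the recipe defining $\lambda_{s,n}$, so $\pi = \nu_{s,n}$ is the natural permutation corresponding to $s$.

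The step I expect to require the most care is the bookkeeping in the converse: pinning down that the single rotation $\rho$ forces a \emph{constant} gap $s$ between consecutive values (rather than merely some permutation of gaps), and that this $s$ is automatically coprime to $n+1$. Once the constant-gap structure is isolated, matching it against Definition \ref{toricdef:naturaldivisor} is immediate. Finally, since distinct residues $k$ coprime to $n+1$ place the value $1$ at distinct positions and hence yield distinct $\nu_{k,n}$, this identifies the size-$1$ classes with the $\phi(n+1)$ natural permutations, recovering the count recorded after Theorem \ref{toricthm:fromSteggall}.
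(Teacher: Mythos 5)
Your proposal is correct and follows essentially the same route as the paper: the paper likewise reduces toric class size $1$ to the single condition $\pi^\circ \oplus 1 \equiv \pi^\circ$ and then observes that the distance between consecutive values $v$ and $v+1$ is forced to be a constant $s$, which is exactly the recipe of Definition \ref{toricdef:naturaldivisor}. Your treatment is just more explicit where the paper is terse --- you verify the forward direction by the position formula $i \mapsto ik \bmod (n+1)$ instead of calling it ``easy to see,'' and you make the rotation $\rho$ and the coprimality $\gcd(s,n+1)=1$ explicit --- but no new idea is involved.
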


\begin{proof}
It is easy to see from the construction of the natural permutations that
they have a toric class of size $1$. So assume $\pi$ is a permutation
with toric class size $1$, which is equivalent to $\pi \oplus 1 = \pi$,
which is equivalent to $\pi^\circ \oplus 1 \equiv \pi^\circ$. Now let
$\ell$ be the distance between $0$ and $1$ in $\pi^\circ$. This becomes
the distance between $1$ and $2$ in $\pi^\circ \oplus 1 = \pi^\circ$.
By iterating this we have shown that the distance between $k$ and $k+1
\bmod{(n+1)}$ is fixed. This implies that $\pi$ is a natural permutation.
\end{proof}

\begin{proposition} \label{prop:grouphom}
The natural permutations multiply like the natural numbers modulo $n+1$, i.e.,
$\nu{k,n} \circ \nu_{k',n} = \nu_{kk',n}$, where $kk'$
has been reduced modulo $n+1$.
\end{proposition}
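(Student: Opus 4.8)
The plan is to reduce the entire statement to the observation that a natural permutation is nothing but multiplication by a unit modulo $n+1$. Lemma~\ref{lem:incr} already supplies the crucial data: for $\nu = \nu_{k,n}$ the successive differences $\nu(\ell+1)-\nu(\ell)$ are all equal to a single increment $j_k$, this increment is the first letter $\nu_{k,n}(1) = j_k$, and $j_k$ is the least positive integer with $k j_k \equiv 1 \pmod{n+1}$; in other words $j_k \equiv k^{-1} \pmod{n+1}$. So the first thing I would do is promote this to a closed formula.

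Setting $\nu_{k,n}(0) = 0$ (the letter dropped when reading $\lambda_{k,n}$ from $0$), the constant-difference property together with $\nu_{k,n}(0)=0$ gives, by telescoping,
\[
\nu_{k,n}(\ell) \equiv \ell\, j_k \equiv \ell\, k^{-1} \pmod{n+1}, \qquad \ell = 0,1,\dots,n.
\]
Since $k^{-1}$ is a unit and $\ell \not\equiv 0$ for $\ell \in \{1,\dots,n\}$, the right-hand side lands in $\{1,\dots,n\}$, so this is a genuine equality of one-line entries and identifies $\nu_{k,n}$ with the map that multiplies the position by $j_k = k^{-1}$ modulo $n+1$. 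I would then record, as a one-line consequence of multiplicativity of inverses in $(\ZZ/(n+1)\ZZ)^\times$, that the increments are themselves multiplicative:
\[
j_{kk'} \equiv (kk')^{-1} \equiv k^{-1}(k')^{-1} \equiv j_k\, j_{k'} \pmod{n+1}.
\]
Here I also use that $kk'$ is coprime to $n+1$ whenever $k$ and $k'$ are, so that $\nu_{kk',n}$ is defined in the first place (Definition~\ref{toricdef:naturaldivisor}).

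With the formula in hand the proposition is immediate: for every position $\ell$,
\[
(\nu_{k,n}\circ\nu_{k',n})(\ell)
= \nu_{k,n}\bigl(\nu_{k',n}(\ell)\bigr)
\equiv j_k\bigl(j_{k'}\,\ell\bigr)
= (j_k j_{k'})\,\ell
\equiv j_{kk'}\,\ell
\equiv \nu_{kk',n}(\ell) \pmod{n+1},
\]
and since all of these quantities lie in $\{1,\dots,n\}$ the congruences are equalities, which is exactly $\nu_{k,n}\circ\nu_{k',n} = \nu_{kk',n}$. I do not expect a real obstacle here: the genuine content is the recognition that the natural permutations are precisely the ``multiplication by a unit'' maps, after which the claim is just associativity of multiplication in $(\ZZ/(n+1)\ZZ)^\times$. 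The only points that demand care are bookkeeping ones: extending the formula with the convention $\nu_{k,n}(0)=0$, checking that no value ever collapses to $0$ so that everything stays a permutation of $\{1,\dots,n\}$, and noting that because these maps all commute the order of composition is irrelevant (so the result is insensitive to one's convention for $\circ$).
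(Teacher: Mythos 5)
Your proof is correct and takes essentially the same route the paper intends: the paper's entire proof is the remark that the claim is ``easy to see using the increment,'' and your closed formula $\nu_{k,n}(\ell) \equiv \ell\, j_k \equiv \ell\, k^{-1} \pmod{n+1}$, obtained from Lemma~\ref{lem:incr}, is precisely that argument written out, reducing the proposition to multiplicativity of inverses in $(\ZZ/(n+1)\ZZ)^\times$. The extra bookkeeping you supply (the convention $\nu_{k,n}(0)=0$, the check that no value collapses to $0$, and commutativity making the composition convention immaterial) is sound and simply makes the paper's one-line proof explicit.
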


\begin{proof}
This is easy to see using the increment.
\end{proof}

\begin{example}
For example
\[
\delta_{2|6} \circ \delta_{2|6} = \nu_{4,6} \quad \textrm{ and } \quad
\nu_{4,6} \circ \nu_{5,6} = \delta_{6|6}.
\]
\end{example}

\begin{proposition} \label{toricprop:213}
The set $\eqAn{213}$ consists of the identity and
its reverse.
\end{proposition}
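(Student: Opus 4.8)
The plan is to show that $\eqAn{213}$ consists exactly of $\id = 12\dotsm n$ and $\id^\rmr = n(n\dash1)\dotsm 1$. Since toric equivalence is the relevant relation here, a permutation $\pi$ lies in $\eqAn{213}$ precisely when every permutation in the toric class $\pi^\circ_\circ$ avoids the classical pattern $213$. First I would verify the easy inclusion: both $\id$ and $\id^\rmr$ have small toric classes whose members can be checked by hand to avoid $213$ (for $\id$ the class is generated by the modular shifts $\pi\oplus m$, and one sees directly that none of these contain a $213$). This gives $\{\id,\id^\rmr\}\subseteq\eqAn{213}$.

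\textbf{The reverse inclusion.}
For the harder direction I would take a $\pi$ \emph{not} equal to $\id$ or $\id^\rmr$ and exhibit a torically equivalent permutation containing $213$. The key tool is the $\oplus m$ operation together with Theorem \ref{thm:substable}, which tells us $\eqAn{213}\subseteq\Aeqn{213}$, so it is enough to find \emph{some} $m$ for which $\pi\oplus m$ matches $213$. Concretely, I would argue that if $\pi$ avoids $213$ in every toric shift, then the circular permutation $\pi^\circ$ is extremely rigid. Reading $\pi^\circ$ around the circle, the condition ``no shift contains $213$'' forces the cyclic sequence of values to be either everywhere-increasing or everywhere-decreasing modulo $n+1$ in a controlled way; anything else produces three positions whose relative order, after an appropriate shift placing the smallest of them first, reads as $213$.

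\textbf{Locating the obstruction.}
The cleanest way to make this precise is to use the fact (from the proof of Proposition \ref{prop:orig} and the general philosophy of the toric section) that a toric class avoiding $213$ corresponds to a circular permutation with a very restricted local structure. I would look at two consecutive values in $\pi^\circ$, say where $i$ sits relative to $i+1$ and $i+2$; if the increments between successive values are not all equal, then there exist values $a<b<c$ positioned so that after shifting to read from the appropriate point the pattern $213$ appears. By Lemma \ref{lem:toricclasssize1}, the permutations whose shifts are maximally rigid (constant increment) are exactly the natural permutations, and among natural permutations only $\nu_{1,n}=\id$ and $\nu_{n,n}=\id^\rmr$ can globally avoid $213$ in every shift, the intermediate increments producing a $213$ in some shift.

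\textbf{Main obstacle.}
The hard part will be the bookkeeping in the reverse inclusion: translating ``$\pi\oplus m$ contains $213$ for some $m$'' into a clean combinatorial statement about the circular arrangement of values, and then ruling out every candidate except the two identities. I expect the natural way to organize this is to reduce, via Theorem \ref{thm:substable} and the symmetry Theorem \ref{thm:toricsymmetry}, to checking that any circular permutation other than the two ``straight-line'' ones has three appropriately placed entries forming a $213$ after a single well-chosen shift $\oplus m$. Making that choice of $m$ explicit — essentially shifting so that the middle value of the offending triple becomes the smallest available starting point — is the crux, and the rest is a direct verification.
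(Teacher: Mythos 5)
There is a genuine gap in your reverse inclusion, and it stems from misreading what Theorem \ref{thm:substable} actually provides. That theorem says $\eqAn{\bfp} \subseteq \Aeqn{\bfp}$, where $\Aeqn{\bfp}$ is the set of permutations avoiding $\bfp$ \emph{and every pattern torically equivalent to $\bfp$} --- it is a statement about the toric class of the \emph{pattern}, not (as you use it) a restatement of the definition that some shift $\pi \oplus m$ must contain $213$. The paper's entire hard direction is one line: compute the toric class of the pattern $213$, which is $\{2\dash1\dash3,\ 1\dash3\dash2,\ 3\dash1\dash2,\ 2\dash3\dash1\}$, so any $\pi \in \eqAn{213}$ must itself avoid all four of these classical patterns, and the only permutations avoiding all four are the monotone ones, $\id$ and $\id^\rmr$. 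No analysis of $\pi^\circ$, shifts, or increments is needed.

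Your substitute for this --- the claim that if the increments of $\pi^\circ$ are not all equal then some triple yields $213$ after a well-chosen shift --- is exactly the combinatorial content you would have to prove, and you explicitly defer it as ``the crux'' and ``a direct verification'' without carrying it out. As written it is not a proof. Moreover, your appeal to Lemma \ref{lem:toricclasssize1} is misplaced: that lemma characterizes permutations with singleton toric class (the natural permutations), but nothing established at that point shows a member of $\eqAn{213}$ must have constant increment or a singleton class; you would first need the very rigidity claim you left unproven. (Your easy inclusion is fine, and is in fact even easier than you suggest: $\id = \nu_{1,n}$ and $\id^\rmr = \nu_{n,n}$ have singleton toric classes by Lemma \ref{lem:toricclasssize1}, and each visibly avoids $213$.)
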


\begin{proof}
It is clear that the identity and its reverse are in this set.
By Theorem \ref{thm:substable} a permutation in this set must avoid
the classical patterns $2\dash1\dash3$, $1\dash3\dash2$, $3\dash1\dash2$
and $2\dash3\dash1$,
so the only possibilities are the identity and its reverse.
\end{proof}

\begin{theorem} \label{thm:totient}
Let $\bfp = \biv{\nivs{1\bl2}\bl\nivrs{3}}{\vinbs{2\bl1\bl3}}$.
The set $\eqAn{\bfp}$ consists of the natural permutations
in $\symS_n$.
In particular
\[
|\eqAn{\bfp}| = \phi(n+1),
\]
where $\phi$ is Euler's totient function,
and the map $\nu_{k,n} \mapsto k,  \eqAn{\bfp} \to U_{n+1}$
is a homomorphism of groups, where
$U_{n+1}$ is the group of units in $\ZZ/(n+1)\ZZ$.
\end{theorem}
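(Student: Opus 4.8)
The plan is to prove the set equality $\eqAn{\bfp}=\{\nu_{k,n}:\gcd(k,n+1)=1\}$ and then read off both the count and the group structure. First I would unwind the pattern: in the $(p,X,Y)$ notation we have $\bfp=(213,\varnothing,\{1,3\})$, so an occurrence of $\bfp$ in $\sigma\in\symS_n$ is a subsequence playing the roles $a,\,a-1,\,n$ in this positional order, where $a-1,a$ are consecutive values and $n$ is the global maximum. Writing $P(v)$ for the position of the value $v$, this says there exist such values with $P(a)<P(a-1)<P(n)$. For the inclusion ``natural $\subseteq\eqAn{\bfp}$'' it suffices, by Lemma \ref{lem:toricclasssize1} (the class of $\nu_{k,n}$ has size $1$), to check that $\nu_{k,n}$ itself avoids $\bfp$. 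By Lemma \ref{lem:incr} consecutive values satisfy $P(v+1)-P(v)\equiv k\pmod{n+1}$, so $P(a-1)-P(a)\equiv -k$; the relation $P(a)<P(a-1)$ forces this difference to be the positive representative $n+1-k$, giving $P(a-1)=P(a)+(n+1-k)>n+1-k=P(n)$, contradicting $P(a-1)<P(n)$. Hence $\nu_{k,n}$ avoids $\bfp$.

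The hard direction is ``$\eqAn{\bfp}\subseteq$ natural,'' and I would route it through the circular word $\pi^\circ$. The key observation is that $\pi\in\eqAn{\bfp}$ means \emph{every} shift $\pi\oplus m$ avoids $\bfp$, and since $3\in Y$ each such occurrence can be read directly on $\pi^\circ$: writing $Q(v)$ for the circular position of the value $v$ (so $Q(0)=0$), an occurrence of $\bfp$ in some $\pi\oplus m$ is exactly a clockwise cyclic order $Q(\omega),Q(\omega+1),Q(\alpha),Q(\alpha-1)$ for two consecutive value-pairs $\{\omega,\omega+1\}$ and $\{\alpha-1,\alpha\}$. Interpreting this on the circle, it says precisely that the arc $A_\omega=[Q(\omega),Q(\omega+1))$ is \emph{strictly nested} inside the arc $A_{\alpha-1}=[Q(\alpha-1),Q(\alpha))$; the range constraints $a-1\ge1$ and $a\le n-1$ come for free, since strictly nested arcs cannot share an endpoint and so the two pairs are automatically disjoint. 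Thus $\pi\in\eqAn{\bfp}$ if and only if no two of the $n+1$ consecutive-value arcs $A_v$ are strictly nested.

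The crux is the resulting combinatorial claim: for a bijection $Q$ of $\ZZ/(n+1)$, the arcs $A_v$ admit no strict nesting if and only if all gaps $g_v:=Q(v+1)-Q(v)\bmod(n+1)$ are equal. I would prove the substantive direction by a covering argument. First, every point of the circle lies in exactly $w:=\frac{1}{n+1}\sum_v g_v$ arcs: as we pass any point, exactly one arc ends and exactly one begins, so the covering number is constant, and summing lengths identifies it as $w$. Now take a shortest arc $A_\mu$, of length $\ell$. Its starting point $Q(\mu)$ lies in the interior of the $w-1$ other arcs that cover it, and non-nesting forces each of those to terminate strictly inside $A_\mu$, i.e.\ at one of its $\ell-1$ interior points; as these endpoints are distinct this yields $w-1\le \ell-1$. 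Hence every gap is at least $w$, and since the gaps average to $w$ they are all equal to $w$. Equal gaps give $Q(v)=vc\pmod{n+1}$ with $\gcd(c,n+1)=1$, i.e.\ the toric class has size $1$ and $\pi=\nu_{c,n}$ by Lemma \ref{lem:toricclasssize1}.

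Combining the two inclusions gives $\eqAn{\bfp}=\{\nu_{k,n}:\gcd(k,n+1)=1\}$. The natural permutations are indexed bijectively by the residues $k$ coprime to $n+1$, whence $|\eqAn{\bfp}|=\phi(n+1)$, and $\nu_{k,n}\mapsto k$ is a bijection onto $U_{n+1}$ which is a homomorphism by Proposition \ref{prop:grouphom}. I expect the only genuine obstacles to be the translation of the all-shifts toric condition into the clean nesting statement and the covering lemma that closes it; everything else is bookkeeping.
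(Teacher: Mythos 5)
Your proposal is correct, and on the hard inclusion it takes a genuinely different route from the paper. The paper works locally in one-line notation: Lemmas \ref{lem:1+x} and \ref{lem:z+x} show, by induction with a somewhat delicate case analysis against the shifted patterns $\bfp\oplus 1$, $\bfp\oplus 3$, that any $\pi\in\eqAn{\bfp}$ satisfies $\pi = x\,(2x)\,(3x)\dotsm \bmod (n+1)$ and hence is natural. You instead package avoidance by the entire toric class into a single global condition on $\pi^\circ$ --- and your translation to ``no two consecutive-value arcs $A_v=[Q(v),Q(v+1))$ strictly nested'' is correct, including the check that the degenerate coincidences $\alpha\in\{\omega,\omega+1,\omega+2\}$ (i.e.\ $a\in\{n,0,1\}$) are exactly the shared-value configurations excluded by strict nesting --- then close with a clean extremal argument: constant covering number $w$ equal to the average gap, a shortest arc of length $\ell$ giving $w-1\le\ell-1$, and averaging forcing all gaps equal. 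This is more structural and arguably more airtight than the paper's induction (whose Lemma \ref{lem:1+x} ends with ``by induction we can show\dots''), at the cost of the circular bookkeeping. One justification needs a one-line patch: for the successor arc $A_{\mu+1}$, non-nesting is \emph{not} what forces termination strictly inside $A_\mu$, because the containment $A_\mu\subset A_{\mu+1}$ shares the value $\mu+1$ (it is the excluded case $a=1$) and so is not forbidden by avoidance; the conclusion nevertheless holds there automatically, since $A_{\mu+1}$ starts at $Q(\mu+1)$, the right endpoint of $A_\mu$, and an arc of length at most $n$ cannot wrap past its own starting point, so if it covers $Q(\mu)$ its endpoint necessarily lands among the $\ell-1$ interior points. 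Two mis-citations, also harmless: the identity $P(v+1)-P(v)\equiv k \pmod{n+1}$ comes from the construction in Definition \ref{toricdef:naturaldivisor} (which gives $P(v)=vk$), not from Lemma \ref{lem:incr}, which concerns the increment $j$ with $kj\equiv 1 \bmod (n+1)$; and once you have $Q(v)=vc$ you may conclude $\pi=\nu_{c,n}$ directly from that definition, without Lemma \ref{lem:toricclasssize1}. The easy inclusion (singleton classes plus the position computation) and the endgame ($\phi(n+1)$ via the coprime indices, homomorphism via Proposition \ref{prop:grouphom}) are essentially the paper's.
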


\begin{proof}
We start by showing that the natural permutations avoid this pattern.
Fix such a permutation $\nu = \nu_{k,n}$ with $k$ coprime to $n+1$.
It suffices to show that given an integer $2 \leq \ell \leq n-1$ such
that $\ell-1$ occurs after $\ell$ in the permutation, then $n$ appears
before $\ell-1$. Let $j$ be the increment of $\nu$ so we can write
\[
\nu = j,2j,3j,\dotsc,mj,\dotsc,(m+h)j,\dots,nj \quad \bmod{(n+1)}
\]
where $mj = \ell \bmod{(n+1)}$ and $(m+h)j = \ell-1 \bmod{(n+1)}$. Then
we must have $hj = {-1} \bmod{(n+1)}$ which implies $hj = n \bmod{(n+1)}$ so
$n$ is at location $h$ in $\nu$ and therefore appears before $\ell-1$.

We must now show that any permutation that avoids this pattern must
be a natural permutation. This relies on Lemmas \ref{lem:1+x} and
\ref{lem:z+x}. They show that $\pi = x (2x) (3x) \dotsm$
and is therefore natural.
\end{proof}

\begin{lemma} \label{lem:1+x}
If $\pi$ is in the set $\eqAn{\biv{\nivs{1\bl2}\bl\nivrs{3}}{\vinbs{2\bl1\bl3}}}$ and
$\pi = x \dotsm 1y \dotsm$ then $y = 1+x \bmod{(n+1)}$.
\end{lemma}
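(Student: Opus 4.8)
The plan is to use that $\pi\in\eqAn{\bfp}$ means, by definition of the toric class, that \emph{every} shift $\pi\oplus m$ ($m=0,1,\dotsc,n$) avoids $\bfp=(213,\tom,\{1,3\})$, and to read this off the circular word $\pi^\circ=0\pi$. First I would unwind avoidance exactly as in the proof of Theorem~\ref{thm:totient}: a permutation $\sigma$ avoids $\bfp$ iff there is no letter $\ell$ occurring before $\ell-1$ which in turn occurs before the maximal letter. Writing $\sigma=\pi\oplus m$ and carrying the three positions back to $\pi^\circ$, such an occurrence becomes a quadruple of letters $A,B,B-1,A-1$ of $\dbrac{0,n}$ (two pairs of consecutive values, with $B-A\not\equiv 0,\pm1 \pmod{n+1}$) that are met in this cyclic order when $\pi^\circ$ is read clockwise. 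Ranging over all $m$ therefore converts membership in $\eqAn{\bfp}$ into the single statement: \emph{$\pi^\circ$ contains no clockwise configuration $A,B,B-1,A-1$}. (This is Theorem~\ref{thm:substable} made explicit.)

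Next I would record the two adjacencies the hypothesis provides. From $\pi_1=x$, the letter $0$ is immediately followed by $x$ in $\pi^\circ$; from the factor $1y$, the letter $1$ is immediately followed by $y$. These pin $x,y$ as the clockwise successors of $0,1$ and cut the circle into the clockwise arcs $0,\,x,\,L,\,1,\,y,\,R$, with $L$ the block strictly between $x$ and $1$ and $R$ the block strictly between $y$ and $0$.

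The heart of the argument is then to locate $x+1$ and show it equals $y$; I assume $2\le x\le n-1$, the boundary values $x\in\{0,1,n\}$ being degenerate. I would feed the configuration rule well-chosen shifts. The shift $\oplus(n-x)$ turns $x$ into the maximum $n$ and the pair $0,1$ into the consecutive pair $n-x,\,n-x+1$; an occurrence of $\bfp$ then materialises precisely when the images of $1,0,x$ appear in that order, which in $\pi^\circ$ is the clockwise configuration $x+1,1,0,x$, and this happens exactly when $x+1\in L$. Since no occurrence is permitted, $x+1\notin L$; as $x+1$ cannot be $0,x$ or $1$ in this range, we are left with $x+1\in\{y\}\cup R$. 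A second step, using the adjacency $1\to y$ in place of $0\to x$ (most cleanly after passing to a suitable reverse/complement image via Theorem~\ref{thm:toricsymmetry}, which sends the successor-of-$1$ configuration to a favourable successor-of-$0$ one), rules out $x+1\in R$, leaving $y=x+1\pmod{n+1}$.

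The main obstacle I expect is not the idea but the bookkeeping: transporting each shift back to a cyclic configuration and checking that the configuration appears \emph{exactly} when $x+1$ sits in the prescribed arc requires careful tracking of the modular reduction and of which letter plays the maximum. Moreover the rank restriction $B-A\not\equiv 0,\pm1$ suppresses the direct configuration whenever $x$ or $y$ lies within distance two of $0,1$ or $n$; these few small cases (notably $y=2$, which should force $x=1$) must be verified by hand and are the part most likely to need a separate short argument or to be folded into the companion Lemma~\ref{lem:z+x}.
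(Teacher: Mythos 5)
Your reduction of membership in $\eqAn{\bfp}$ to the cyclic condition on $\pi^\circ$ is correct and nicely done: an occurrence of $\bfp=(213,\tom,\{1,3\})$ in some shift $\pi\oplus m$ pulls back exactly to a clockwise configuration $A,B,B-1,A-1$ with $B-A\not\equiv 0,\pm1 \pmod{n+1}$, and your first step (the shift $\oplus(n-x)$, i.e.\ the configuration $x+1,1,0,x$, forcing $x+1\notin L$) is precisely the paper's opening move, phrased there as \lqq otherwise we have the pattern $\bfp\oplus 3$\rqq. The gap is in your second step. The proposed symmetry argument cannot rule out $x+1\in R$. One can check that $\bfp^{\rmr\rmc}=\bfp\oplus 1$, $\bfp^{\rmr}=\bfp\oplus 2$ and $\bfp^{\rmc}=\bfp\oplus 3$, so the basic symmetries permute the shifts of $\bfp$ and hence map the family of forbidden configurations onto itself; applying reverse or complement and re-reading the configuration criterion in the image therefore yields exactly the constraints you already have on $\pi^\circ$, never new ones. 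And on the letters you actually control, no forbidden configuration exists: with cyclic order $0,x,\dotsc,1,y,\dotsc,x+1,\dotsc$ the only available consecutive-value pairs are $(1,0)$ and $(x+1,x)$, and neither required interleaving ($x+1,1,0,x$ nor $1,x+1,x,0$) occurs. So no amount of symmetry juggling of the letters $0,1,x,x+1,y$ can produce a contradiction; additional letters must be located.

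That is exactly what the paper's proof does, and it is the bulk of the argument rather than a residue of small cases. Assuming $y\neq x+1$ (so $x+1\in R$ by your step 1), the paper pins down $y$ by two inductions on its value: first $y\neq 2$ (else $\pi\ominus 1$ contains $\bfp\oplus 1$), then $y\neq 3$ (which first forces the letter $2$ to lie between $x$ and $1$, then shifts), and similarly $y\neq k$ for all $k<x$, forcing $y>x+1$; then $y\neq x+2$ (the triple $1,x+2,x+1$ is an occurrence of $\bfp\oplus 1$), which traps $x+2$ in the arc between $x+1$ and $0$, and inductively $y\neq x+3,\dotsc$, each step trapping one more letter in that arc, until all values are exhausted and the contradiction $y=x+1$ follows. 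Your proposal defers this entire mechanism to the phrase \lqq rules out $x+1\in R$\rqq\ and to \lqq a few small cases\rqq; as it stands, the second half of the lemma is unproved. (Minor points: the criterion you invoke is the definition of the toric class together with Lemma \ref{lem:oplus}, not Theorem \ref{thm:substable}; and the boundary case $x=n$ is not vacuous bookkeeping --- the paper disposes of it by noting $\pi=\id^\rmr$, after which the hypothesis has no letter $y$.)
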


\begin{proof}
Note that if $x = n$ then $\pi = \id^\rmr$, so we can assume that
$x \neq n$.
Assume $y \neq 1+x \bmod{(n+1)}$. Then $1+x$ must appear after
$y$ in $\pi$, otherwise we have the pattern $\bfp \oplus 3$. 

\begin{figure}[htbp]
\begin{center}
\begin{tikzpicture}[auto,bend right]
\node (0) at (90:1) {$0$};
\node (x) at (76:1) {$x$};
\node (1) at (-30:1) {$1$};
\node (y) at (-54:1) {$y$};
\foreach \p in {-78,-102,-126,-174,-198,...,-264,-312,-336,-360}
	\node (\p) at (\p:1) {$\cdot$};
\node (1x) at (-150:1) {$1+x$};
\end{tikzpicture}
\caption{The circular permutation $\pi^\circ$.}
\end{center}
\end{figure}
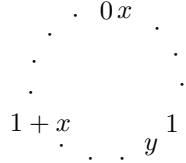

\noindent
Consider what values $y$ could take. It can not be $2$ since that would
produce the pattern in $\bfp \oplus 1$ in $\pi \ominus 1$. It can not be $3$
since that would mean $2$ was between $x$ and $1$ in $\pi$ and $\pi \ominus 2$
would contain $\bfp \oplus 1$. Similarly for any $k < x$ we can show $y \neq k$. Therefore
we must have $y > 1+x$. If $y = 2+x$ then $1, 2+x, 1+x$ in $\pi$ give the pattern
$\bfp \oplus 1$. The letter $2+x$ must therefore be between $1+x$ and $0$ in
$\pi^\circ$ above. We can now see that $y$ can not be $3+x$ either. By induction
we can show that $y$ can not be any letter larger than $1+x$. This gives us a
contradiction so we must have $y = 1+x$.
\end{proof}

\begin{lemma} \label{lem:z+x}
If $\pi$ is in the set $\eqAn{\biv{\nivs{1\bl2}\bl\nivrs{3}}{\vinbs{2\bl1\bl3}}}$ and
$\pi = x \dotsm zy \dotsm$ then $y = z+x \bmod{(n+1)}$.
\end{lemma}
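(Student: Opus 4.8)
The plan is to deduce Lemma~\ref{lem:z+x} from its special case $z=1$, which is exactly Lemma~\ref{lem:1+x}, by means of a single value-shift together with an induction. Throughout write $\bfp=(213,\tom,\{1,3\})$ for the pattern in question (rank $3$, with $3$ in its vertical set), and recall that $\eqAn{\bfp}$ is a union of whole toric classes: if $\pi\in\eqAn{\bfp}$ then every permutation in the toric class of $\pi$ avoids $\bfp$, and since $\pi\ominus m$ lies in that same class, also $\pi\ominus m\in\eqAn{\bfp}$ for every $m$. Hence I may apply Lemma~\ref{lem:1+x} not only to $\pi$ but to each shift $\pi\ominus m$.

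It is cleanest to argue with the circular successor function. For $\rho\in\eqAn{\bfp}$ let $s_\rho(v)$ be the letter following the value $v$ on the circular permutation $\rho^\circ=0\rho$; the first letter of $\rho$ is then $s_\rho(0)$, so Lemma~\ref{lem:1+x} says $s_\rho(1)=1+s_\rho(0)\bmod(n+1)$ whenever the value $1$ of $\rho$ has a (linear) successor. Apply this to $\rho=\pi\ominus(z-1)$. Subtracting $z-1$ from every value preserves circular adjacency and sends the value $v$ of $\rho$ to the value $v+(z-1)$ of $\pi$, so $s_\rho(v)=s_\pi(v+(z-1))-(z-1)\bmod(n+1)$. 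Substituting $v=1$ and $v=0$ into the identity above, the shift $-(z-1)$ cancels and leaves the recursion
\[
s_\pi(z)=1+s_\pi(z-1)\bmod(n+1),
\]
valid for every $z$ whose value is not the last letter of $\pi$.

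To finish, let $L\in\{1,\dotsc,n\}$ be the value of the last letter of $\pi$, so the recursion holds for all $z\neq L$. Telescoping from the base $s_\pi(0)=x$ upward gives $s_\pi(w)=x+w$ for $0\le w\le L-1$, while telescoping the remaining steps gives $s_\pi(w)=s_\pi(L)+(w-L)$ for $L\le w\le n$. Since $s_\pi$ is a bijection of $\{0,\dotsc,n\}$ (it is the single cycle $0\to x\to\dotsm\to 0$), the two blocks of consecutive values it produces, namely $\{x,\dotsc,x+L-1\}$ and $\{s_\pi(L),\dotsc,s_\pi(L)+n-L\}$, must be complementary arcs; this forces $s_\pi(L)=x+L$ and hence $s_\pi(w)=w+x\bmod(n+1)$ for all $w$. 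In particular, for the consecutive pair $zy$ of $\pi=x\dotsm zy\dotsm$ we have $y=s_\pi(z)=z+x\bmod(n+1)$.

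The step I expect to require the most care is precisely the treatment of the last letter $L$, where the recursion is unavailable because value $L$ has no linear successor (equivalently, $s_\pi(L)$ wraps to $0$). The complementary-arc argument above pins down $s_\pi(L)=x+L$ without extending Lemma~\ref{lem:1+x} to this boundary case; note it simultaneously recovers $L=n+1-x$, matching the last letter of the natural permutation with increment $x$. An alternative, should the bijection bookkeeping prove delicate, is to establish the circular form of Lemma~\ref{lem:1+x} directly (the wrap case corresponds to $s_\rho(0)=n$, i.e.\ $\rho=\id^\rmr$), but the arc argument is self-contained and makes transparent why the increment of $\pi$ is forced to be the constant $x$.
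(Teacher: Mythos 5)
Your overall route---a single value-shift $\ominus(z-1)$ feeding Lemma~\ref{lem:1+x}, followed by telescoping---is essentially a one-shot repackaging of the paper's proof, which runs the same shift one step at a time as an induction on $z$. But your validity condition for the recursion has a genuine gap. Applying Lemma~\ref{lem:1+x} to $\rho=\pi\ominus(z-1)$ requires that the value $1$ of $\rho$ have a linear successor, i.e.\ that $1$ not be the \emph{last} letter of $\rho$. The linearization cut of $\rho$ sits at its value $0$, which is the value $z-1$ of $\pi$; so the condition says that $z-1$ does not immediately follow $z$ in the circular permutation $\pi^\circ$. This is \emph{not} the condition \lqq $z$ is not the last letter of $\pi$\rqq; the two agree only for $z=1$, since value shifts move the cut. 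Indeed, at $z=L$ with $L\neq 1$ the lemma \emph{does} apply (the circular successor of $L$ in $\pi^\circ$ is $0$, which becomes the nonzero value $n+2-L$ in $\rho$), and it yields $s_\pi(L)=1+s_\pi(L-1)\bmod(n+1)$---so your exclusion of $z=L$ is merely unnecessary, and the arc argument could be dispensed with. The harmful direction is the other one: if $\pi$ contains a consecutive descent-by-one pair $z(z-1)$ with $z\neq L$, then $1$ \emph{is} the last letter of $\pi\ominus(z-1)$, the recursion at that $z$ is underivable from Lemma~\ref{lem:1+x}, and your telescoping over $\{1,\dotsc,n\}\setminus\{L\}$ silently crosses an unjustified step. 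Nothing you have proved rules such pairs out; the only member of the set containing them is $\id^\rmr$, but you may not invoke Theorem~\ref{thm:totient} to know that.

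The gap is repairable inside your own framework, which is why I would call it a missing step rather than a wrong approach. Call $w$ a failure point if $s_\pi(w)=w-1\bmod(n+1)$ (exactly the condition under which Lemma~\ref{lem:1+x} is inapplicable at $w$). If $w_0$ is a failure point, then at $w_0+1$ either the recursion is inapplicable, which by definition means $s_\pi(w_0+1)=(w_0+1)-1$, or it applies and gives $s_\pi(w_0+1)=1+(w_0-1)=(w_0+1)-1$, contradicting applicability; either way failure propagates upward, so $s_\pi(w)=w-1$ for all $w\geq w_0$, and your bijectivity count on the two arcs then forces $x=n$ and $\pi=\id^\rmr$, where the conclusion $y=z+x$ holds trivially since $z+n\equiv z-1\bmod(n+1)$. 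Alternatively, prove the unproved remark opening the paper's proof of Lemma~\ref{lem:1+x} (a member of the set beginning with $n$ equals $\id^\rmr$) and observe that a pair $z(z-1)$ in $\pi$ makes $\pi\ominus z$ begin with $n$. Either patch is short, but as written the proof is incomplete at exactly the boundary you flagged as the delicate step---and, for what it is worth, the paper's own induction glosses over the analogous boundary cases (e.g.\ $y=1$) as well.
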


\begin{proof}
We do this by induction on $z$. This is known for $z= 1$ from Lemma
\ref{lem:1+x}. Assume it is true for $z-1$ and consider
$\pi = x \dotsm zy \dotsm$. Then $\pi \ominus 1 = \dotsm n (x-1) \dotsm (z-1)(y-1) \dotsm$.
This permutation must then start with $y-z$. So in $\pi$ we have
$\dotsm 1 (y-z+1) \dotsm$. But $\pi$ starts with $x$ so we get
$y-z+1 = 1+x$ or $y = z+x$ as we wanted.
\end{proof}


\begin{theorem} \label{thm:divisors}
Let $\bfp = \biv{\nivs{1\bl2}\bl\nivbs{3}}{\vinbs{2\bl1\bl3}}$.
The set $\eqAn{\bfp}$ consists of the divisor permutations
in $\symS_n$. In particular
\[
|\eqAn{\bfp}| = d(n),
\]
where $d(n)$ counts the number of divisors in $n$.
\end{theorem}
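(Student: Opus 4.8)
The plan is to reduce to the preceding Theorem \ref{thm:totient} and then to a single divisibility computation. Writing the two patterns in the $(p,X,Y)$ notation, the divisor pattern is $\bfp=(213,\emptyset,\{1\})$ while the totient pattern of Theorem \ref{thm:totient} is $\bfq=(213,\emptyset,\{1,3\})$; the only difference is that $\bfq$ additionally forces the role of $3$ to be played by the largest letter $n$. Hence every occurrence of $\bfq$ is in particular an occurrence of $\bfp$, so a permutation avoiding $\bfp$ automatically avoids $\bfq$, and this passes to whole toric classes, giving $\eqAn{\bfp}\subseteq\eqAn{\bfq}$. By Theorem \ref{thm:totient} the right-hand side is exactly the set of natural permutations, so every member of $\eqAn{\bfp}$ is some $\nu_{k,n}$ with $k$ coprime to $n+1$.

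First I would apply Lemma \ref{lem:toricclasssize1}: a natural permutation has a toric class of size $1$, hence is torically equivalent only to itself. Therefore, for a natural permutation, lying in $\eqAn{\bfp}$ is the same as avoiding $\bfp$ as an ordinary permutation, and the theorem reduces to the combinatorial assertion that $\nu_{k,n}$ avoids $\bfp$ if and only if $k\mid n$. Since by Definition \ref{toricdef:naturaldivisor} the divisor permutations are precisely the $\nu_{k,n}$ with $k\mid n$, and since each divisor of $n$ is automatically coprime to $n+1$, this identifies $\eqAn{\bfp}$ with the $d(n)$ divisor permutations.

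To analyse avoidance I would use the one-line description $\nu_{k,n}(\ell)=\ell j\bmod(n+1)$, where $j$ is the increment fixed by $jk\equiv 1\pmod{n+1}$, together with the dual fact that the value $v$ sits at position $vk\bmod(n+1)$; in particular $n$ sits at position $n+1-k$. An occurrence of $\bfp$ is a pair of consecutive values $a,a+1$ with $a+1$ to the left of $a$, followed by some strictly larger value to the right of $a$. A direct computation with these formulas shows that the pairs with $a+1$ left of $a$ are exactly those whose smaller entry lands in one of the final $k-1$ positions, and that the letters occupying the last $k$ positions are $n,\,j-1,\,2j-1,\dots,(k-1)j-1$ reduced modulo $n+1$. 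Consequently $\nu_{k,n}$ avoids $\bfp$ if and only if these trailing letters are strictly decreasing after the leading $n$; equivalently, setting $g_t=tj\bmod(n+1)$, if and only if $g_1>g_2>\dots>g_{k-1}$.

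The forward implication is then transparent from Lemma \ref{lem:divpermssubseq}: for $k\mid n$ the permutation $\delta_{k|n}$ splits into $k$ increasing runs of consecutive integers, whose endpoints $n/k,2n/k,\dots,(k-1)n/k$ are the only inversion values and occupy the tail in decreasing order, so no inversion is ever followed by a larger letter. The substance of the proof is the converse, which I would obtain uniformly by counting wrap-arounds of the orbit $0=g_0,g_1,\dots,g_k=1$: the strictly-decreasing condition says every interior step wraps, so if $W=(kj-1)/(n+1)$ is the total number of wraps then avoidance is equivalent to $W=k-1$, that is to $kj=(k-1)(n+1)+1$. Reducing this relation modulo $k$ collapses it to $n\equiv 0\pmod{k}$, so an integral increment $j$ solving it exists exactly when $k\mid n$ (and then $j$ is the increment of $\delta_{k|n}$). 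I expect this wrap-counting step to be the main obstacle, since it is the point at which the divisibility $k\mid n$ must be squeezed out of the avoidance condition; everything else is bookkeeping with $\nu_{k,n}(\ell)=\ell j\bmod(n+1)$.
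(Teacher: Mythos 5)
Your proof is correct, and it shares the paper's skeleton while executing the decisive step quite differently. Like the paper, you first observe that every occurrence of the totient pattern $(213,\emptyset,\{1,3\})$ is an occurrence of $(213,\emptyset,\{1\})$, so Theorem \ref{thm:totient} confines $\eqAn{\bfp}$ to the natural permutations, and you invoke Lemma \ref{lem:toricclasssize1} to reduce toric-class avoidance to plain avoidance (the paper uses the same two ingredients, the first only implicitly). Where you diverge is in deciding which $\nu_{k,n}$ avoid $\bfp$. The paper argues each direction qualitatively from the run structure of Lemma \ref{lem:divpermssubseq}: inverted consecutive-value pairs in $\delta_{k|n}$ occur only at run boundaries and cannot be completed by a larger letter, while if $k\nmid n$ the first run $1,\dots,\ell$ fails to end at position $n$, so $\ell+1$ sits left of $\ell$ and the last letter exceeds $\ell+1$, exhibiting one forbidden occurrence. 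You instead prove an exact criterion: using $\nu_{k,n}(\ell)=\ell j\bmod(n+1)$ you locate all inverted pairs in the final $k-1$ positions, identify the tail letters as $tj-1\bmod(n+1)$, show avoidance is equivalent to $g_1>g_2>\dots>g_{k-1}$ with $g_t=tj\bmod(n+1)$, and then convert this via wrap-counting into the identity $kj=(k-1)(n+1)+1$, which reduces mod $k$ to $k\mid n$. I checked the delicate points — the first step $0\to j$ never wraps, the last step to $g_k=1$ always wraps (for $k\geq 2$), and $g_s=g_t+1$ is impossible for $0<s-t<k$ since it would force $s-t\equiv k\pmod{n+1}$ — and they all hold. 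The paper's converse is shorter since it only needs one occurrence when $k\nmid n$; your version costs more bookkeeping but buys a complete structural description of where inversions sit in an arbitrary natural permutation and an if-and-only-if arithmetic characterization of avoidance, from which both directions of the theorem fall out uniformly (and which, as you note, recovers the increment $j=n+1-n/k$ of Lemma \ref{lem:incr} as a byproduct).
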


\begin{proof}
If a divisor permutation contains the pattern $\biv{\niv{1\bl2}}{\vinb{2\bl1}}$ then
the letter corresponding to $1$ must be at the end of a subsequence and the letter
corresponding to $2$ at the start of the next subsequence (see Lemma \ref{lem:divpermssubseq}).
But everything after the end of a subsequence is smaller than the elements in
the subsequence so the pattern can not be completed to $\biv{\niv{1\bl2}\bl3}{\vinb{2\bl1\bl3}}$.
Since a divisor permutation is alone in its toric class we are done.

Now suppose we have a permutation that is in the set on the left. Then it must be
a natural permutation, $\nu_{k,n}$, by Theorem \ref{thm:totient}. To finish the
proof we need to show that $k|n$. Suppose not. Then the first subsequence in $\nu_{k,n}$
consisting of $1,2,3,\dotsc \ell$ does not terminate at the last position of the permutation,
meaning that something larger than $\ell$ appears there. This clearly gives us an
occurrence of the pattern, which is a contradiction.
\end{proof}

Recall that the location of $1$ in $\nu_{k,n}$ and $\delta_{k|n}$
is $k$, or equivalently, the first letter of inverse of these permutations is $k$. 
This property combined with Theorem \ref{thm:divisors}
gives us a way to write the \emph{sum-of-divisors} function $\sigma(n)$ as
\[
\sigma(n) =
\sum_{\delta \in \eqAn{\biv{\nivs{1\bl2}\bl\nivbs{3}}{\vinbs{2\bl1\bl3}}}}
(\textrm{location of $1$ in $\delta$}).
\]

Now consider the following theorem due to \cite{MR774171}.

\begin{theorem}[Robin's theorem]
Let $\sigma(n)$ denote the sum of the divisors of $n$. The Riemann Hypothesis is true
if and only if
\[
\sigma(n) < e^\gamma \log \log n,
\]
holds for all $n \geq 5041$. Here $\gamma$ is the Euler-Mascheroni constant.
\end{theorem}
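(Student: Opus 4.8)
The plan is to follow Robin's original strategy, which rests throughout on reducing the inequality to a very thin set of extremal integers. (The displayed inequality as printed omits the customary factor $n$; I take the intended statement to be $\sigma(n)<e^\gamma n\log\log n$.) The first step is Gronwall's theorem, $\limsup_{n}\sigma(n)/(n\log\log n)=e^\gamma$, which identifies $e^\gamma$ as the only constant that could possibly work and shows that the record values of $\sigma(n)/n$ occur at the \emph{superabundant} numbers (those $N$ maximising $\sigma(N)/N$ among integers $\le N$) and, more usefully, at the \emph{colossally abundant} numbers (those maximising $\sigma(N)/N^{1+\eps}$ for some $\eps>0$). Such $N$ factor as $N=\prod_{p\le y}p^{a_p}$ with exponents non-increasing in $p$, so that $\sigma(N)/N=\prod_{p\le y}(1-p^{-(a_p+1)})/(1-p^{-1})$ is, up to a bounded factor, the Mertens product $\prod_{p\le y}(1-1/p)^{-1}$. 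The entire problem is thereby transferred to sharp asymptotics for this product, hence to the distribution of primes up to $y$, via the Chebyshev function $\theta(x)=\sum_{p\le x}\log p$.

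For the forward direction I would assume the Riemann Hypothesis and establish the inequality for all large $n$, then dispose of the remaining cases by direct computation. The key input is that RH sharpens Mertens' third theorem: the explicit formula gives $\theta(x)=x+O(\sqrt{x}\log^2 x)$ under RH, and this error propagates into an estimate of the shape $\sigma(N)/N \le e^\gamma\log\log N + O(\log\log N/\sqrt{\log N})$, uniform over the extremal $N$. Since the error term is eventually negative past an explicit threshold, Robin's inequality follows for all $n$ beyond that threshold; one then checks by hand the finite list of superabundant exceptions, which terminates at $5040$, so that the inequality holds for every $n\ge 5041$.

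For the reverse direction I would argue contrapositively. If RH fails there is a zero $\rho=\beta+i\gamma_0$ with $\beta>1/2$, and a Landau-type oscillation theorem applied to the explicit formula forces $\theta(x)-x$ to be as large as $x^{\beta-\eps}$ in absolute value, infinitely often and with both signs. Selecting colossally abundant $N$ whose scale $y$ is matched to the points where this error is large and positive converts the oscillation into genuine violations $\sigma(N)/N > e^\gamma\log\log N$ for infinitely many $N$, contradicting the assumed inequality and thus forcing RH.

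The hard part is exactly this reverse implication: turning the mere existence of an off-line zero into an effective $\Omega$-result for $\sigma(n)/n$. It demands the explicit formula together with a quantitative oscillation estimate, and the delicate synchronisation of the scale $y$ of the colossally abundant number with the location of the large positive excursion of $\theta(x)-x$. By contrast, the forward direction is analytically routine once the reduction to extremal numbers and the RH-conditional Mertens estimate are in place; the genuine depth of the theorem — the reason it is as hard as RH itself — lives entirely in the reverse direction's oscillation argument.
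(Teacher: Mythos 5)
The paper does not actually prove this theorem: it is quoted (with a typo) from Robin's 1984 paper \cite{MR774171} and used as a black box to formulate Conjecture \ref{conj:eqRH}, so there is no internal proof to compare against. Your outline is a faithful summary of Robin's own argument — Gronwall's theorem and the reduction to superabundant/colossally abundant numbers, the RH-conditional bound $\theta(x)=x+O(\sqrt{x}\log^2 x)$ plus a finite computation for the forward direction, and a Landau-type oscillation theorem synchronized with colossally abundant scales for the converse — and you are also right that the displayed inequality should read $\sigma(n)<e^{\gamma}\, n \log\log n$.
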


\noindent
This allows us to state the Riemann Hypothesis in terms of pattern avoidance:
\begin{conjecture}[Equivalent to RH]
\label{conj:eqRH}
The inequality
\[
\sum_{\delta \in \eqAn{\biv{\nivs{1\bl2}\bl\nivbs{3}}{\vinbs{2\bl1\bl3}}}}
(\textrm{location of $1$ in $\delta$}) < e^\gamma \log \log n,
\]
holds for all $n \geq 5041$.
\end{conjecture}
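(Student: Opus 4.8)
The plan is not to prove the displayed inequality directly — doing so would amount to a proof of the Riemann Hypothesis — but instead to justify the bracketed assertion that the conjecture is \emph{equivalent} to RH. The whole argument rests on recognizing the left-hand sum as a familiar arithmetic function to which Robin's criterion applies, after which everything is formal.

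First I would recall from Theorem \ref{thm:divisors} that, with $\bfp = \biv{\nivs{1\bl2}\bl\nivbs{3}}{\vinbs{2\bl1\bl3}}$, the set $\eqAn{\bfp}$ is exactly the collection of divisor permutations $\delta_{k|n}$ in $\symS_n$, one for each $k \mid n$. Next I would invoke the location property recorded just above the conjecture: the letter $1$ sits at position $k$ in $\delta_{k|n}$ (equivalently, the first letter of the inverse is $k$). Summing the location of $1$ over the whole set therefore gives
\[
\sum_{\delta \in \eqAn{\bfp}} (\textrm{location of $1$ in $\delta$})
= \sum_{k \mid n} k = \sigma(n),
\]
which is precisely the sum-of-divisors identity already displayed in the excerpt. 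This identification is the crux of the translation; once it is in hand, the remainder is bookkeeping.

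With the left-hand side pinned down as $\sigma(n)$, the inequality in the conjecture reads literally $\sigma(n) < e^\gamma \log\log n$, which is the inequality appearing in Robin's theorem. By that theorem the latter holds for all $n \geq 5041$ if and only if the Riemann Hypothesis is true. Chaining the two statements, the conjecture holds for all $n \geq 5041$ exactly when RH holds, establishing the advertised equivalence.

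The main point is that there is no mathematical obstacle in the \emph{derivation} itself: passing from the pattern-avoidance sum to $\sigma(n)$ is a direct application of Theorem \ref{thm:divisors} together with the location property, and substituting into Robin's theorem is immediate. The genuine difficulty is that the resulting inequality \emph{is} Robin's inequality, whose validity for all $n \geq 5041$ is the Riemann Hypothesis. For this reason the statement can only be offered as a conjecture; no unconditional proof is available short of resolving RH.
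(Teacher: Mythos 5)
Your derivation is correct and follows exactly the route the paper takes: you identify the left-hand sum as $\sigma(n)$ via Theorem \ref{thm:divisors} together with the fact that $1$ sits at position $k$ in $\delta_{k|n}$, and then invoke Robin's theorem to translate the inequality into the Riemann Hypothesis. You also rightly observe that only the equivalence, not the inequality itself, admits a proof, which matches the paper's framing of the statement as a conjecture.
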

The largest known $n$ for which the inequality in Robin's theorem is violated is $5040$,
so it \emph{suffices} to start exploring in $\symS_{5041}$. I should mention that permutations
have been shown before to have connections with the Riemann Hypothesis, for example using the
Redheffer matrix in \cite{MR2120104}, probabilistic methods in \cite{MR1694204} (see also \cite{Stopple}),
and group theory in \cite{MR960551}.


\subsubsection*{Natural permutations in $\symS_1$ to $\symS_{10}$}

In $\symS_1$ we have
\begin{align*}
\nu_{1,1} &= 1 = \delta_{1|1}
\end{align*}

In $\symS_2$ we have
\begin{align*}
\nu_{1,2} &= 12 = \delta_{1|2} \\
\nu_{2,2} &= 21 = \delta_{2|2}
\end{align*}

In $\symS_3$ we have
\begin{align*}
\nu_{1,3} &= 123 = \delta_{1|3} \\
\nu_{3,3} &= 321 = \delta_{3|3}
\end{align*}

In $\symS_4$ we have
\begin{align*}
\nu_{1,4} &= 1234 = \delta_{1|4} \\
\nu_{2,4} &= 3142 = \delta_{2|4} \\
\nu_{3,4} &= 2413 \\
\nu_{4,4} &= 4321 = \delta_{4|4}
\end{align*}

In $\symS_5$ we have
\begin{align*}
\nu_{1,5} &= 12345 = \delta_{1|5} \\
\nu_{5,5} &= 54321 = \delta_{5|5}
\end{align*}

In $\symS_6$ we have
\begin{align*}
\nu_{1,6} &= 123456 = \delta_{1|6} \\
\nu_{2,6} &= 415263 = \delta_{2|6} \\
\nu_{3,6} &= 531642 = \delta_{3|6} \\
\nu_{4,6} &= 246135 \\
\nu_{5,6} &= 362514 \\
\nu_{6,6} &= 654321 = \delta_{6|6}
\end{align*}

In $\symS_7$ we have
\begin{align*}
\nu_{1,7} &= 1234567 = \delta_{1|7} \\
\nu_{3,7} &= 3614725 \\
\nu_{5,7} &= 5274163 \\
\nu_{7,7} &= 7654321 = \delta_{7|7}
\end{align*}

In $\symS_8$ we have
\begin{align*}
\nu_{1,8} &= 12345678 = \delta_{1|8} \\
\nu_{2,8} &= 51627384 = \delta_{2|8} \\
\nu_{4,8} &= 75318642 = \delta_{4|8} \\
\nu_{5,8} &= 24681357 \\
\nu_{7,8} &= 48372615 \\
\nu_{8,8} &= 87654321 = \delta_{8|8}
\end{align*}

In $\symS_9$ we have
\begin{align*}
\nu_{1,9} &= 123456789 = \delta_{1|9} \\
\nu_{3,9} &= 741852963 = \delta_{3|9} \\
\nu_{7,9} &= 369258147 \\
\nu_{9,9} &= 987654321 = \delta_{9|9}
\end{align*}

In $\symS_{10}$ we have
\begin{align*}
\nu_{1,10} &= 123456789(10) = \delta_{1|10} \\
\nu_{2,10} &= 61728394(10)5 = \delta_{2|10} \\
\nu_{3,10} &= 4815926(10)37 \\
\nu_{4,10} &= 369147(10)258 \\
\nu_{5,10} &= 97531(10)8642 = \delta_{5|10} \\
\nu_{6,10} &= 2468(10)13579 \\
\nu_{7,10} &= 852(10)741963 \\
\nu_{8,10} &= 73(10)6295184 \\
\nu_{9,10} &= 5(10)49382716 \\
\nu_{10,10} &= (10)987654321 = \delta_{10|10}
\end{align*}

%

\section{Other equivalence relations} \label{sec:other}

I have looked at many other equivalence relations besides the one we study above.
Below are just two conjectures concerning two of them.

\begin{conjecture}
Regard two permutations as equivalent if their descents appear at the same positions.
The pattern $\bfp = 321$ gives
\begin{align*}
\eqAn{\bfp} &= \text{ Grassmannian permutations} \\
|\eqAn{\bfp}| &= 1, 2, 5, 12, 27, 58, 121, 248, 503, \dotsc, \qquad n = 1,2,3,\dotsc \\
 &= 2^n-n.
\end{align*}
This is A000325.
\end{conjecture}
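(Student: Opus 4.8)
The plan is to identify the equivalence classes with descent sets and then pin down exactly which descent sets force every member of a class to avoid $321$. Write $D(\pi)\subseteq\{1,\dotsc,n-1\}$ for the set of descent positions of $\pi$, that is, the indices $i$ with $\pi_i>\pi_{i+1}$. Two permutations are equivalent here precisely when they share the same $D(\pi)$, so a class is indexed by a subset $S\subseteq\{1,\dotsc,n-1\}$, and the members of the class are exactly the permutations that are increasing on each of the runs cut out by $S$, with a genuine descent at every cut. Since a permutation is \emph{Grassmannian} exactly when $|D(\pi)|\le 1$, the assertion $\eqAn{321}=\{\text{Grassmannian permutations}\}$ is equivalent to the statement that the whole class of $\pi$ avoids $321$ if and only if $\pi$ has at most one descent.

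First I would dispatch the easy inclusion. If $|D(\pi)|\le 1$, then $\pi$, and likewise every permutation with the same descent set, is a concatenation of at most two increasing runs; any decreasing subsequence uses at most one letter from each run and so has length at most $2$. Hence no permutation in such a class contains $321$, and every Grassmannian permutation lies in $\eqAn{321}$.

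The substance is the reverse inclusion: if $|S|\ge 2$ I must exhibit a permutation with descent set \emph{exactly} $S$ that contains $321$. Writing $S=\{d_1<\dotsb<d_k\}$ with $k\ge 2$, the positions split into runs $R_0,\dotsc,R_k$ of lengths $\ell_0,\dotsc,\ell_k$. The construction I would use assigns consecutive blocks of values to the runs in reverse order: the last run $R_k$ receives $\{1,\dotsc,\ell_k\}$, the previous run the next block up, and so on, with $R_0$ receiving the top $\ell_0$ values, each run then being sorted increasingly. Because every value in $R_{i-1}$ exceeds every value in $R_i$, each boundary is a genuine descent and there are no descents inside runs, so the descent set is exactly $S$. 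Moreover, as $k\ge 2$ there are at least three runs, and the first letters of $R_0,R_1,R_2$ sit at the increasing positions $1<d_1+1<d_2+1$ with strictly decreasing values $n-\ell_0+1>n-\ell_0-\ell_1+1>n-\ell_0-\ell_1-\ell_2+1$, giving an explicit occurrence of $321$. Thus any $\pi$ with $|D(\pi)|\ge 2$ has a class-mate containing $321$ and is excluded from $\eqAn{321}$.

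Finally I would count. There is exactly one permutation with no descent, namely $\id$, and the number with exactly one descent is the Eulerian number $2^n-n-1$; equivalently, for each cut position $d\in\{1,\dotsc,n-1\}$ there are $\binom{n}{d}$ permutations with descent set $\subseteq\{d\}$, of which one is increasing, so summing $\binom{n}{d}-1$ over $d$ gives $(2^n-2)-(n-1)=2^n-n-1$. Adding the identity yields $1+(2^n-n-1)=2^n-n$, matching A000325. The only genuinely delicate step is the reverse inclusion, and there the one point to verify carefully is that the reverse-block permutation realises $S$ \emph{exactly} — all boundary descents present and no spurious internal descent — which the block construction makes transparent.
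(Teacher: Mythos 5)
Your argument is correct, and it is worth pointing out that the paper offers no proof at all here: this statement appears only as a conjecture in the final section, so your write-up does not merely reprove a known result — it settles the conjecture. Both directions check out. The easy inclusion is right: a class is indexed by a descent set $S$, any member with $|S|\le 1$ is a concatenation of at most two increasing runs, and a decreasing subsequence can take at most one letter from each run, so $321$ cannot occur anywhere in the class. For the reverse inclusion, your reverse-block witness (assigning the top $\ell_0$ values to the first run, the next block to the second run, and so on, each sorted increasingly) does realise the descent set \emph{exactly} — all values of $R_{i-1}$ exceed all values of $R_i$, forcing a descent at every cut and none inside a run — and this exactness matters, since the paper's relation requires descents \lqq at the same positions\rqq, not merely contained in $S$; you handle this correctly. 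With $k\ge 2$ descents the first letters of $R_0,R_1,R_2$ give the required $321$. The count is also right: there are $\binom{n}{d}$ permutations with descent set contained in $\{d\}$, hence $\binom{n}{d}-1$ with descent set exactly $\{d\}$, and summing over $d$ and adding the identity gives $1+\bigl((2^n-2)-(n-1)\bigr)=2^n-n$, matching A000325 and the displayed data. In short, your proposal is complete and would let the author promote this conjecture to a proposition.
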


Recall that a \emph{Grassmannian permutation} has at most one descent

\begin{conjecture}
Regard two permutations as equivalent if they give the same \emph{volume partition},
see \cite{Y10}.
The pattern $\bfp = \biv{\nivls{1\bl2}\bl\vinbs{3}}{\vinbs{1\bl2\bl3}}$ gives
\begin{align*}
|\eqAn{\bfp}| &= 1,2,5,18,84,480,3240,25200, \dotsc, \qquad n = 1,2,3,\dotsc 
\end{align*}
which is (from $n = 2$)
A038720: Next-to-last diagonal of A038719, which counts chains
in a specific poset.
\end{conjecture}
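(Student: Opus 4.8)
The plan is to run the same three-stage shape of argument that settled Theorems~\ref{thm:totient} and~\ref{thm:divisors}: decode the pattern, pin down single-permutation avoidance, and then promote this to the whole-class statement that defines $\eqAn{\bfp}$. Reading off the diagram gives $\bfp = (123,\tom,\{0,1\})$, so an occurrence of $\bfp$ in $\pi$ is a triple of positions $i_1 < i_2 < i_3$ with $\pi_{i_1} = 1$, $\pi_{i_2} = 2$ and $\pi_{i_3} > 2$; in words, the global minimum, followed later by the value $2$, followed later by any larger value. I would first record as a lemma the resulting description of $\symA_n(\bfp)$: a single permutation avoids $\bfp$ exactly when either $2$ precedes $1$, or $1$ precedes $2$ and $2$ occupies the last position. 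The whole argument rests on this elementary case split, so it is worth isolating before anything else.

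The second stage is to import the definition of the volume partition from~\cite{Y10} and to describe the classes of the relation explicitly, or at least well enough to test avoidance on one representative. This is where I expect the real difficulty to lie. The general reductions in the paper are tailored to the four relations already treated, so I would need a reduction lemma specific to volume partitions: a way to certify $\bfp$-avoidance of an entire class from a single canonical member. This plays exactly the role that the $\oplus 1$ map and Theorem~\ref{thm:substable} played in the toric case, and without it the universally quantified condition ``every equivalent permutation avoids $\bfp$'' is unmanageable. Constructing that canonical representative, and proving that its avoidance is equivalent to avoidance by every member of its class, is the crux of the proof.

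With the reduction in hand, the final stage is a bijection from $\eqAn{\bfp}$ onto the objects enumerated by A038720. Since A038720 is the next-to-last diagonal of the chain-counting triangle A038719, I would read the nested block data of a volume partition as a flag of subsets and match each surviving class to a chain in the underlying poset; the two avoidance alternatives from the opening lemma should correspond to the two ways a maximal chain can be capped at its top, which is precisely the recursion producing the values $1,2,5,18,84,480,\dots$. Before committing to the general argument I would check the proposed correspondence against the listed terms---by hand for $n \le 4$ and by computer through $n = 8$---so that any error in the class description or in the pattern decoding surfaces immediately.
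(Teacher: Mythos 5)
Note first that the paper contains no proof of this statement: it is stated (and labeled) as a conjecture in Section~\ref{sec:other}, evidently checked numerically, so there is no argument of the paper's to match yours against. Your proposal, however, is also not a proof, and you say so yourself at the crucial point: stage two (``I would need a reduction lemma specific to volume partitions\dots Constructing that canonical representative\dots is the crux'') and stage three (``should correspond'') are placeholders, not arguments. The definition of the volume partition from \cite{Y10} is never actually engaged, no description of its equivalence classes is given, and the closure property that makes $\eqAn{\bfp}$ tractable is postulated rather than established. What you have actually proved is only your opening lemma: the decoding $\bfp = (123,\tom,\{0,1\})$ is correct, and so is the characterization that $\pi \in \symA_n(\bfp)$ iff $2$ precedes $1$, or $1$ precedes $2$ and $2$ is the last letter (for $n \ge 2$).

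That lemma is worth more than you extract from it, and exploiting it would let you discard your entire third stage. It gives immediately, for $n \ge 2$,
\[
|\symA_n(\bfp)| = \frac{n!}{2} + (n-1)! = \frac{(n-1)!\,(n+2)}{2},
\]
and these values are $2, 5, 18, 84, 480, 3240, 25200, \dotsc$ --- exactly the conjectured sequence. Since $\eqAn{\bfp} \subseteq \symA_n(\bfp)$ always holds, the conjecture is therefore \emph{equivalent} to the single closure statement $\eqAn{\bfp} = \symA_n(\bfp)$, i.e., that the avoider set described by your lemma is a union of volume-partition classes (the pattern is ``stable'' for this relation in the sense used for Knuth-equivalence in Corollaries~\ref{cor:knuth-first-stable} and~\ref{cor:knuth-second-stable}). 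Your proposed bijection to chains counted by A038719 is thus an unnecessary detour: no combinatorial model of A038720 is needed once the count of plain avoiders is matched. But the genuine gap remains exactly where you located it and then stepped around it: without the definition of volume partitions and a proof that equivalent permutations agree on the configuration (relative order of $1$ and $2$, and whether $2$ is final) that governs avoidance, nothing in the proposal establishes closure, and the statement remains the conjecture the paper says it is.
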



\bibliographystyle{abbrvnat}
\bibliography{RefsEq_avoiders}
\label{sec:biblio}

\end{document}